\newcommand{\M}{\mathbb{M}}
\DeclareMathOperator{\im}{Im}
\DeclareMathOperator{\re}{Re}
\renewcommand{\Re}{\re}
\renewcommand{\Im}{\im}
\newcommand{\Hy}{\mathbb{H}}
\renewcommand{\tilde}[1]{\widetilde{#1}}
\newcommand{\D}{\mathcal{D}}
\newcommand{\tec}{Teichm\"uller }
\newcommand{\wep}{Weil-Petersson }
\renewcommand{\leq}{\leqslant}
\renewcommand{\geq}{\geqslant}
\DeclareMathOperator{\Prob}{Prob}
\DeclareMathOperator{\Area}{Area}
\DeclareMathOperator{\Dim}{dim}
\DeclareMathOperator{\dist}{dist}
\DeclareMathOperator{\inj}{inj}
\DeclareMathOperator{\systole}{systole}
\DeclareMathOperator{\Aut}{Aut}
\DeclareMathOperator{\Teich}{Teich}
\DeclareMathOperator{\Te}{Teich}
\DeclareMathOperator{\Diff}{Diff}
\DeclareMathOperator{\Span}{Span}
\DeclareMathOperator{\Sca}{Sca^{WP}}
\newcommand{\Mod}{\mbox{\rm Mod}}
\newcommand{\param}{{\mathchoice{\mkern1mu\mbox{\raise2.2pt\hbox{$\centerdot$}}\mkern1mu}{\mkern1mu\mbox{\raise2.2pt\hbox{$\centerdot$}}\mkern1mu}{\mkern1.5mu\centerdot\mkern1.5mu}{\mkern1.5mu\centerdot\mkern1.5mu}}}
\numberwithin{equation}{section}
\theoremstyle{plain}
\newtheorem{theorem}{Theorem}[section]
\newtheorem{corollary}[theorem]{Corollary}
\newtheorem{lemma}[theorem]{Lemma}
\newtheorem{proposition}[theorem]{Proposition}
\newtheorem*{acknowledgement}{Acknowledgement}
\theoremstyle{definition}
\newtheorem{definition}[theorem]{Definition}
\newtheorem{question}{Question}
\newtheorem{remark}[theorem]{Remark}
\theoremstyle{definition}
\newtheorem*{remarksenv}{Remarks}
\newtheorem*{rem}{Remark}
\begin{document}



\title[Uniform Bounds]
{Uniform Bounds for Weil-Petersson Curvatures}

\author{Michael Wolf \& Yunhui Wu}

\address{Department of Mathematics\\
       Rice University\\
         Houston, Texas, 77005-1892\\}
\email{mwolf@rice.edu}

\address{Yau Mathematical Sciences Center, Tsinghua University, Haidian District, Beijing 100084, China
}
\email{yunhui\_wu@mail.tsinghua.edu.cn  \& yhwu@math.tsinghua.edu.cn}

\begin{abstract}
We find bounds for Weil-Petersson holomorphic sectional
curvature, and the Weil-Petersson curvature operator in several regimes, that do not depend on the topology of the underlying surface. Among other results, we show that the minimal Weil-Petersson holomorphic sectional curvature of a sufficiently thick hyperbolic surface is comparable to $-1$, independently of the genus. This provides a counterexample to some suggestions that the Weil-Petersson metric becomes asymptotically flat, as the genus $g$ goes to infinity, in the thick loci in \tec space. Adopting a different perspective on curvature, we also show that the minimal (most negative) eigenvalue of the curvature operator at any point in the \tec space $\Teich(S_g)$ of a closed surface $S_g$ of genus $g$ is uniformly bounded away from zero. Restricting to a thick part of $\Teich(S_g)$, we show that the minimal eigenvalue is uniformly bounded below by an explicit constant which does not depend on the topology of the surface but only on the given bound on injectivity radius. 
\end{abstract}

\subjclass{30F60, 53C21, 32G15}
\keywords{Teichm\"uller space, Weil-Petersson metric, Curvature operator, Holomorphic sectional curvature}

\maketitle

\section{Introduction}
Let $S_g$ be a closed surface of genus $g$ where $g>1$, and $\Teich(S_g)$ be the Teichm\"uller space of $S_g$. Endowed with the \wep metric, the \tec space $\Teich(S_g)$ is K\"ahler (\cite{Ahlfors}), incomplete (\cite{Chu, Wol75}), geodesically complete (\cite{Wol87}) and negatively curved (\cite{Tromba86, Wol86}). Tromba \cite{Tromba86} and Wolpert \cite{Wol86} found a formula for the \wep curvature tensor, which has been applied to study a variety of curvature properties of $\Teich(S_g)$ over the past several decades. (See also  \cite{Jost91, LSY04, Siu86} for alternative proofs of the curvature formula.)


In their papers, they deduced from their formula that the holomorphic sectional curvatures are bounded above by a negative number which only depends on the genus of the surface, confirming a conjecture of Royden.

We focus in this paper on bounds for \wep curvatures that are uniform across \tec spaces, in the sense that the bounds do not depend on the genus $g$ of the surface $S_g$.

For example, in the second part of the paper -- described in more detail in the fifth subsection of this introduction -- we consider holomorphic sectional curvatures.  We find, for hyperbolic surfaces $X_g$ of genus $g$ of sufficiently large injectivity radius, that we may exhibit holomorphic sections whose curvatures are uniformly bounded away from zero; here the bound does not depend on the genus of the surface. 

In particular, we prove

\begin{theorem}\label{ubfhsc-0}
There is a constant $\iota_0$ so that if $X \in \Teich(S_g)$ is sufficiently thick, i.e. $\inj(X) > \iota_0$, then there exists a holomorphic section $\mu_X $ of the tangent space $T_X\Teich(S_g)$ so that the \wep holomorphic sectional curvature $K(\mu_X)$ along $\mu_X$ satisfies 

\[ K(\mu_X) \leq C<0\]
where $C$ is an explicit constant. 
\end{theorem}

Our interest in this bound came from suggestions that, as the genus $g$ increased without bound, there should be some sense in which the \wep metric became flatter at some loci in the moduli space, perhaps where the injectivity radii were becoming large.  Our example provides a counterexample to some conjectures in this direction; we provide more context and details for these two results later in this introduction.

Naturally, there are also a number of other appealing notions of curvature of a complex Riemannian manifold beyond holomorphic sectional curvatures. In particular, while we focus on the holomorphic sectional curvatures in the second part of the paper, in the first part of this paper we study the \wep curvature {\it operator} 
$$\tilde{Q}: \wedge^{2}T_X\Teich(S_g) \to \wedge^{2}T_X\Teich(S_g),$$ an endomorphism of the $(3g-3)(6g-7)$-dimensional exterior wedge product 
space $\wedge^{2}T_X\Teich(S_g)$ of the tangent space $T_X\Teich(S_g)$ at $X \in \Teich(S_g)$. Of course, one popular (and important) way to interpret the ``curvature" of a manifold is through its sectional curvatures: while (if we pick a basis of $\wedge^{2}T_X\Teich(S_g)$ consisting of simple vectors)  these sectional curvatures arise as diagonal elements of the curvature operator $\tilde{Q}$, our interest in the first part of the paper is on the full operator $\tilde{Q}$, at least in terms of estimating its spectrum. Our perspective is that a focus on the curvature operator allows for an appreciation of aspects of the \wep curvature that could be missing from discussions restricted to specializations of it, for example by just a focus on \wep sectional curvatures.

Now, in \cite{Wu14-co}
one of us 
showed that the Riemannian curvature operator $\tilde{Q}$ of the Weil-Petersson metric is non-positive definite.
In the first part of this paper, we develop some results for the negative eigenvalues of $\tilde{Q}$, especially as the genus of the surfaces represented in the moduli space becomes arbitrarily large. 

As an example of the sort of estimates we obtain, in focusing on regions in \tec space where the injectivity radius and the genus of the surface grows increasingly large, two questions collide.  The first asks, as above, whether the \wep metric grows flatter somewhere in some sense.  The other, noting that the scalar curvature of the \wep metric is growing comparably to $-g$, asks whether the most negative eigenvalue of the curvature operator $\tilde{Q}$ grows without bound.  In {\bf Theorem~\ref{largei-0}} below, we combine and refine Theorems~\ref{uubf-0} and \ref{lbfme-0} to simultaneously give a strong negative answer to both questions. 

We survey and describe the context for these and other results on the asymptotics of the curvature operator $\tilde{Q}$ in the next four subsections.
After those subsections, we will turn our attention to the discussion surrounding Theorem~\ref{ubfhsc-0}, beginning in subsection~\ref{subsec:bddHolCurvs-Intro}.

\subsection{Uniform lower bounds on the norm of the curvature operator.} 
We begin with the question, commented on above, whether as the genus $g$ of $S_g$ grows large, do some regions in the \tec space $\Teich(S_g)$
become increasingly flat?  
We show that, from the point of view of the full curvature operator, the answer is negative: on any sequence of surfaces with genus tending to infinity, at least the most negative eigenvalue of $\tilde{Q}$ does not tend to zero. Let $\lambda_{\min}(X)$ denote the minimal eigenvalue of $\tilde{Q}$ at $X$. We prove  

\begin{theorem}\label{uubf-0}
For any $X\in \Teich(S_g)$ and let $\Sca(X)$ be the Weil-Petersson scalar curvature at $X$, then 
\[\lambda_{\min}(X)\leq \frac{-1}{2\pi}<0.\]
More precisely,
\[\lambda_{{\min}}(X)\leq \frac{2 \Sca(X)}{9(g-1)}.\]
\end{theorem}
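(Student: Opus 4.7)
The plan is to apply the Rayleigh quotient inequality
\[
\lambda_{\min}(X)\;\leq\;\frac{\langle \tilde{Q}(\omega),\omega\rangle}{\|\omega\|^{2}}
\]
to a test $2$-vector $\omega\in\wedge^{2}T_{X}\Teich(S_g)$ built from the K\"ahler structure of the \wep metric, then combine with Wolpert's explicit curvature formula. Fix a unitary basis $\{\mu_{\alpha}\}_{\alpha=1}^{3g-3}$ of harmonic Beltrami differentials at $X$, with induced real orthonormal basis $\{V_{\alpha},JV_{\alpha}\}$ of $T_{X}\Teich(S_g)$. A first natural candidate is the K\"ahler-form bivector $\omega_K:=\sum_\alpha V_\alpha\wedge JV_\alpha$, for which $\|\omega_K\|^{2}=3(g-1)$ and, by the first Bianchi identity together with the K\"ahler condition $R(JX,JY,Z,W)=R(X,Y,Z,W)$, one obtains the \emph{exact} identity $\langle\tilde{Q}(\omega_K),\omega_K\rangle=\tfrac{1}{2}Sca(X)$. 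This alone yields only the weaker bound $\lambda_{\min}(X)\leq Sca(X)/(6(g-1))$, shy of the stated estimate by a factor of $\tfrac{4}{3}$.

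To close this gap we perturb, taking $\omega=\omega_K+t\xi$ for a unit $\xi\perp\omega_K$ lying in the primitive $(1,1)$ component of $\wedge^{2}T_{X}\Teich(S_g)$, and optimizing the Rayleigh quotient in $t$. The critical value is
\[
f^{*}(\xi)\;=\;\frac{Sca(X)-\sqrt{Sca(X)^{2}+48\,a_{\xi}^{2}\,(g-1)}}{12(g-1)},\qquad a_\xi:=\langle\tilde{Q}(\omega_K),\xi\rangle,
\]
so reaching $\frac{2Sca(X)}{9(g-1)}$ reduces to exhibiting a $\xi$ with $|a_\xi|\geq\frac{|Sca(X)|}{3\sqrt{3(g-1)}}$. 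The existence of such a $\xi$ is established via Wolpert's explicit integral formula
\[
R(\mu_{i},\bar\mu_{j},\mu_{k},\bar\mu_{l})\;=\;-\int_{X} D(\mu_{i}\bar\mu_{j})(\mu_{k}\bar\mu_{l})\;-\;\int_{X}D(\mu_{i}\bar\mu_{l})(\mu_{k}\bar\mu_{j}),\qquad D=-2(\Delta-2)^{-1},
\]
together with positivity of the self-adjoint operator $D$, which forces the projection of $\tilde{Q}(\omega_K)$ onto the primitive $(1,1)$-subspace to have sufficient norm. The universal constant $\lambda_{\min}(X)\leq -\tfrac{1}{2\pi}$ is then extracted by invoking the Tromba--Wolpert upper bound $K_{\mathrm{hol}}(\mu)\leq -\tfrac{1}{2\pi(g-1)}$ and applying Wolpert's formula to derive $Sca(X)\leq -\tfrac{9(g-1)}{4\pi}$, substituted into the ``more precise'' inequality.

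\textbf{Main obstacle.} Since the identity $\langle\tilde{Q}(\omega_K),\omega_K\rangle=\tfrac{1}{2}Sca(X)$ is exact, the sharper factor $\tfrac{2}{9}$ cannot come from reorganizing the same bivector. The crux is to identify the right primitive $(1,1)$-perturbation $\xi$ and quantitatively estimate the cross-term $\langle\tilde{Q}(\omega_K),\xi\rangle$ from below via Wolpert's integral formula and positivity of the operator $D=-2(\Delta-2)^{-1}$.
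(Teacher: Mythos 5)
Your proposal correctly identifies the right test element --- the K\"ahler-form bivector, which is exactly the paper's choice $A_0=\frac{1}{\sqrt{3g-3}}\sum_i \frac{\partial}{\partial x_i}\wedge\frac{\partial}{\partial y_i}$ --- and the right closing step (a Wolpert-type upper bound on the scalar curvature). But the argument has a genuine gap, and one you yourself flag: the entire burden of the proof is shifted onto producing a primitive $(1,1)$ perturbation $\xi$ with $|\langle\tilde{Q}(\omega_K),\xi\rangle|\geq \frac{|Sca(X)|}{3\sqrt{3(g-1)}}$, and this is never established. You assert that Wolpert's integral formula ``together with positivity of $D$'' forces the projection of $\tilde{Q}(\omega_K)$ onto the primitive subspace to have sufficient norm, but no such estimate is derived, and positivity of $D$ by itself cannot yield a lower bound of the required strength; your own ``Main obstacle'' paragraph concedes that this is precisely the unresolved point. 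Without it, your argument only delivers $\lambda_{\min}(X)\leq \frac{Sca(X)}{6(g-1)}$, which (combined with the available scalar curvature bounds) gives neither $\frac{2\,Sca(X)}{9(g-1)}$ nor even the uniform bound $-\frac{1}{2\pi}$ once $g\geq 3$.

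The deeper issue is that the perturbation detour is motivated by an unverified ``exact identity'' $\langle\tilde{Q}(\omega_K),\omega_K\rangle=\tfrac12 Sca(X)$, whose constant depends on normalization conventions for the curvature operator that you never reconcile with the \wep curvature formula. The paper avoids this entirely: it feeds the single element $A_0$ into the explicit formula of Proposition~\ref{cof} (for which $F\equiv 0$ and $H(z,w)=\frac{1}{\sqrt{3g-3}}\sum_i\mu_i(w)\overline{\mu_i(z)}$), discards the manifestly non-positive second and third terms via Proposition~\ref{upfco}(1), applies the pointwise bound $D(|\mu|^2)\geq\frac13|\mu|^2$ of Lemma~\ref{lowolf}, and compares $\int_X(\sum_i|\mu_i|^2)^2\,dA$ with $-Sca(X)$ using Proposition~\ref{bfscalar}. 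This chain gives $Q(A_0,A_0)\leq\frac{2\,Sca(X)}{9(g-1)}$ directly, with no perturbation; in fact, evaluating Proposition~\ref{cof} exactly on $A_0$ (using $H(w,z)=\overline{H(z,w)}$) yields $Q(A_0,A_0)=\frac{4\,Sca(X)}{3(g-1)}$, already stronger than the target. So the factor-of-$\frac43$ deficit you are trying to recover is an artifact of your unproven identity, not a real obstruction. Finally, your derivation of $Sca(X)\leq-\frac{9(g-1)}{4\pi}$ from the holomorphic sectional curvature bound is also not justified as stated, since the scalar curvature traces over all two-planes rather than only holomorphic ones; the paper instead invokes Wolpert's bound $Sca(X)\leq-\frac{3(3g-2)}{4\pi}$ directly.
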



Because this eigenvalue is the minimum 
of values of $\tilde{Q}$ applied to (unit) elements of $\wedge^2T_{X}\Teich(S_g)$, it is enough to estimate $\tilde{Q}$ on a carefully chosen element of $\wedge^2T_{X}\Teich(S_g)$ which yields the bound.\newline

\subsection{Uniform upper bounds on the norm of the curvature operator on the thick locus.} 
It is often convenient to choose a basis for $\wedge^2T_{X}\Teich(S_g)$ comprised of simple two-vectors $e_i \wedge e_j$. Then since the minimal eignevalue of $\tilde{Q}$ is no more than the minimal value of a diagonal entry (note here that since $\tilde{Q}$ is non-positive definite, we are often comparing {\it negative} values), and with respect to this basis, the diagonal entries are the sectional curvatures, thus $\lambda_{\min}(X)$ is less than or equal to the sectional curvatures at $X$ along arbitrary planes. 

However, it is well-known that the sectional curvatures of $\Teich(S_g)$ along certain planes could be arbitrarily negative. One specific example is the direction along which a nontrivial simple closed curve of $S$ is pinching to zero. Then that sectional curvature, along the holomorphic plane defined by the pinching direction, tends to negative infinity as the curve goes to zero. More precisely, we let $\alpha$ be a nontrivial simple closed curve on $S$ and $\l_{\alpha}(X)$ be the length of the closed geodesic in $X$ representing $\alpha$. Consider the gradient $\lambda_\alpha:=grad (l_{\alpha}^{\frac{1}{2}})$. Wolpert in \cite{Wolpert5} proved that the magnitude of $\lambda_\alpha$ approaches $\frac{1}{\sqrt{4\pi}}$ as $l_\alpha$ goes to zero and the sectional curvatures along holomorphic planes spanned by $\lambda_\alpha$ behave as $\frac{-3}{\pi l_\alpha}+O(l_\alpha)$, which goes to negative infinity as $\alpha$ pinches to zero. One can also see a similar estimation by Huang in \cite{Huang1}. For more details, one can see Corollary 16 in \cite{Wolpert5}. Thus, in general we do not have any uniform lower bounds for $\lambda_{\min}(X)$ over $\Teich(S_g)$.

On the other hand, the Mumford compactness theorem \cite{Mumford71} implies that the thick part of the moduli space is compact. Since the mapping class group acts on $\Teich(S_g)$ by isometries, the Weil-Petersson curvature operator $\tilde{Q}$, restricted to the thick part of $\Teich(S_g)$, is bounded for each individual choice of $g$. In particular the minimal eigenvalue $\lambda_{\min}(X)$ has a lower bound when $X$ runs over the thick part of $\Teich(S_g)$. Our next result implies that the lower bound on the thick part may be taken to be uniform, independent of the topology of the surface.

\begin{theorem}\label{lbfme-0}
Given an $\epsilon>0$, let $\Teich(S_g)^{\geq \epsilon}$ be the $\epsilon$-thick part of $\Teich(S_g)$. Then, there exists a constant $B(\epsilon)>0$, depending only on $\epsilon$ (i.e. independent of the choice of genus $g$), such that
\[\lambda_{\min}(X) \geq -B(\epsilon), \ \  \forall X\in \Teich(S_g)^{\geq \epsilon}. \]
\end{theorem}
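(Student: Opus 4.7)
The plan is to combine the Tromba--Wolpert curvature formula with two uniform-in-genus ingredients. The first is a pointwise estimate on harmonic Beltrami differentials: for any harmonic Beltrami differential $\mu=\overline{\varphi}/\rho$ on $X$ and any $p\in X$ with $\inj(p)\geq\epsilon$,
\[
|\mu(p)|^2\;\leq\;C_1(\epsilon)\,\|\mu\|_{WP}^2,
\]
which follows by lifting to the universal cover, applying the Euclidean mean-value inequality to the subharmonic function $|\varphi|^2$ on a disc of hyperbolic radius $\epsilon$, and comparing Euclidean and hyperbolic area forms on such a ball (the distortion depends only on $\epsilon$). Equivalently, for any $L^2$-orthonormal basis $\{\mu_k\}$ of harmonic Beltrami differentials the Bergman function $\sum_k|\mu_k(p)|^2\leq C_1(\epsilon)$ throughout $\Teich(S_g)^{\geq\epsilon}$, independently of the genus. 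The second ingredient is the contractivity of the operator $D=-2(\Delta-2)^{-1}$ appearing in Tromba--Wolpert: since $D\geq 0$, $D(1)=1$, and $D$ has a positive integral kernel, $\|D\|_{L^2\to L^2}\leq 1$ and $\|D\|_{L^\infty\to L^\infty}\leq 1$ with no dependence on $X$.

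Since $\tilde{Q}\leq 0$ by \cite{Wu14-co}, the theorem reduces to showing $|\langle\tilde{Q}\omega,\omega\rangle|\leq B(\epsilon)\|\omega\|^2$ uniformly. Fix an $L^2$-orthonormal basis $\{\mu_k\}_{k=1}^{3g-3}$ of harmonic Beltrami differentials; then $\omega$ is encoded by a real skew-symmetric coefficient matrix $A$ in the induced real basis of $T_X\Teich$. Applying the Tromba--Wolpert formula and reorganizing sums yields two terms to control:
\[
I_1\;=\;\int_X D(F)\overline{F}\,dA,\qquad I_2\;=\;\int_X\big\langle M(p)(Av(p)),\,Av(p)\big\rangle\,dA,
\]
where $v(p)$ has entries $\mu_k(p)$, $F(p)$ is the sesquilinear form in $v(p)$ encoded by $A$ (purely imaginary by skew-symmetry), and $M(p)_{il}=D(\mu_i\overline{\mu_l})(p)$. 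Writing $v=\alpha+i\beta$, skew-symmetry gives $F=-2i\alpha^T A\beta$, hence pointwise $|F(p)|^2\leq 4\|A\alpha(p)\|^2\|\beta(p)\|^2\leq 4C_1(\epsilon)\|A\alpha(p)\|^2$. With $\Pi_{jk}=\int\alpha_j\alpha_k\,dA$ and $\Xi_{jk}=\int\beta_j\beta_k\,dA$, orthonormality $\int\mu_j\overline{\mu_k}\,dA=\delta_{jk}$ translates into $\Pi+\Xi=I$, so $0\leq\Pi\leq I$. The trace inequality for positive operators then gives
\[
\int_X\|A\alpha\|^2\,dA\;=\;\tr((-A^2)\Pi)\;\leq\;\|\Pi\|_{\mathrm{op}}\,\tr(-A^2)\;=\;\|A\|_F^2,
\]
so $\|F\|_{L^2}^2\leq 4C_1(\epsilon)\|A\|_F^2$ and $|I_1|\leq\|F\|_{L^2}^2$. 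For $I_2$, the representation $M(p)=\int K_D(p,q)\,v(q)v(q)^*\,dA(q)$ exhibits $M(p)$ as pointwise positive semidefinite, and $\|M(p)\|_{\mathrm{op}}=\sup_{\|\nu\|_{WP}=1}D(|\nu|^2)(p)\leq C_1(\epsilon)$ by $L^\infty$-contractivity of $D$ applied to $|\nu|^2\leq C_1(\epsilon)$; therefore $|I_2|\leq C_1(\epsilon)\int_X\|Av(p)\|^2\,dA=C_1(\epsilon)\|A\|_F^2$. Since $\|A\|_F^2\asymp\|\omega\|^2$, combining yields the desired $B(\epsilon)$.

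The main obstacle is the uniform $L^2$-bound $\|F\|_{L^2}^2\leq 4C_1(\epsilon)\|A\|_F^2$. A naive pointwise bound on $|F|$ followed by integration would introduce a factor of $\Area(X)$ and thus a dependence on the genus. Avoiding this requires simultaneously exploiting the skew-symmetric form $F=-2i\alpha^T A\beta$ (which pulls out a pointwise factor $\|\beta(p)\|^2\leq C_1(\epsilon)$) and the Gram identity $\Pi+\Xi=I$ from orthonormality (which reduces $\int\|A\alpha\|^2\,dA$ to a trace dominated by $\|A\|_F^2$ alone). The $I_2$ term is handled analogously by replacing naive termwise sums with the Bergman-type representation of $M(p)$, whose positivity converts the $L^\infty$-contractivity of $D$ into a $g$-independent operator-norm bound.
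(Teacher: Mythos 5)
Your proposal is correct and, despite the different linear-algebra packaging, rests on exactly the two pillars the paper uses: the pointwise estimate $|\mu(p)|^2\leq C(\epsilon)\|\mu\|_{WP}^2$ on $\epsilon$-thick surfaces (Proposition \ref{lnubw}, equivalently your Bergman-diagonal bound $\sum_k|\mu_k(p)|^2\leq C_1(\epsilon)$) and the $L^2$/$L^\infty$ contractivity of $D=-2(\Delta-2)^{-1}$ (Lemma \ref{ufD}). The mechanism by which you avoid the genus-dependent factor $\Area(X)\sim g$ --- taking a sup-norm bound in one variable and collapsing the other variable by orthonormality --- is precisely the content of the paper's Lemmas \ref{glnbw} and \ref{iofg}, where one fixes $z$, views $K(z,\cdot)=\sum_{i,j}d_{ij}\mu_i(\cdot)\overline{\mu_j(z)}$ as a harmonic Beltrami differential in the second variable, applies Proposition \ref{lnubw}, and then integrates in $z$ to land on $\sum_{i,j}d_{ij}^2$; your Gram-matrix identity $\Pi+\Xi=I$ and the trace inequality $\tr((-A^2)\Pi)\leq\|A\|_F^2$ accomplish the same reduction.

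Two points in your sketch deserve care. First, the claim that the Tromba--Wolpert formula ``reorganizes'' into exactly the two terms $I_1$ and $I_2$ is not literally accurate: the expansion (Proposition \ref{cof}, imported from \cite{Wu14-co}) contains a third cross term $2\Re\iint G(z,w)\,K(z,w)K(w,z)\,dA(w)dA(z)$, which is not of the form $\iint G|K|^2$. It is, however, dominated in absolute value by the $I_2$-type term via Cauchy--Schwarz together with the symmetry $G(z,w)=G(w,z)$ --- this is exactly what the paper's Proposition \ref{upfco}(2) does --- so your strategy survives with an adjusted constant; you should make that absorption explicit. Second, a general element of $\wedge^2T_X\Teich(S_g)$ is encoded by a $(6g-6)\times(6g-6)$ real skew matrix, i.e.\ by all three coefficient blocks $a_{ij}$, $b_{ij}$, $c_{ij}$ of \eqref{eqn:defnA}; the diagonal kernel that actually appears is $\Im\{F(z,z)+\mathbf{i}H(z,z)\}$ with $F$ governed by the skew block $a+c$ and $H$ by the full (not skew) block $b$, so your single $(3g-3)\times(3g-3)$ skew matrix $A$ with $F=-2i\alpha^TA\beta$ only captures part of the data. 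This is bookkeeping rather than a gap --- the $H$-term is handled by the identical estimate with $d_{ij}=b_{ij}$, as in the paper's proof --- but as written your parametrization does not cover an arbitrary $\omega$.
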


The argument will present an explicit constant $B(\epsilon)$ whose asymptotics as $\epsilon \to 0,\infty$ we will exploit in later sections of the paper. In addition, a direct corollary is that the sectional curvature of the Weil-Petersson metric, restricted to the thick part of the moduli space, is uniformly bounded from below, a result due originally to Huang \cite{Huang}.\newline

\subsection{Uniform pinched bounds on the norm of the curvature operator on sequences of increasingly thick surfaces.}\label{sec:intro-uniform bounds- thick}


The subject of the asymptotic geometry of $\mathbb{M}_g$, the moduli space of $S_g$, as $g$ tends to infinity, has recently become quite active: see for example Mirzakhani \cite{MM1, MM2, MM3, MM4} and Cavendish-Parlier \cite{CP12} (results obtained by refining Brock's \cite{Brock03} quasi-isometry of $\Teich(S_g)$ to the pants graph).  In terms of curvature bounds, by combining the results in Wolpert \cite{Wol86} and Teo \cite{Teo08}, we may easily see that, restricted on the thick part of the moduli space, the scalar curvature $\Sca$ is comparable to $-g$ as $g$ goes to infinity. 
Of course, the \wep scalar curvature $\Sca$ is a sum of $(3g-3)(6g-7)$ negative numbers, begging the question of whether the blowup $\Sca \asymp -g$ is attributable to the large and growing number of sectional curvatures in the scalar curvature sum, or whether the smallest sectional curvatures are themselves tending to $-\infty$ without bound.
In the sixth section of this paper, we study the asymptotic behaviors of certain eigenvalues of $\tilde{Q}$ as the genus goes to infinity.    

We recall for context that Buser and Sarnak \cite{BS94} proved  that there exists a family of closed hyperbolic surfaces $X_{k}$ of genus $g_k$ with $g_k \to \infty$ as $k \to \infty$ and $\inj(X_k)\geq \frac{2\ln{g_k}}{3}$. Now, we noted above that the constant $B(\epsilon)$ in Theorem~\ref{lbfme-0} may be explicitly stated. By examining the asymptotic property of that constant $B(\epsilon)$ as $\epsilon \to \infty$, we prove that the minimal eigenvalue of the Weil-Petersson curvature operator on a surface of large injectivity radius is pinched by two explicit negative numbers.

\begin{theorem}\label{largei-0}
Let $\{X_{g}\}$ be a sequence of hyperbolic surfaces whose injectivity radii satisfy $\lim_{g\to \infty}\inj(X_g)=\infty$. Then, for $g$ large enough, 
\[ \frac{-25}{\pi}\leq \lambda_{\min}(X_g) \leq \frac{-1}{2\pi}.\]
\end{theorem}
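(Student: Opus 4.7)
The upper bound $\lambda_{\min}(X_g) \leq -\frac{1}{2\pi}$ is already an immediate consequence of Theorem~\ref{uubf-0} applied pointwise at each $X_g$, independently of any hypothesis on the injectivity radius. So the entire substance of the proof lies in producing the universal lower bound $-\frac{25}{\pi}$ for large $g$.

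The plan is to derive the lower bound directly from Theorem~\ref{lbfme-0}. Since $\inj(X_g) \to \infty$, for every fixed $\epsilon$ one has $X_g \in \Teich(S_g)^{\geq \epsilon}$ for all sufficiently large $g$, so $\lambda_{\min}(X_g) \geq -B(\inj(X_g))$. The theorem then reduces to an asymptotic statement about the explicit constant $B(\epsilon)$ produced in the proof of Theorem~\ref{lbfme-0}, namely the bound
\[\limsup_{\epsilon\to\infty} B(\epsilon) \leq \frac{25}{\pi}.\]
I would open up the proof of Theorem~\ref{lbfme-0} to extract the form of $B(\epsilon)$. The \wep curvature operator evaluated on a bivector $\sum_i \mu_i \wedge \nu_i$ is expressed by the Tromba--Wolpert formula as an integral involving quantities like $(D+2)^{-1}(\mu\bar\nu)$, where $D$ is the hyperbolic Laplacian and the $\mu,\nu$ are harmonic Beltrami differentials. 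Controlling this in terms of $L^2$-norms of the $\mu_i,\nu_i$ requires pointwise bounds on the Green's function of $D+2$ on $X$, and on the $L^\infty$ norms of unit-$L^2$ harmonic Beltrami differentials; both these quantities admit geometric upper bounds in terms of the injectivity radius $\epsilon$ via a collar/disk-covering argument.

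The main obstacle is a quantitative one: showing that as $\epsilon \to \infty$, the relevant disks $B(x,\epsilon)$ embedded in $X$ yield estimates that converge, with the correct explicit constant, to the analogous quantities computed on the hyperbolic plane $\Hy^2$. In particular, the Green's function for $D+2$ on $X$ of injectivity radius at least $\epsilon$ differs from its $\Hy^2$-counterpart by an error that decays (exponentially) in $\epsilon$, and harmonic Beltrami differentials on such $X$ are pointwise comparable to model differentials on $\Hy^2$. The bookkeeping in assembling these estimates into the specific number $25/\pi$, rather than just some finite universal constant, is where the work lies.

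Finally, combining the two bounds gives, for $g$ large enough so that $B(\inj(X_g)) \leq \frac{25}{\pi}$,
\[\frac{-25}{\pi} \leq -B(\inj(X_g)) \leq \lambda_{\min}(X_g) \leq \frac{-1}{2\pi},\]
which is the desired conclusion.
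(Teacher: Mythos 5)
Your reduction is exactly the one the paper uses: the upper bound is Theorem~\ref{uubf-0} verbatim, and the lower bound comes from Theorem~\ref{lbfme-0} together with the claim that the explicit constant $B(\epsilon)$ stays below $\tfrac{25}{\pi}$ for $\epsilon$ large. The paper's entire proof is the observation that $\lim_{\epsilon\to\infty}B(\epsilon)=\tfrac{24}{\pi}<\tfrac{25}{\pi}$, after which the conclusion is immediate. So structurally you are on the paper's path.

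The issue is that you leave precisely this step -- the only nontrivial content of the theorem -- as a plan, and the plan you sketch points at the wrong difficulty. In the proof of Theorem~\ref{lbfme-0} the Green's function of $D=-2(\Delta-2)^{-1}$ is used only through soft properties (positivity, symmetry, $D(1)=1$, self-adjointness, and the $L^2$ contraction of Lemma~\ref{ufD}); no comparison with the Green's function on $\Hy^2$ and no exponentially decaying error terms are needed. The entire dependence on the injectivity radius enters through the mean-value constant $C(\inj(X))$ of Proposition~\ref{lnubw} (Definition~\ref{defn:C-inj}), which controls $\sup|\mu|^2$ by $C(r)\int_{B(q;r)}|\mu|^2\,dA$, and the proof of Theorem~\ref{lbfme-0} yields $B(\epsilon)=32\,C(\epsilon/2)$. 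Since $C(r)\to\tfrac{3}{4\pi}$ as $r\to\infty$, one gets $B(\epsilon)\to 32\cdot\tfrac{3}{4\pi}=\tfrac{24}{\pi}$, a one-line computation rather than the delicate bookkeeping you anticipate. As written, your argument establishes the theorem with ``some finite universal constant'' in place of $\tfrac{25}{\pi}$ but does not yet deliver the stated constant; filling the gap requires only quoting the explicit $B(\epsilon)$ and evaluating its limit.
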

\

 
We refine this analysis of the family of eigenvalues further in the next passage.


\subsection{Bounds for eigenvalues in the thick part of $\M_g$ for large genus $g$.} Thus far, we have focused exclusively on the minimal eigenvalue $\lambda_{\min}(X_g)$.  In the latter portion of this part of the paper, we study some other eigenvalues of the Weil-Petersson curvature operator and their dependence on the genus $g$. In section~ \ref{s-coos}, we restate Theorem 1.1 in \cite{Wu14-co}  in terms of the eigenvalues of the Weil-Petersson curvature operator. More precisely,
\begin{theorem}\label{(3g-3)^2-0}
For any $X \in \Teich(S_g)$, the Weil-Petersson curvature operator $\tilde{Q}$ at $X$ has exactly $(3g-3)^2$ negative eigenvalues.
\end{theorem}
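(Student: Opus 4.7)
The plan is to deduce the eigenvalue count from the characterization of the zero level set of $\tilde{Q}$ established in \cite{Wu14-co}, together with the non-positivity of $\tilde{Q}$ proved there. The first observation is dimensional: since $\Teich(S_g)$ has real dimension $6g-6$, the ambient space $\wedge^{2}T_X\Teich(S_g)$ has real dimension $\binom{6g-6}{2}=(3g-3)(6g-7)$. Because $\tilde{Q}$ is a symmetric operator on this Euclidean space, counting negative eigenvalues reduces to computing the dimension of its kernel.

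Next, I would invoke the two results from \cite{Wu14-co} as black boxes. First, $\tilde{Q}$ is non-positive definite, so all eigenvalues lie in $(-\infty,0]$. Second, there is an explicit description of the null set $\{V\in\wedge^{2}T_X\Teich(S_g)\suchthat\langle\tilde{Q}(V),V\rangle=0\}$. By non-positivity, the Cauchy--Schwarz-type argument for semidefinite symmetric forms identifies this null set with the kernel of $\tilde{Q}$. Thus all the work is transferred to the kernel.

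The key quantitative step is reading off the real dimension of the kernel from the characterization in \cite{Wu14-co}. Pointwise, the kernel corresponds to the subspace of $\wedge^{2}T_X\Teich(S_g)$ annihilated by the Tromba--Wolpert curvature form; using the decomposition of the complexified exterior square into type $(1,1)$ and type $(2,0)+(0,2)$ pieces under the Weil-Petersson K\"ahler structure, the kernel is exactly the real points of the $(2,0)\oplus(0,2)$ summand. Its real dimension equals $2\binom{3g-3}{2}=(3g-3)(3g-4)$. This dimension count is the step I expect to require the most care: one must verify that the description of the null vectors in \cite{Wu14-co} really does cut out the real $(2,0)\oplus(0,2)$ subspace and nothing more, which requires checking both containments against the integral formula for $R$.

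Granted the kernel dimension, the conclusion is immediate arithmetic: the negative eigenspace has dimension
\[
(3g-3)(6g-7)-(3g-3)(3g-4)=(3g-3)\bigl[(6g-7)-(3g-4)\bigr]=(3g-3)^{2},
\]
as claimed. It is worth remarking that this count is consistent with the real dimension $n^2$ of the space of real $(1,1)$-forms on a complex $n$-dimensional vector space, reflecting the fact that the negative directions of $\tilde{Q}$ are exactly those of Hermitian type, a structural feature one would want to emphasize when the proof is written in full.
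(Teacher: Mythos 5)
Your proposal is correct and follows essentially the same route as the paper: both reduce the count to the dimension of the kernel, use the semi-definiteness of $\tilde{Q}$ to identify the null set of the quadratic form $Q(A,A)$ with the kernel of the operator (the paper's Lemma on zero-eigenvectors), and then compute that kernel to have real dimension $(3g-3)(3g-4)$ before subtracting from $(3g-3)(6g-7)$. The only cosmetic difference is that you describe the kernel invariantly as the real $(2,0)\oplus(0,2)$ part (equivalently the $-1$-eigenspace of the induced action of $\textbf{J}$ on $\wedge^2 T_X\Teich(S_g)$, i.e. the image of $\id-\textbf{J}$ appearing in the characterization $A=B-\textbf{J}\circ B$), whereas the paper exhibits the explicit basis $\{\frac{\partial}{\partial x_{i}}\wedge \frac{\partial}{\partial x_{j}}-\frac{\partial}{\partial y_{i}}\wedge \frac{\partial}{\partial y_{j}},\ \frac{\partial}{\partial x_{i}}\wedge \frac{\partial}{\partial y_{j}}-\frac{\partial}{\partial x_{j}}\wedge \frac{\partial}{\partial y_{i}}\}_{i<j}$ and checks both containments by a coefficient computation.
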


Thus, for any $X\in \Teich(S_g)$, one may list the set of all non-zero eigenvalues of the Weil-Petersson curvature operator at $X$ as follows
\[\lambda_{(3g-3)^2}(X)\leq \lambda_{(3g-3)^2-1}(X)\leq \cdots \lambda_{2}(X)\leq \lambda_{1}(X)<0.\]
The minimal eigenvalue $\lambda_{\min}(X)=\lambda_{(3g-3)^2}(X)$ basically measures the norm of the curvature operator $\tilde{Q}$ at $X$.

For our next result, we interpret the bulk asymptotics of these collection of eigenvalues in terms of their $\ell^p$ norms. This will allow us to provide a family of estimates that bridge what is already known with our results in Theorems~\ref{uubf-0} and \ref{lbfme-0}.

We define the \textsl{$\ell^p$-norm $||\tilde{Q}||_{\ell^p}(X)$} of the Weil-Petersson curvature operator at $X$ as
\[||\tilde{Q}||_{\ell^p}(X):=(\sum_{i=1}^{(3g-3)^2}{|\lambda_i(X)|^p})^{\frac{1}{p}} \ (1\leq p\leq \infty).\]

Assume that $\{X_g\}_{g\geq 1}$ is a sequence of hyperbolic surfaces with $\inj(X_g)\geq \epsilon>0$ for any given positive constant $\epsilon$. \\

(1). When $p=1$, by the results in Wolpert \cite{Wol86} and Teo \cite{Teo08} we know that $||\tilde{Q}||_{\ell^1}(X_g)$=$-\Sca(X_g)$ is comparable to $g$, as $g$ goes to infinity.\\

(2). When $p=\infty$, by Theorem \ref{uubf-0} and \ref{lbfme-0} we know that $||\tilde{Q}||_{\ell^\infty}(X_g)=-\lambda_{(3g-3)^2}(X_g)$ is comparable to 1.\\

For $1<p<\infty$, we have the following upper bound.

\begin{theorem}\label{ubfl^p}
Given an $\epsilon>0$, let $\Teich(S_g)^{\geq \epsilon}$ be the $\epsilon$-thick part of $\Teich(S_g)$. Then, for any $1<p< \infty$, there exists a constant $B(\epsilon)>0$ (the same constant in Theorem \ref{lbfme-0}), depending only on $\epsilon$, such that the $\ell^p$-norm  of the Weil-Petersson curvature operator satisfies
\[||\tilde{Q}||_{\ell^p}(X_g) \leq B(\epsilon) \cdot g^{\frac{1}{p}}, \ \  \forall X_g \in \Teich(S_g)^{\geq \epsilon}. \]
\end{theorem}

\begin{rem}
It is not known that \textsl{whether $||\tilde{Q}||_{\ell^p}(X_g)$ is comparable to $g^{\frac{1}{p}}$ as $g$ goes to infinity} except in the cases $p=1, \infty$. 
\end{rem}

Fixing $i \in \{1,2,\cdots,(3g-3)^2\}$, the $i$-th eigenvalue $\lambda_{i}(X)$ is a continuous function on $\Teich(S_g)$ since the Weil-Petersson metric is smooth. Thus, the Mumford compactness theorem implies that the function $\lambda_{i}(X)$ achieves its minima and maxima in the thick part $\Teich(S_g)^{\geq \epsilon}$ for fixed $\epsilon>0$;
we denote those minima and maxima by $\underline{\lambda}^{\epsilon}_{i}(g)$ and $\overline{\lambda}^{\epsilon}_{i}(g)$ respectively. In this notation, focusing our attention now on the index $i$, Theorem \ref{uubf-0} and Theorem \ref{lbfme-0} state that both $\underline{\lambda}^{\epsilon}_{(3g-3)^2}(g)$ and $\overline{\lambda}^{\epsilon}_{(3g-3)^2}(g)$ are pinched by two negative numbers which are independent of the genus of the surface. In section 6 we show that, if that index $i$ is not close to $(3g-3)^2$, then both $\underline{\lambda}^{\epsilon}_{i}(g)$ and $\overline{\lambda}^{\epsilon}_{i}(g)$ could be arbitrarily close to zero, once we take $g$ large enough. 

More precisely,  we consider the $i^{th}$ eigenvalue $\lambda_i$ of $\tilde{Q}$.  Suppose that this index $i \in \{1,2,3,..., (3g-3)^2\}$ for this eigenvalue is one of the earlier indices in the sense that $\frac{i(g)}{9g^2} = \alpha < 1$ (for example, $i \notin \{9g^2 - 6g - 1, 9g^2 - 6g, 9g^2 -6g+1, ...., (3g-3)^2\}$).

\begin{theorem}\label{fftz-0}
Let $\lambda_i$ be an eigenvalue of $\tilde{Q}$ with index $i = i(g)$ satisfying the condition $\limsup_{g\to \infty} \frac{i}{9g^2} = \alpha < 1$ just above.
Then, for $g$ sufficiently large, we have
\[ \frac{-B(\alpha,\epsilon)}{g}\leq \underline{\lambda}^{\epsilon}_{i(g)}(g)\leq  \overline{\lambda}^{\epsilon}_{i(g)}(g)<0\]
where $B(\alpha,\epsilon)$ is a constant only depending on $\alpha$ and $\epsilon$. In particular,
\[\lim_{g\to \infty} \underline{\lambda}^{\epsilon}_{i}(g)=\overline{\lambda}^{\epsilon}_{i}(g)=0, \ \  for \ all \ \ 1\leq i \leq 8g^2.\] 
\end{theorem}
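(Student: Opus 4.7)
The plan is a trace/counting argument resting on three inputs: (a) the standard identity $\tr \tilde{Q}_X = \tfrac{1}{2} Sca(X)$ between the trace of the curvature operator on $\wedge^2 T_X\Teich(S_g)$ and the scalar curvature; (b) the Wolpert--Teo asymptotic $|Sca(X)|\leq C_0(\epsilon)\,g$ on the $\epsilon$-thick part, already cited in the introduction; and (c) Theorem \ref{(3g-3)^2-0}, namely that $\tilde{Q}_X$ is non-positive with exactly $(3g-3)^2$ strictly negative eigenvalues.

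The upper bound $\overline{\lambda}^{\epsilon}_{f(g)}(g)<0$ is immediate: the assumption $\alpha<1$ forces $f(g)\leq (3g-3)^2$ for $g$ large, and then (c) gives $\lambda_{f(g)}(X)\leq \lambda_{1}(X)<0$ for every $X\in \Teich(S_g)$. Since $\lambda_{f(g)}$ is continuous and mapping-class-group-invariant, Mumford compactness of the thick part of $\M_g$ ensures that the supremum is attained and strictly negative.

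The real work is in the lower bound. Fix $X\in \Teich(S_g)^{\geq \epsilon}$. By (a)--(c),
\[\sum_{i=1}^{(3g-3)^2}\lambda_i(X)=\tfrac{1}{2}Sca(X)\geq -\tfrac{1}{2}C_0(\epsilon)\,g.\]
Suppose $\lambda_{f(g)}(X)\leq -\delta$ for some $\delta>0$. The ordering $\lambda_{(3g-3)^2}\leq \cdots \leq \lambda_1$ then forces $\lambda_i(X)\leq -\delta$ for every $i$ with $f(g)\leq i\leq (3g-3)^2$; combining this with the trivial bound $\lambda_i\leq 0$ for the remaining indices gives $\sum\lambda_i(X)\leq -\delta\bigl((3g-3)^2-f(g)+1\bigr)$. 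Comparing both bounds,
\[\delta\cdot\bigl((3g-3)^2-f(g)+1\bigr)\leq \tfrac{1}{2}C_0(\epsilon)\,g.\]
The hypothesis $\limsup f(g)/9g^2=\alpha<1$ makes the parenthesized factor at least a constant multiple of $(1-\alpha)g^2$ for all sufficiently large $g$, so $\delta\leq B(\alpha,\epsilon)/g$ for an explicit constant $B(\alpha,\epsilon)$ depending only on $C_0(\epsilon)$ and $1-\alpha$. Since $\delta$ was arbitrary with $\lambda_{f(g)}(X)\leq -\delta$, we obtain $\lambda_{f(g)}(X)\geq -B(\alpha,\epsilon)/g$ uniformly on $\Teich(S_g)^{\geq \epsilon}$.

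For the ``in particular'' clause, apply the estimate with $f(g)\equiv 8g^2$ (so $\alpha=8/9$): both $\underline{\lambda}^{\epsilon}_{8g^2}(g)$ and $\overline{\lambda}^{\epsilon}_{8g^2}(g)$ are trapped in $[-B/g,0)$, and the monotonicity $\lambda_i(X)\geq \lambda_{8g^2}(X)$ for $i\leq 8g^2$ transports the same squeeze to every such index. The only non-elementary input is the linear-in-$g$ scaling of the scalar curvature on the thick part; everything else is trace bookkeeping, and I do not foresee any serious obstacle beyond confirming the constants carried along the way.
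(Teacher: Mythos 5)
Your proposal is correct and follows essentially the same route as the paper: both rest on Teo's linear-in-$g$ bound for the scalar curvature on the $\epsilon$-thick part, the count of exactly $(3g-3)^2$ negative eigenvalues, and the observation that the $(3g-3)^2-f(g)+1$ most negative eigenvalues each dominate $\lambda_{f(g)}$ in absolute value, so that the hypothesis $\limsup f(g)/9g^2=\alpha<1$ yields a denominator of order $(1-\alpha)g^2$. The only cosmetic difference is your trace normalization $\tr\tilde{Q}=\tfrac12 Sca(X)$ versus the paper's Lemma~\ref{bfoco}, which merely changes the unspecified constant $B(\alpha,\epsilon)$.
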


A direct consequence is 
\begin{corollary}\label{poffm}
Fix $\epsilon_0>0$, then for any $\epsilon>0$, the probability
\[\Prob\{1\leq i \leq (3g-3)^2; \ \ |\underline{\lambda}^{\epsilon_0}_{i}(g)|<\epsilon\}\to 1 \ as \ g \to \infty.\]
\end{corollary}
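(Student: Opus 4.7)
The plan is to reduce the corollary to a purely combinatorial counting statement on top of Theorem \ref{fftz-0}, by exploiting the fact that the ordered eigenvalues give a monotone sequence $i\mapsto |\underline{\lambda}^{\epsilon_0}_i(g)|$, so that an upper bound on the entry indexed by $f(g)$ automatically controls every smaller index.

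First, I would record the monotonicity step. For any $X\in\Teich(S_g)^{\geq \epsilon_0}$ and indices $i<j$, the labeling convention
\[
\lambda_{(3g-3)^2}(X)\leq \cdots \leq \lambda_1(X)<0
\]
gives $\lambda_i(X)\geq \lambda_j(X)$. Taking the minimum over the thick part is order-preserving, so $\underline{\lambda}^{\epsilon_0}_i(g)\geq \underline{\lambda}^{\epsilon_0}_j(g)$ and, since both sides are negative, $|\underline{\lambda}^{\epsilon_0}_i(g)|\leq |\underline{\lambda}^{\epsilon_0}_j(g)|$. In other words, if a given index satisfies the event $\{|\underline{\lambda}^{\epsilon_0}_\param(g)|<\epsilon\}$, then so do all smaller indices.

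Next, I would apply Theorem \ref{fftz-0} at any threshold strictly below the full spectrum. Fix an arbitrary $\alpha\in(0,1)$ and set $f(g)=\lfloor \alpha\cdot 9g^2\rfloor$, so that $\limsup_{g\to\infty} f(g)/(9g^2)=\alpha<1$ and the hypothesis of Theorem \ref{fftz-0} is met. The theorem then yields, for $g$ large,
\[
|\underline{\lambda}^{\epsilon_0}_{f(g)}(g)|\leq \frac{B(\alpha,\epsilon_0)}{g}.
\]
Once $g$ is also large enough that $B(\alpha,\epsilon_0)/g<\epsilon$, the monotonicity step propagates this bound to every $i\leq f(g)$, producing at least $f(g)$ indices in the event counted by the corollary. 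Therefore the probability in question is at least $f(g)/(3g-3)^2$, which converges to $\alpha$, so $\liminf_{g\to\infty}\Prob\geq \alpha$. Letting $\alpha\to 1$ finishes the argument.

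There is no serious obstacle: the corollary is a soft counting consequence of Theorem \ref{fftz-0}. The only step requiring any care is the monotonicity observation for $|\underline{\lambda}^{\epsilon_0}_i(g)|$ in $i$, together with the correct choice of $f(g)$ so that Theorem \ref{fftz-0} can be invoked with $\alpha$ as close to $1$ as desired; the rest is arithmetic with the ratio $f(g)/(3g-3)^2$.
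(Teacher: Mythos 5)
Your argument is correct and is essentially the paper's own proof, just with the details (the monotonicity of $i\mapsto|\underline{\lambda}^{\epsilon_0}_i(g)|$ and the explicit choice $f(g)=\lfloor 9\alpha g^2\rfloor$) spelled out rather than left implicit. The paper likewise deduces $\alpha\leq\liminf_{g\to\infty}p(g)$ for each $\alpha\in[0,1)$ from Theorem \ref{fftz-0} and lets $\alpha\to 1$.
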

\begin{proof}
Let $p(g):=\Prob\{1\leq i \leq (3g-3)^2; \ \ |\underline{\lambda}^{\epsilon_0}_{i}(g)|<\epsilon\}$. Theorem~\ref{fftz-0} then says that, for any $0\leq \alpha<1$,  
\[\alpha \leq \liminf_{g\to \infty}p(g)\leq \limsup_{g\to \infty}p(g)\leq 1.\]
Since $\alpha$ is arbitrary in $[0,1)$, $\lim_{g\to \infty}p(g)=1.$
\end{proof}

Mirzakhani (Theorem 4.2 in \cite{MM4}) proved that for a small enough number $\epsilon_0>0$, the volume of the $\epsilon_0$-thin part of the moduli space $\mathbb{M}_g$ is comparable to $\epsilon_0^2 Vol(\mathbb{M}_g)$ as $g \to \infty$. Corollary~\ref{poffm} suggests that the moduli space $\mathbb{M}_g$ tends to be flat as $g$ goes to infinity in a probabilistic sense. \newline

\subsection{Existence of holomorphic lines on sufficiently thick surfaces with uniformly pinched Weil-Petersson holomorphic sectional curvatures.}\label{subsec:bddHolCurvs-Intro}
We now provide a bit more discussion and a clarification of Theorem~\ref{ubfhsc-0} that we stated in the first overview subsection.

In \cite{Tromba86, Wol86} it was shown that the sectional curvature of the Weil-Petersson metric for $\Teich(S_g)$ is negative and the holomorphic sectional curvature is bounded above by a negative number comparable to $-\frac{1}{g}$. The dependence of this bound on $g$ begs the question as to whether there are bounds on the sectional curvature that are independent of the topology of the surface, even if one allows an additional restriction to a thick part of moduli space. However, Teo in \cite{Teo08} showed that, restricted to any thick part of the moduli space, the Ricci curvature is uniformly bounded from below. As the Ricci curvature is a trace over $6g-7$ curvatures, we then see that some sectional curvatures go to zero along any sequence of surfaces $X_g$ in the thick part of $\Teich(S_g)$ as $g \to \infty$. This also suggests the question\footnote{We are told by Zheng Huang that Question \ref{qom} is originally raised by Maryam Mirzakhani. We would like to thank both of them here.} whether,
\begin{question}\label{qom}
Restricted to any thick parts of the moduli spaces, do all of the sectional curvatures tend to zero as the genus goes to infinity?
\end{question}

The goal of the second part of this paper is to show, for the portion of the \tec space composed of surfaces which are ``sufficiently thick" -- a term we will define precisely in Definition~\ref{defn: suff thick} -- 
the existence of holomorphic lines whose 
holomorphic sectional curvatures are uniformly bounded away from zero. This result, partially stated as Theorem~\ref{ubfhsc-0} in the opening paragraphs of the introduction, gives a negative answer to the question above. 

In notational preparation for the more precise version of the result, we recall that a holomorphic section of the (complexified) tangent space $T_X\Teich(S_g)$
of $X \in \Teich(S_g)$ in the \tec space $\Teich(S_g)$ is defined by a harmonic Beltrami differential $\mu$. Then Theorem~\ref{ubfhsc-0} may be (slightly) refined to read as

\begin{theorem}\label{ubfhsc}
There is a constant $\iota_0$ so that if $X \in \Teich(S_g)$ is sufficiently thick, i.e. $\inj(X) > \iota_0$, then there exists a $\mu_X $ in the tangent space $T_X\Teich(S_g)$ so that the \wep holomorphic sectional curvature $K(\mu_X)$ along $\mu_X$ satisfies 

\[ K(\mu_X) \leq \frac{-81C_0}{6400\cdot \pi^2}<0\]
where $C_0=\frac{2}{3C(1)^2\cdot Vol_{\D}(B(0;1))}$. 
\end{theorem}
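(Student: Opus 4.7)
The plan is to construct an explicit ``test'' tangent direction $\mu_X \in T_X\Teich(S_g)$ concentrated inside an embedded ball of the sufficiently thick surface $X$, and then to estimate the Tromba--Wolpert holomorphic sectional curvature formula
\[
K(\mu_X) \;=\; -\frac{2\displaystyle\int_X (D+2)^{-1}(|\mu_X|^2)\cdot |\mu_X|^2\,dA_X}{\|\mu_X\|_{WP}^4},
\]
where $D$ denotes the positive Laplacian of the hyperbolic metric on $X$. Intuitively, when $\inj(X)$ is large, localized harmonic Beltrami differentials on $X$ look very much like their analogs on the Poincar\'e disk $\D$, where $(D_\D+2)^{-1}$ has an explicit positive Green's function that is bounded below by a universal constant on the unit hyperbolic ball. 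Exploiting this comparison turns the curvature estimate into a concrete computation in $\D$.

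First I would uniformize $X=\Gamma\backslash\D$ so that a chosen point $p\in X$ lifts to $0\in\D$, and note that the hyperbolic ball $B(0,\iota_0)$ embeds into $X$. Next, pick a holomorphic quadratic differential $\varphi_0$ on $\D$ chosen so that $\mu_0=\overline{\varphi_0}/\rho^2$ is essentially concentrated in the unit hyperbolic ball $B(0,1)$ (a bump adjusted against the hyperbolic density $\rho^2$), and form the Poincar\'e series $\Theta(\varphi_0)=\sum_{\gamma\in\Gamma}\gamma^*\varphi_0$. Setting $\mu_X=\overline{\Theta(\varphi_0)}/\rho_X^2$ gives a harmonic Beltrami differential on $X$. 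For $\iota_0$ sufficiently large, the translates $\gamma(B(0,1))$ with $\gamma\neq\id$ are far from $B(0,1)$, so $\mu_X$ agrees with $\mu_0$ on $B(0,1)$ up to errors that tend to zero as $\iota_0\to\infty$. Consequently $\|\mu_X\|_{WP}^2$ is essentially $\int_{B(0,1)}|\mu_0|^2\,dA_\D$, and the pointwise values of $|\mu_X|^2$ outside $B(0,1)$ contribute negligibly.

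The heart of the argument is a lower bound for the numerator. Using positivity of the Green's function for $D+2$ on $X$ and comparing with the disk Green's function on the embedded $B(0,\iota_0)$ (the resolvent of $D+2$ has a uniform spectral gap, so its kernel decays off the diagonal), I bound
\[
\int_X (D+2)^{-1}(|\mu_X|^2)\cdot|\mu_X|^2\,dA_X \;\geq\; \frac{2}{3}\cdot\frac{1}{C(1)^2\,\mathrm{Vol}_\D(B(0,1))}\cdot\Bigl(\int_{B(0,1)}|\mu_0|^2\,dA_\D\Bigr)^{2}
\]
up to errors vanishing with $1/\iota_0$, where $C(1)$ enters through a pointwise-from-$L^2$ Schwarz-type bound for holomorphic quadratic differentials on a unit hyperbolic ball. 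Dividing by $\|\mu_X\|_{WP}^4\approx (\int_{B(0,1)}|\mu_0|^2\,dA_\D)^2$ cancels the bump-profile dependence and yields a purely universal bound; the explicit numerics $\frac{81}{6400\pi^2}$ come from tracking the ratio between the scale on which $\mu_0$ is concentrated and the scale of the supporting unit ball (hence the $(9/80)^2$-type factor), together with the hyperbolic area normalization carrying the $\pi^2$.

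The main obstacle, and where I would spend the most effort, is the local-to-global comparison of the resolvent kernel of $D+2$ on $X$ against the disk kernel on $B(0,\iota_0)$. One must verify that the off-diagonal and boundary contributions to the Green's function degenerate at an explicit rate as $\iota_0\to\infty$, sharp enough that the loss can be absorbed into the factor $\frac{2}{3}$ in front of $\frac{1}{C(1)^2 \mathrm{Vol}_\D(B(0,1))}$. This also pins down the choice of $\iota_0$ that qualifies $X$ as ``sufficiently thick'' in the sense of Definition~\ref{defn: suff thick}.
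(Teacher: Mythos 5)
Your overall skeleton --- a Poincar\'e series built from an explicit holomorphic function, localization on a sufficiently thick surface, the Schwarz-type constant $C(1)$ from Proposition \ref{lnubw}, and cancellation of the profile dependence in the final ratio --- matches the paper's. But the two steps you defer are precisely where the content lies, and one of them is both unproved and unnecessary. The resolvent comparison you propose (bounding the numerator by comparing the Green's function of $D=-2(\Delta-2)^{-1}$ on $X$ with a disk kernel on an embedded $B(0,\iota_0)$, with off-diagonal and boundary errors decaying in $\iota_0$) is a genuinely hard analytic problem that you leave open, and the paper never touches it: Lemma \ref{lowolf}, a maximum-principle estimate valid on \emph{every} hyperbolic surface with no thickness hypothesis, gives the pointwise bound $D(|\mu|^2)\geq \frac{1}{3}|\mu|^2$, which reduces the numerator to $\frac{2}{3}\int_X|\mu|^4\,dA$; Cauchy--Schwarz on a single unit ball centered where $|\mu|$ attains its supremum, combined with Proposition \ref{lnubw}, then gives $\int_X|\mu|^4\,dA\geq \sup_X|\mu|^4/(C(1)^2\, Vol(B(0;1)))$. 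This is exactly Proposition \ref{ratio}, $K(\mu)\leq -C_0\sup_X|\mu|^4/\|\mu\|_{WP}^4$, and it is the sole source of $C_0$. Your route replaces a one-line pointwise inequality with an unresolved kernel-comparison argument.

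The second gap is the localization claim. You assert that the non-identity terms of the Poincar\'e series are negligible on the central ball because "the translates are far away," but controlling the full sum over $\Gamma_g$ is the other place real work happens. The paper's mechanism is Ahlfors' subharmonicity trick: $\sum_{\gamma\neq e}(1-|\gamma(z)|^2)^2$ is subharmonic off the orbit of $B(0,\tfrac{1}{\sqrt 2})$, so the mean-value inequality bounds its supremum by $\frac{25}{\pi}(\pi-\Area(F_g))$, which the thickness hypothesis forces below $\tfrac14$ (this is what determines $\iota_0$). That estimate --- not a ratio of concentration scales --- is the source of the numerics: taking $f\equiv 1$ (whose associated $\mu_0=(1-|z|^2)^2/4$ is \emph{not} concentrated in a unit ball, and no holomorphic $\varphi_0$ could make it so) one gets $|\mu_g(0)|\geq \tfrac14-\tfrac{1}{16}=\tfrac{3}{16}$ and $\|\mu_g\|_{WP}^2\leq \tfrac{5}{16}\pi$, whence $(3/16)^4/(5\pi/16)^2=\tfrac{81}{6400\pi^2}$. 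As written, your argument would at best yield \emph{some} universal negative upper bound rather than the stated constant, and only after both missing estimates are supplied.
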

This is the form of the theorem we shall prove in the final Section~\ref{sec:curv of hol lines} of this paper.

The expressions $C(1)$ refers to an explicit function $C(\inj(X))$ of the injectivity radius $\inj(X)$ which we shall display in Definition~\ref{defn:C-inj}. In addition, there is also a uniform lower bound for $K(\mu_X)$ which  is due to Huang in \cite{Huang} (see also Theorem \ref{lbfme-0}).

An almost immediate corollary of our method of proof refers to a sequence of surfaces $X_g$ of growing genus whose injectivity radii $\inj(X_g)\to \infty$ as $g \to \infty$.

\begin{corollary} \label{cor:bdd curv on seq}
Let $\{X_{g}\}$ be a sequence of hyperbolic surfaces whose injectivity radii satisfy $\lim_{g\to \infty}\inj(X_g)=\infty$. Then, there exists a uniform constant $E>0$ such that for $g$ large enough, the \wep holomorphic sectional curvatures satisfy
\[-\frac{2}{\pi}\leq\min_{\sigma_g \subset T_{X_g}\Teich(S_g)}K(\sigma_g)\leq -E<0\]
where the minimum runs over all the holomorphic lines in $T_{X_g}\Teich(S_g)$.
\end{corollary}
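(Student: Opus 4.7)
The plan is to treat the two bounds separately. The right-hand inequality, $\min_{\sigma_g} K(\sigma_g) \leq -E$, is essentially a repackaging of Theorem~\ref{ubfhsc}. Since $\inj(X_g) \to \infty$ by hypothesis, there exists $G$ so that $\inj(X_g) > \iota_0$ for every $g \geq G$, and Theorem~\ref{ubfhsc} then supplies an explicit $\mu_{X_g} \in T_{X_g}\Teich(S_g)$ with
\[
K(\mu_{X_g}) \;\leq\; \frac{-81\, C_0}{6400\,\pi^2}.
\]
Taking the minimum over all holomorphic lines and setting $E := \tfrac{81\,C_0}{6400\,\pi^2}$ gives the desired upper estimate with a constant $E$ that does not depend on $g$.

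For the lower bound $-\tfrac{2}{\pi} \leq \min_{\sigma_g} K(\sigma_g)$, I would work directly from the Tromba--Wolpert integral expression
\[
K(\mu) \;=\; \frac{-4 \int_{X_g} [(\Delta-2)^{-1}(|\mu|^2)]\cdot |\mu|^2 \, dA}{\bigl(\int_{X_g} |\mu|^2\, dA\bigr)^2}
\]
for an arbitrary harmonic Beltrami differential $\mu$, and combine it with two ingredients. First, because $\Delta$ is the nonnegative hyperbolic Laplacian, the operator $(\Delta-2)^{-1}$ has $L^2$-operator norm at most $\tfrac{1}{2}$; Cauchy--Schwarz then reduces the task to bounding the ratio $\int |\mu|^4 \,dA / (\int |\mu|^2\,dA)^2$. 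Second, since $\mu = \bar\varphi/\rho$ for a holomorphic quadratic differential $\varphi$, the function $|\mu|^2$ satisfies a submean-value estimate on hyperbolic balls of radius $\inj(X_g)$, yielding a pointwise control of the form
\[
\|\mu\|_\infty^2 \;\leq\; \frac{C(\inj(X_g))}{Vol_{\D}(B(0;\inj(X_g)))} \, \|\mu\|_2^2,
\]
with $C(\cdot)$ the function introduced in Definition~\ref{defn:C-inj}. Sending $\inj(X_g) \to \infty$ along the sequence drives this ratio to an explicit limit, and a careful bookkeeping of constants should then produce $|K(\mu)| \leq \tfrac{2}{\pi}$ for all sufficiently large $g$, uniformly across all harmonic Beltrami differentials on $X_g$.

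The main obstacle I foresee is the numerical tracking required to extract the exact constant $\tfrac{2}{\pi}$. A much weaker lower bound is essentially free from Theorem~\ref{largei-0}: since the holomorphic sectional curvature $K(\mu)$ appears as a diagonal value of $\tilde{Q}$ on the bivector associated to $\mu$, one has $K(\mu) \geq \lambda_{\min}(X_g) \geq -\tfrac{25}{\pi}$ once $g$ is large. The real work is sharpening this $-\tfrac{25}{\pi}$ down to the clean value $-\tfrac{2}{\pi}$, which hinges on the precise asymptotics of $C(r)$ and $Vol_{\D}(B(0;r))$ as $r \to \infty$ and on exploiting that the quadratic form relevant to $K$ is strictly more restrictive than the full spectral bound on $\tilde{Q}$.
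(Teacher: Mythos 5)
Your treatment of the upper bound is exactly the paper's argument: once $g$ is large enough that $\inj(X_g)>\iota_0$, Theorem~\ref{ubfhsc} supplies a direction with curvature at most $-81C_0/(6400\pi^2)$, and $E$ is taken to be that constant.

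For the lower bound, your overall strategy (curvature formula, spectral bound on the resolvent, sub-mean-value control of $|\mu|^2$) is also the paper's route --- it is precisely the content of Propositions~\ref{l4}, \ref{lnubw} and \ref{ratio} --- but the key inequality you wrote down is false. Proposition~\ref{lnubw} gives
\[
\sup_{z\in X_g}|\sigma_g(z)|^2\ \leq\ C(\inj(X_g))\int_{X_g}|\sigma_g|^2\,dA ,
\]
with no division by $Vol_{\D}(B(0;\inj(X_g)))$. Your version, carrying that extra factor, would make the right-hand side $o(1)\cdot\|\sigma_g\|_{WP}^2$ as $\inj(X_g)\to\infty$ (since $C(r)\to\frac{3}{4\pi}$ while the hyperbolic ball volume diverges), and would therefore force \emph{every} holomorphic sectional curvature to tend to zero --- contradicting the upper bound you just established. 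The ball volume enters only in the reverse inequality of Proposition~\ref{ratio}, where it contributes to the constant $C_0$ of the upper bound. A smaller slip: the bound $\|(\Delta-2)^{-1}\|_{L^2}\leq\tfrac12$ needs the Laplacian with nonpositive spectrum, as in Lemma~\ref{ufD}; with the nonnegative convention $\Delta-2$ may have spectrum arbitrarily close to $0$.

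Finally, the ``careful bookkeeping'' you anticipate is not actually required. The left inequality of Proposition~\ref{ratio} together with the corrected pointwise bound gives $-K(\sigma_g)\leq 2\sup_z|\sigma_g(z)|^2/\|\sigma_g\|_{WP}^2\leq 2C(\inj(X_g))$ for every holomorphic line, and $2C(\inj(X_g))\to\frac{3}{2\pi}<\frac{2}{\pi}$, so the stated bound holds uniformly for all large $g$; the constant $2/\pi$ is simply a clean number exceeding the limit $\frac{3}{2\pi}$, not a sharp value to be extracted. Your fallback via $K(\sigma_g)\geq\lambda_{\min}(X_g)\geq-\frac{25}{\pi}$ from Theorem~\ref{largei-0} is a correct but much weaker observation.
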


[We note that the above Theorem~\ref{ubfhsc} also proves that the minimal (most negative) eigenvalue of the Weil-Petersson curvature operator, restricted on sufficiently thick hyperbolic surface, is uniformly bounded away from zero. In that sense, this result then makes contact with, and is certainly consistent with, Theorem \ref{uubf-0}, but the two results are distinct, since our example here is restricted to hold on this special `sufficiently thick' region (and, furthermore, the asserted constants are different).]

\subsection{A remark on choice of normalizations.}
Some of the uniformity of the bounds that are independent of genus of course results from the conventions of fixing the uniformized metrics on the Riemann surfaces to all have curvature identically $-1$ (instead of, say, curvature identically $-g$), forcing the areas of the surfaces to grow linearly with $g$ in area.  Other conventions on representatives of the moduli space would translate into other bounds, but the ones we choose are the ones that seem most prevalent in the present literature.

\subsection{Discussion of the methods.} The methods for the two parts of the paper are distinct. In the first part, we begin with the formula of the Weil-Petersson curvature operator $\tilde{Q}$ in \cite{Wu14-co}: see equation \eqref{eqn:Q(A,A)decomposition}.  That formula exhibits $<\tilde{Q}(A), A>$ as  a sum of terms involving the operator $-2(\Delta-2)^{-1}$ and algebraic expressions in a holomorphic orthonormal basis of $T_{X}\Teich(S_g)$. By applying some fundamental properties of the operator $-2(\Delta-2)^{-1}$, we give lower and upper bounds for $<\tilde{Q}(A), A>$ in terms of, for our applications, quantities that only involve pointwise values of the holomorphic basis. Here we briefly outline our proofs of Theorem \ref{uubf-0}, Theorem \ref{lbfme-0} and Theorem~\ref{ubfhsc}.

For the proof of \textbf{Theorem \ref{uubf-0}}: By applying our upper bound for the curvature operator formula to the most symmetric element $A \in \wedge^2T_X\Teich(S_g)$ we show that (up to a positive constant) $<\tilde{Q}(A), A>$ is bounded above by the scalar curvature of the \wep metric at $X$, scaled by a factor of $\frac{1}{g}$. Then we apply Wolpert's upper bound of the scalar curvature which involves a factor of $g$ to finish the proof. 

For the proof of \textbf{Theorem \ref{lbfme-0}}: Choose an arbitrary element $B$ of unit length in $\wedge^2T_X\Teich(S_g)$. Our lower bound of $<\tilde{Q}(B),B>$ involves only terms that are products of our holomorphic basis, weighted by the coefficients of the element $B$ in that basis. Well-known bounds on terms like these then yield a bound in terms of those coefficients and an explicit function of the injectivity radius. 
The condition that $B$ is of unit norm then yields an expression that is independent of genus.  

For the proof of \textbf{Theorem~\ref{ubfhsc-0}}/\textbf{Theorem~\ref{ubfhsc}}: We choose a holomorphic line that corresponds to the image of a constant function on the disk under the $\Theta$-operator to the hyperbolic surface $X_g$, here thought of as the quotient of $\Hy^2$ under the action of a Fuchsian group $\Gamma_g$. The resulting Poincar\'e series has a term corresponding to the identity element of $\Gamma_g$ and a series of other `error' terms.  We adapt a method of Ahlfors \cite{Ahlfors64} to show that outside a large ball, the contribution of the error terms is bounded by a subharmonic function.   Taking the ball large enough, we can show that the contribution of the error terms, now estimated by its value on the boundary of the large ball -- in its role as the boundary of the region of subharmonicity -- can be made arbitrarily small. Thus only the term corresponding to the identity element of $\Gamma_g$ cannot be made arbitrarily small, and estimates of that term yield the desired bound.

\subsection{Plan of the paper.} Section \ref{s-np} provides some necessary background and the basic properties of the Weil-Petersson geometry of \tec space that we will need. After that preparatory section, the paper splits into two parts, each of which may be read independently of the other.  The first part treats our results on the \wep curvature operator, while the second part discusses the curvature of some holomorphic sections of the tangent bundle over very thick parts of \tec space. 

Part I begins with section \ref{s-coos} in which we estimate the formula of the Weil-Petersson curvature operator and restate the non-positivity of the Weil-Petersson curvature operator in terms of eigenvalues. Section \ref{s-ub} provides the upper bound Theorem~\ref{uubf-0} for the minimal eigenvalue. In section \ref{s-lb} we establish Theorem~\ref{lbfme-0}, the lower bound for the minimal eigenvalue in the thick part, and Thereom~\ref{ubfl^p}. We prove Theorem~\ref{largei-0} and Theorem~\ref{fftz-0} in section \ref{zpoev}. 

Part II spans two sections.  We begin in section~\ref{hmsc} with some estimates on harmonic Beltrami differentials, and then apply those estimates in section~\ref{sec:curv of hol lines} to prove Theorem~\ref{ubfhsc} and Corollary \ref{cor:bdd curv on seq}. \\

\begin{acknowledgement}
The authors would like to thank Maryam Mirzakhani, Hugo Parlier and Kasra Rafi for useful conversations. They also would like to thank Scott Wolpert for useful conversations and suggestions on part (2) of this article. They deeply thank Zheng (Zeno) Huang for bringing  Question \ref{qom} to their attention, his detailed comments for this article, and for his support over many years. The first author also gratefully acknowledges support
from the U.S.~National Science Foundation through grant DMS 1007383. This work was partially completed while the second author visited the Mathematical Sciences Center of Tsinghua university. Both authors acknowledge support from U.S. National Science Foundation grants DMS 1107452, 1107263, 1107367 ``RNMS: Geometric structures And Representation varieties"(the GEAR Network).
\end{acknowledgement}

\section{Notations and Preliminaries}\label{s-np} 

\subsection{Notation and Background on \wep curvatures.} \label{sec:wp background}
In this section, we set our notations and quickly review the relevant 
background material on the \wep metric and curvatures. We have two principal goals. For use in Part I, we prove Proposition~\ref{bfscalar}, which shows that the scalar curvatures are comparable to the $L^2$ norms of a pointwise Bergman sum of harmonic Beltrami differentials. For use in Part II, we prove Proposition~\ref{ratio}, which shows that the holomorphic sectional curvature along a harmonic Beltrami differential $\mu$ is estimated in terms of powers of its normalized $L^{\infty}$ norm.

To begin, recall that we denoted by $S_{g}$ a closed oriented surface of genus $g\geq 2$.  We may equip $S_g$ with a hyperbolic metric $\sigma(z)|dz|^2$, here written in a local conformal coordinate $z$ induced by the metric. Again, $\Te(S_g)$ is the Teichm\"uller space of surfaces of genus $g$, which we may construe as equivalence classes under the action of the group $\Diff_0$ of diffeomorphisms isotopic to the identity of the space of hyperbolic surfaces $X=(S_g,\sigma|dz|^2)$. The tangent space $T_X\Teich(S_g)$ at a point $X=(S_g,\sigma|dz|^2)$ is identified with the space of {\it harmonic Beltrami differentials} on $X$, i.e. forms on $X$ expressible as 
$\mu=\frac{\overline{\psi}}{\sigma}$ where $\psi \in H^0(X, K^2)$ is a holomorphic quadratic differential on $X$. Let $z=x+\textbf{i}y$ and $dA=\sigma(z)dxdy$ be the volume form. The \textit{Weil-Petersson metric} is the Hermitian
metric on $\Teich(S_g)$ arising from the the \textit{Petersson scalar  product}
\begin{equation}
 <\varphi,\psi>= \int_S \frac{\varphi \cdot \overline{\psi}}{\sigma^2}dA\nonumber
\end{equation}
via duality. We will concern ourselves primarily with its Riemannian part $g_{WP}$. Throughout this paper we denote the Teichm\"uller space endowed with the Weil-Petersson metric by $\Teich(S_g)$.\\ 

Set $D=-2(\Delta-2)^{-1}$ where $\Delta$ is the Beltrami-Laplace operator on $X=(S,\sigma|dz|^2)$. The operator $D$ is positive and self-adjoint. The following inequality follows from the maximum principle; see Lemma 5.1 in \cite{Wolf12} for details. 
\begin{lemma}[\cite{Wolf12}]\label{lowolf}
For any harmonic Beltrami differential $\mu$ on $X$, we have
\[D(|\mu|^2)\geq \frac{|\mu|^2}{3}.\]
\end{lemma}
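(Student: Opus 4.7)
The plan is to reduce the lemma to the pointwise differential inequality $\Delta|\mu|^2 \geq -4|\mu|^2$ and then invert. Once that bound is established, rewriting it as $(2-\Delta)|\mu|^2 \leq 6|\mu|^2$ and applying $D = -2(\Delta-2)^{-1} = 2(2-\Delta)^{-1}$ to both sides yields
\[2|\mu|^2 = D\bigl((2-\Delta)|\mu|^2\bigr) \leq 6\, D(|\mu|^2),\]
which after division by $6$ is exactly the desired inequality. For this last step I need that $D$ preserves pointwise order, and this follows from the standard maximum principle for $2-\Delta$ on the closed surface $X$: if $(2-\Delta)u \geq 0$, then at any point where $u$ attains its minimum one has $\Delta u \geq 0$, so $0 \leq (2-\Delta)u \leq 2u$ forces $u \geq 0$ there and hence everywhere. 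Applying this to $u = D(f) - D(g)$ with $f \geq g$ shows that $D$ is order-preserving.

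For the pointwise inequality, I would start from the Bochner-type identity $\Delta f = f\, \Delta \log f + |\nabla f|^2/f$, valid for any smooth positive $f$. Taking $f = |\mu|^2$ gives
\[\Delta |\mu|^2 = |\mu|^2\, \Delta \log |\mu|^2 + \frac{|\nabla |\mu|^2|^2}{|\mu|^2}.\]
Since $\mu = \bar\psi/\sigma$ with $\psi$ a holomorphic quadratic differential, away from the zeros of $\psi$ we may write $\log|\mu|^2 = \log|\psi|^2 - 2\log\sigma$; the first summand is harmonic (as $\psi$ is holomorphic, so $\partial_z\partial_{\bar z}\log|\psi|^2 = 0$) while the Liouville equation for the curvature $-1$ hyperbolic metric $\sigma$ gives $\Delta \log \sigma = 2$. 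Hence $\Delta \log |\mu|^2 = -4$ on the complement of the (isolated) zero locus of $\mu$, and dropping the nonnegative gradient term in the displayed identity yields $\Delta |\mu|^2 \geq -4|\mu|^2$ there.

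The only subtlety is extending the inequality across the finitely many zeros of $\psi$, where the Bochner identity breaks down. At such a zero $p$ one has $|\mu|^2(p) = 0$, so the right-hand side of the target inequality vanishes, while $p$ is a local minimum of the smooth nonnegative function $|\mu|^2$ and so $\Delta|\mu|^2(p) \geq 0$ automatically. Thus the pointwise inequality $\Delta|\mu|^2 \geq -4|\mu|^2$ holds on all of $X$, and no genuinely hard step appears in the argument; the particular constant $1/3$ emerges transparently from the interaction between the curvature normalization (contributing the $-4$ in $\Delta\log|\mu|^2$) and the shift $\Delta - 2$ built into the definition of $D$.
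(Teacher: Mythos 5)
Your proof is correct and follows essentially the argument the paper points to: the paper gives no proof of this lemma, only the remark that it ``follows from the maximum principle'' together with a citation of Lemma 5.1 of \cite{Wolf12}, and that cited argument is exactly the one you have written out (the pointwise bound $\Delta|\mu|^2\geq -4|\mu|^2$ from harmonicity of $\log|\psi|^2$ and the Liouville equation, followed by positivity of $(2-\Delta)^{-1}$ via the maximum principle). The handling of the zeros of $\psi$ and the order-preservation of $D$ are both done correctly, so nothing is missing.
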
   

The following property is well-known to experts -- see for example Lemma~4.3 in \cite{LSY04}. For completeness, we include the proof.
\begin{lemma}\label{ufD} 
Let $D$ be the operator above. Then, for any complex-valued function $f\in C^{\infty}(X)$,
\[0\leq \int_{X}{(D(f)\overline{f})dA}\leq \int_{X}{|f|^2dA}.\]
\end{lemma}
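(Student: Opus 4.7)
The operator $D=-2(\Delta-2)^{-1}$ is constructed to be self-adjoint and positive on $L^{2}(X,dA)$, so my plan is to prove both inequalities simultaneously by diagonalizing $D$ through the spectral decomposition of $\Delta$.

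Since $X$ is a closed Riemannian surface, $-\Delta$ is a non-negative self-adjoint elliptic operator that admits a complete orthonormal basis of smooth real eigenfunctions $\{\phi_{i}\}_{i\geq 0}\subset L^{2}(X,dA)$ with eigenvalues satisfying $\Delta\phi_{i}=-\lambda_{i}\phi_{i}$ and $0=\lambda_{0}<\lambda_{1}\leq\lambda_{2}\leq\cdots\to\infty$. Then the same basis diagonalizes $D$, with
\[
D\phi_{i}=\frac{2}{\lambda_{i}+2}\phi_{i},
\]
and every eigenvalue $\frac{2}{\lambda_{i}+2}$ lies in the interval $(0,1]$, the maximum being achieved by the constants.

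Next, I would expand the given $f\in C^{\infty}(X)$ as $f=\sum_{i}a_{i}\phi_{i}$ with $a_{i}=\int_{X}f\phi_{i}\,dA\in\C$ (the basis is real, so this makes sense for complex $f$), and apply Parseval's identity. Since $Df=\sum_{i}\frac{2}{\lambda_{i}+2}a_{i}\phi_{i}$, orthonormality yields
\[
\int_{X}D(f)\overline{f}\,dA=\sum_{i}\frac{2}{\lambda_{i}+2}|a_{i}|^{2},\qquad \int_{X}|f|^{2}\,dA=\sum_{i}|a_{i}|^{2}.
\]
Because $0<\frac{2}{\lambda_{i}+2}\leq 1$ for every $i$, the first sum is bounded termwise between $0$ and the second, which delivers both halves of the stated inequality at once.

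There is no real obstacle here; the only points demanding care are the sign convention for $\Delta$ (the paper treats $-\Delta$ as non-negative, consistent with Lemma~\ref{lowolf} and the positivity of $D$), and the observation that $\int_{X}D(f)\overline{f}\,dA$ is automatically real because $D$ is a real self-adjoint operator, so the stated bounds make sense as written. If one preferred to avoid spectral theory entirely, a direct alternative is to set $u=Df$, so that $f=-2\Delta u+4u$, and then integration by parts twice on the closed surface gives the identities
\[
\int_{X}D(f)\overline{f}\,dA=2\int_{X}|\grad u|^{2}\,dA+4\int_{X}|u|^{2}\,dA,
\]
\[
\int_{X}|f|^{2}\,dA=4\int_{X}|\Delta u|^{2}\,dA+16\int_{X}|\grad u|^{2}\,dA+16\int_{X}|u|^{2}\,dA,
\]
whose difference is a manifest sum of non-negative terms, yielding both inequalities. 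Either route is short, but the spectral argument is the cleanest.
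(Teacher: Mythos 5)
Your main (spectral) argument is essentially identical to the paper's proof: the authors likewise expand $f$ in Laplace eigenfunctions, observe that $D$ acts by the factors $\tfrac{2}{2-\mu_i}\in(0,1]$, and conclude termwise (handling complex $f$ by splitting into real and imaginary parts, which your complex-coefficient Parseval version streamlines). The only quibble is in your optional integration-by-parts alternative: from $u=Df$ with $D=-2(\Delta-2)^{-1}$ one gets $f=-\tfrac12(\Delta-2)u=-\tfrac12\Delta u+u$, not $-2\Delta u+4u$, though after correcting the constants the difference of the two integrals is still a manifest sum of non-negative terms and the conclusion is unaffected.
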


\begin{proof}
Let $g=D(f)$, so that $f=\frac{-1}{2}(\Delta-2)g$. The left inequality follows directly by integrating by parts.

For the right inequality, begin by assuming that $f$ is real-valued. In that case decompose $f=\sum_{i=0}^{\infty}\phi_{i}$ as a linear combination of eigenfunctions of the Laplacian: here $\phi_{i}$ satisfies $\int_{X}(\phi_{i}\phi_j)dA=0$ for all $i\neq j$ and $\Delta \phi_{i}=\mu_{i}\phi_{i}$, where $\mu_{i}< 0$ is the $i$-th eigenvalue of  the Beltrami-Laplace operator $\Delta$. Since $\int_{X}(\phi_{i}\phi_j)dA=0$ for all $i\neq j$, we have
\begin{eqnarray*}
\int_{X}{(D(f)f)dA} &=& \int_{X}{(\sum_{i=0}^{\infty} \frac{2\phi_{i}}{2-\mu_{i}})(\sum_{i=0}^{\infty}\phi_{i})dA}\\
&=& \int_{X}{\sum_{i=0}^{\infty}\frac{2}{2-\mu_{i}}|\phi_{i}|^2dA}\\
&\leq&  \int_{X}{\sum_{i=0}^{\infty} |\phi_{i}|^2 dA}\\
&=&\int_{X}{f^2dA}.
\end{eqnarray*}
If $f$ is complex-valued, one applies the same method to the real and imaginary parts of $f$ separately, along with a standard use of the self-adjointness of $D$.
\end{proof}

We conclude this discussion of the operator $D=-2(\Delta-2)^{-1}$ by recording some elementary properties of its Green's function. 
\begin{proposition} \label{psfg}
Let $D$ be the operator $D=-2(\Delta-2)^{-1}$. Then there exists a Green function $G(w,z)$ for $D$ satisfying:\\
(1). $D(f)(z)=\int_{w \in X}{G(z,w)f(w)dA(w)}$ for any $f\in C^{\infty}((X,\sigma|dz|^2),\mathbb{C})$.\\
(2). $G(w,z)$ is positive.\\
(3). $G(w,z)$ is symmetric, i.e, $G(w,z)=G(z,w)$.
\end{proposition}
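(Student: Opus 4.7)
The plan is to build $G$ from standard elliptic theory on the closed Riemannian surface $(X, \sigma|dz|^2)$, and to read off each property in turn from a spectral or maximum principle argument.

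Since $-\Delta$ is a nonnegative self-adjoint elliptic operator on the closed surface $X$, it admits a real $L^{2}$-orthonormal eigenbasis $\{\phi_i\}$ with eigenvalues $-\mu_i \geq 0$. Consequently $2 - \Delta$ has spectrum bounded below by $2$ and is invertible, and $D = 2(2-\Delta)^{-1}$ is a bounded self-adjoint operator on $L^{2}(X, dA)$. Classical elliptic theory, equivalently Hadamard's parametrix construction for a second order elliptic operator on a surface, produces a distributional kernel $G(z,w)$ on $X \times X$, smooth off the diagonal, with a logarithmic singularity $G(z,w) \sim -\tfrac{1}{\pi}\log\dist(z,w)$ as $z \to w$, and solving $(2-\Delta_z)G(z,w) = 2 \delta_w(z)$. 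This yields assertion (1). The symmetry (3) is then equivalent to the self-adjointness of $D$, which follows from that of $\Delta$; it can also be read directly off the eigenfunction expansion $G(z,w) = \sum_i \frac{2}{2-\mu_i} \phi_i(z) \phi_i(w)$ interpreted distributionally.

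The heart of the proof is the positivity assertion (2), which I would extract from the strong maximum principle for the Schr\"odinger-type operator $-\Delta + 2$. Fix $w \in X$ and set $u(z) := G(z,w)$. Away from the diagonal $u$ is smooth with $\Delta u = 2u$, and $u(z) \to +\infty$ as $z \to w$, so for sufficiently small $\epsilon > 0$ the infimum of $u$ on $X \setminus \{w\}$ coincides with the infimum on the compact set $X \setminus B_{\epsilon}(w)$ and is attained at some interior point $z_0 \neq w$. At $z_0$ we have $\Delta u(z_0) \geq 0$, so $u(z_0) = \tfrac{1}{2}\Delta u(z_0) \geq 0$, giving $G \geq 0$ on all of $X \times X$. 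To rule out vanishing off the diagonal, suppose $G(z_0, w) = 0$ for some $z_0 \neq w$; then $v := -u$ satisfies $(-\Delta + 2) v = 0$ on $X \setminus \{w\}$, is everywhere $\leq 0$, and attains its interior maximum value $0$ at $z_0$. Hopf's strong maximum principle for $-\Delta + 2$ then forces $v \equiv 0$ on the connected set $X \setminus \{w\}$, contradicting the logarithmic blow-up of $u$ at $w$. Hence $G(z,w) > 0$ for every $z \neq w$.

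I expect the only genuinely non-routine step to be the positivity assertion, both because it requires the strong maximum principle (not merely the weak one) and because the singularity on the diagonal must be handled carefully in order to guarantee that the infimum is attained at an interior point; assertions (1) and (3) follow directly from the construction and the self-adjointness of $D$.
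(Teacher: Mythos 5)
Your proof is correct, but it is worth knowing that the paper does not actually prove this proposition: its ``proof'' is a pointer to Roelcke and to Wolpert's curvature paper, where the Green's function of $D=-2(\Delta-2)^{-1}$ is obtained as the resolvent kernel of the hyperbolic Laplacian. In that classical route one builds $G$ explicitly as a Poincar\'e series $\sum_{\gamma\in\Gamma}g_{\mathbb{H}^2}(z,\gamma w)$ over the free resolvent kernel on $\mathbb{H}^2$ (a Legendre function of the point-pair invariant), from which positivity, symmetry, and even the precise asymptotics of $G$ near and far from the diagonal are read off term by term; those finer asymptotics are what Wolpert exploits elsewhere. Your argument is intrinsic and more elementary: existence and symmetry from the spectral theorem and a parametrix, and positivity from the strong maximum principle applied to $(-\Delta+2)$ on $X\setminus\{w\}$. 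The sign conventions check out against the paper ($\Delta$ has nonpositive spectrum, so $\Delta u\geq 0$ at an interior minimum, giving $u(z_0)=\tfrac12\Delta u(z_0)\geq 0$), the normalization $G\sim-\tfrac{1}{\pi}\log\dist(z,w)$ is consistent with $(2-\Delta_z)G=2\delta_w$ and with $D(1)=1$, and your handling of the attainment of the infimum via the logarithmic blow-up at $w$ is the one genuinely delicate point and is handled correctly. The trade-off is the usual one: your proof is self-contained and works for any closed Riemannian surface, while the cited construction yields quantitative information about $G$ beyond the three qualitative properties asserted here.
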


\begin{proof}
See for example \cite{Roelcke} and \cite{Wol86}.
\end{proof}

\subsection{The Riemannian tensor of the Weil-Petersson metric.} The curvature tensor of the \wep metric is given as follows. As described in the opening paragraph of section~\ref{sec:wp background}, let $\mu_{i},\mu_{j}$ be two elements in the tangent space $T_X\Teich(S_g)$ at $X$, so that the metric tensor might be written in local coordinates as
\begin{eqnarray*}
g_{i \overline{j}}=\int_X \mu_{i} \cdot  \overline{\mu_j} dA.
\end{eqnarray*} 

For the inverse of $(g_{i\overline{j}})$, we use the convention
\begin{eqnarray*}
g^{i\overline{j}} g_{k\overline{j}}=\delta_{ik}.
\end{eqnarray*}

Then the curvature tensor is given by
\begin{eqnarray*}
R_{i\overline{j}k\overline{l}}=\frac{\partial^2}{\partial t^{k}\partial \overline{t^{l}}}g_{i\overline{j}}-g^{s\overline{t}}\frac{\partial}{\partial t^{k}}g_{i\overline{t}}\frac{\partial}{\partial \overline{t^{l}}}g_{s\overline{j}}.
\end{eqnarray*}

The following curvature formula was established in \cite{Tromba86, Wol86}. One can also see \cite{Jost91} for a derivation from a third perspective. It has been applied to study various curvature properties of the Weil-Petersson metric. In \cite{Schumacher} Schumacher showed that $\Teich(S_g)$ has strongly negative curvature in the sense of Siu. Huang in his thesis \cite{Huang1} showed that the sectional curvatures of $\Teich(S_g)$ can not be bounded away from zero. Liu-Sun-Yau in \cite{LSY} showed that $\Teich(S_g)$ has dual Nakano negative curvature, which says that the complex curvature operator on the dual tangent bundle is positive in some sense. Motivated by the method in \cite{LSY}, the second author in \cite{Wu14-co} showed that the $\Teich(S_g)$ has negative semi-definite Riemannian curvature operator. One can also see \cite{ LSY04, LSY05, LSYY, Teo08, Wol08, Wolpert5} for other aspects of the curvature of $\Teich(S_g)$.

\begin{theorem}[Tromba, Wolpert]\label{cfow} 
The curvature tensor satisfies

\[R_{i\overline{j}k\overline{l}}=\int_{X} D(\mu_{i}\mu_{\overline{j}})\cdot (\mu_{k}\mu_{\overline{l}}) dA+\int_{X} D(\mu_{i}\mu_{\overline{l}})\cdot (\mu_{k}\mu_{\overline{j}}) dA.\]

\end{theorem}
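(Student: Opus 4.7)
The plan is to compute the two derivatives in the coordinate formula
\[R_{i\bar j k\bar l} = \partial_{t^k}\partial_{\bar t^l} g_{i\bar j} - g^{s\bar t}\,\partial_{t^k} g_{i\bar t}\,\partial_{\bar t^l} g_{s\bar j}\]
at a fixed point $X$, exploiting a natural holomorphic slice through $X$ produced by the harmonic Beltrami differentials themselves. Fix a unitary basis $\{\mu_1,\dots,\mu_{3g-3}\}$ of $T_X\Teich(S_g)$, set $\mu(t)=\sum_i t^i\mu_i$, and let $f^{\mu(t)}\colon X\to X_t$ be the normalized quasiconformal map solving $f_{\bar z}=\mu(t)f_z$. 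Pulling back the hyperbolic metric of $X_t$ by $f^{\mu(t)}$ produces a family of positive densities $\sigma(t,\bar t;z)$ on the fixed surface $X$ with $\sigma(0,0;z)=\sigma(z)$.

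The analytic heart of the argument is the variational formula for $\sigma(t,\bar t;z)$. Differentiating the constant curvature identity $\Delta\log\sigma=2$ in the parameters $t$, the first-order variations $\partial_{t^i}\log\sigma|_0$ vanish because the $\mu_i$ are harmonic, i.e.\ because $\bar\partial(\sigma\mu_i)=0$. Hence the only nonzero second-order information is encoded in
\[\Lambda_{k\bar l}(z):=\partial_{t^k}\partial_{\bar t^l}\log\sigma\,\big|_{t=0},\]
which, after a short computation on the linearization of the curvature equation, satisfies
\[(\Delta-2)\Lambda_{k\bar l}=-2\,\mu_k\overline{\mu_l}.\]
This identifies $\Lambda_{k\bar l}=D(\mu_k\overline{\mu_l})$ with $D=-2(\Delta-2)^{-1}$, which is precisely the operator appearing in the target formula.

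Next, I would expand $g_{i\bar j}(t)=\int_X \mu_i(t)\,\overline{\mu_j(t)}\,dA_t$ to second order in $t$. The area element $dA_t=\sigma(t,\bar t;z)\,dx\,dy$ contributes $\int_X\Lambda_{k\bar l}\,\mu_i\overline{\mu_j}\,dA$ to $\partial_{t^k}\partial_{\bar t^l}g_{i\bar j}|_0$, and a parallel contribution $\int_X\Lambda_{k\bar j}\,\mu_i\overline{\mu_l}\,dA$ arises from the symmetric second-order adjustment needed to keep $\mu_j(t)$ harmonic on $X_t$. The pure first-derivative piece $g^{s\bar t}\partial_{t^k}g_{i\bar t}\,\partial_{\bar t^l}g_{s\bar j}$ is exactly the $L^2$-projection onto harmonic Beltrami differentials of the (non-harmonic) product $\mu_k\overline{\mu_l}$, and this projection cancels against the corresponding non-harmonic residue left over in $\partial_{t^k}\partial_{\bar t^l}g_{i\bar j}|_0$. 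Invoking the self-adjointness of $D$ (Proposition~\ref{psfg}(3)) to rewrite
\[\int_X \Lambda_{k\bar l}\,\mu_i\overline{\mu_j}\,dA=\int_X D(\mu_i\overline{\mu_j})\,(\mu_k\overline{\mu_l})\,dA,\]
and similarly for the paired term, assembles the stated formula.

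The main obstacle, in my view, is not the derivation of the PDE for $\Lambda_{k\bar l}$ (which is a direct linearization) but rather the careful bookkeeping of the first-derivative pieces: one has to construct the holomorphic slice so that $\partial_{t^k}g_{i\bar j}|_0$ has a clean expression, identify the Christoffel-squared term with an $L^2$-projection onto harmonic Beltrami differentials, and verify that this projection cancels exactly with the non-harmonic remainder from the second variation of the area element. Pinning down the sign and factor of $2$ in $(\Delta-2)\Lambda_{k\bar l}=-2\mu_k\overline{\mu_l}$ is what fixes the final coefficients in front of the two integrals, so this is the step most in need of scrupulous care.
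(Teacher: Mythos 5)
You should first be aware that the paper does not prove this statement: Theorem~\ref{cfow} is quoted from Tromba \cite{Tromba86} and Wolpert \cite{Wol86}, which are cited in the surrounding text, so there is no internal argument to measure yours against. Your outline does follow the architecture of Wolpert's original derivation --- deform along $\mu(t)=\sum_i t^i\mu_i$, use harmonicity to kill the first variation of the hyperbolic metric, and extract $D=-2(\Delta-2)^{-1}$ from the linearized constant-curvature equation --- so the strategy is the standard and correct one.

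As a proof, however, the proposal has genuine gaps, concentrated exactly at the points you flag as needing care. First, the handling of the Christoffel-squared term is not right as stated: in the coordinates you construct, the classical fact (Ahlfors \cite{Ahlfors}) is that \emph{all} first derivatives $\partial_{t^k}g_{i\bar{j}}$ vanish at the center, so the term $g^{s\bar{t}}\partial_{t^k}g_{i\bar{t}}\,\partial_{\bar{t}^l}g_{s\bar{j}}$ is simply zero at $X$ and no cancellation against a ``non-harmonic residue'' takes place; if you decline to invoke Ahlfors' lemma, you must actually exhibit the claimed cancellation, which you do not. Second, the entire second summand $\int_X D(\mu_i\mu_{\overline{l}})\cdot(\mu_k\mu_{\overline{j}})\,dA$ is attributed to ``the symmetric second-order adjustment needed to keep $\mu_j(t)$ harmonic,'' but producing that term is the substance of the computation, and it is only asserted. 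Third, the identity $(\Delta-2)\Lambda_{k\bar{l}}=-2\mu_k\overline{\mu_l}$ is stated without derivation, and the second mixed variation of the pulled-back area element in fact carries an additional algebraic contribution proportional to $\mu_k\overline{\mu_l}\,dA$ coming from the Jacobian factor $|f_z|^2(1-|\mu|^2)$ of the quasiconformal map; tracking whether that term cancels or contributes is precisely what fixes the coefficients in the final formula, and your $\Lambda_{k\bar{l}}$ alone does not account for it. In short, this is an accurate road map to the Tromba--Wolpert proof with the decisive computations deferred, rather than a proof.
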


\subsubsection{\wep Scalar Curvature.}
Let $\{\mu_{i}\}_{i=1}^{3g-3}$ be a holomorphic orthonormal basis representing the tangent space $T_X\Teich(S_g)$ at $X$. Then the Ricci curvature of $\Teich(S_g)$ at $X$ in the direction $\mu_{i}$ is given by
\begin{eqnarray*}
&&Ric(\mu_{i})=-\sum_{j=1}^{3g-3}R_{i\overline{j}j\overline{i}}\\
&=&-\sum_{j=1}^{3g-3}(\int_{X} D(\mu_{i}\mu_{\overline{j}})\cdot (\mu_{j}\mu_{\overline{i}}) dA+\int_{X} D(|\mu_{i}|^2)\cdot (|\mu_{j}|^2) dA).
\end{eqnarray*}

Since the scalar curvature $\Sca(X)$ at $X$ is the trace of the Ricci tensor, we find we may express the scalar curvature as  
\begin{equation} \label{eqn:scalar curvature} 
\Sca(X)=-\sum_{i=1}^{3g-3}\sum_{j=1}^{3g-3}(\int_{X} D(\mu_{i}\mu_{\overline{j}})\cdot (\mu_{j}\mu_{\overline{i}}) dA+\int_{X} D(|\mu_{i}|^2)\cdot (|\mu_{j}|^2) dA).
\end{equation}

From this expression for the scalar curvature, we may apply our estimates for the operator $D$ in section~\ref{sec:wp background} to obtain estimates for the scalar curvature from above and below in terms of pointwise-defined quantities.
\begin{proposition}\label{bfscalar}
For any $X \in \Teich(S_g)$, the scalar curvature $\Sca(X)$ at $X$ satisfies 

\begin{equation*}
 -2\int_{X}{(\sum_{i=1}^{3g-3}|\mu_{i}|^2)^2}dA \leq \Sca(X)\leq -\frac{1}{3}\int_{X}{(\sum_{i=1}^{3g-3}|\mu_{i}|^2)^2dA}
\end{equation*}
where $\{\mu_{i}\}_{i=1}^{3g-3}$ is any holomorphic orthonormal basis of the tangent space at $X$.
\end{proposition}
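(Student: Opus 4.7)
The plan is to bound each of the two double sums in the scalar-curvature expression \eqref{eqn:scalar curvature} separately, above and below, by multiples of $\int_X u^2\,dA$, where I write $u:=\sum_{i=1}^{3g-3}|\mu_i|^2$ for the pointwise Bergman sum. Both bounds come from the two lemmas already established for the operator $D=-2(\Delta-2)^{-1}$, namely Lemma~\ref{lowolf} and Lemma~\ref{ufD}; no further machinery is needed.

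For the diagonal-type double sum $\sum_{i,j}\int_X D(|\mu_i|^2)|\mu_j|^2\,dA$, the key observation is that, because $D$ is linear, this sum collapses to a single integral
\[
\sum_{i,j}\int_X D(|\mu_i|^2)|\mu_j|^2\,dA \;=\; \int_X D(u)\cdot u\,dA.
\]
Now I apply Lemma~\ref{ufD} with $f=u$ to obtain the upper bound $\int_X D(u)u\,dA\leq\int_X u^2\,dA$, and, summing the pointwise inequality of Lemma~\ref{lowolf} over $i$ to get $D(u)\geq u/3$ and then integrating against $u\geq 0$, the lower bound $\int_X D(u)u\,dA\geq \tfrac{1}{3}\int_X u^2\,dA$.

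For the off-diagonal sum $\sum_{i,j}\int_X D(\mu_i\overline{\mu_j})\cdot\mu_j\overline{\mu_i}\,dA$, I would set $f_{ij}:=\mu_i\overline{\mu_j}$ and note that $\mu_j\overline{\mu_i}=\overline{f_{ij}}$, so each term has the form $\int_X D(f_{ij})\overline{f_{ij}}\,dA$. Lemma~\ref{ufD} then gives both $\int_X D(f_{ij})\overline{f_{ij}}\,dA\geq 0$ and $\int_X D(f_{ij})\overline{f_{ij}}\,dA\leq \int_X|f_{ij}|^2\,dA=\int_X|\mu_i|^2|\mu_j|^2\,dA$. Summing over $i,j$ yields $0\leq \sum_{i,j}\int_X D(\mu_i\overline{\mu_j})\mu_j\overline{\mu_i}\,dA\leq \int_X u^2\,dA$. (One may alternatively make non-negativity manifest via Proposition~\ref{psfg} by rewriting the full sum as $\int_X\!\int_X G(z,w)\,|h(w,z)|^2\,dA(w)\,dA(z)$ with $h(w,z):=\sum_i\mu_i(w)\overline{\mu_i(z)}$, but this is not needed for the stated estimate.)

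Adding the two contributions and recalling that $Sca(X)$ is the negative of their sum produces exactly $\tfrac{1}{3}\int_X u^2\,dA\leq -Sca(X)\leq 2\int_X u^2\,dA$, which is the proposition. There is no real obstacle here; the only step that might be missed is the linearity collapse of the diagonal sum to $\int_X D(u)u\,dA$, which is what allows a clean two-sided estimate with the sharp constants $\tfrac{1}{3}$ and $2$ coming directly from Lemmas~\ref{lowolf} and~\ref{ufD}.
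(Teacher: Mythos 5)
Your proposal is correct and follows essentially the same route as the paper: the lower bound on $-Sca(X)$ comes from dropping the non-negative off-diagonal sum (left inequality of Lemma~\ref{ufD}) and applying Lemma~\ref{lowolf} to the diagonal sum, while the upper bound comes from applying the right inequality of Lemma~\ref{ufD} to $f=\mu_i\overline{\mu_j}$ termwise and, after the linearity collapse, to $f=\sum_i|\mu_i|^2$. The only cosmetic difference is that the paper applies Lemma~\ref{lowolf} termwise before summing rather than after collapsing to $\int_X D(u)\,u\,dA$, which changes nothing.
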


\begin{proof}
We begin with the right-hand-side inequality. By Lemma \ref{lowolf}, the first term in the expression \eqref{eqn:scalar curvature} has a sign, so that we can then apply Lemma \ref{ufD} to find
\begin{align*}
-\Sca(X)&=\sum_{i=1}^{3g-3}\sum_{j=1}^{3g-3}(\int_{X} D(\mu_{i}\mu_{\overline{j}})\cdot (\mu_{j}\mu_{\overline{i}}) dA+\int_{X} D(|\mu_{i}|^2)\cdot (|\mu_{j}|^2) dA)\\
&\geq\sum_{i=1}^{3g-3}\sum_{j=1}^{3g-3}\int_{X} D(|\mu_{i}|^2)\cdot (|\mu_{j}|^2) dA \\
&\geq \frac{1}{3}\int_{X}{(\sum_{i=1}^{3g-3}|\mu_{i}|^2)^2}dA.
\end{align*}

For the left hand side inequality, we use the right side of the estimate in Lemma \ref{ufD} twice, with the preliminary step of bringing the sum into the integrands. We compute 
\begin{eqnarray*}
-\Sca(X)&=&\sum_{i=1}^{3g-3}\sum_{j=1}^{3g-3}(\int_{X} D(\mu_{i}\mu_{\overline{j}})\cdot (\mu_{j}\mu_{\overline{i}}) dA+\int_{X} D(|\mu_{i}|^2)\cdot (|\mu_{j}|^2) dA)\\
&=&\sum_{i=1}^{3g-3}\sum_{j=1}^{3g-3}\int_{X} D(\mu_{i}\mu_{\overline{j}})\cdot (\mu_{j}\mu_{\overline{i}}) dA+\int_{X} \sum_{i=1}^{3g-3}\sum_{j=1}^{3g-3}D(|\mu_{i}|^2)\cdot (|\mu_{j}|^2) dA\\
&\leq&\sum_{i=1}^{3g-3}\sum_{j=1}^{3g-3}\int_{X} (\mu_{i}\mu_{\overline{j}})\cdot (\mu_{j}\mu_{\overline{i}}) dA+\int_{X}D(\sum_{i=1}^{3g-3}|\mu_{i}|^2)\cdot (\sum_{j=1}^{3g-3}|\mu_{j}|^2) dA\\
&=&\sum_{i=1}^{3g-3}\sum_{j=1}^{3g-3}\int_{X} |\mu_{i}|^2 |\mu_{j}|^2 dA+\int_{X} D(\sum_{i=1}^{3g-3}|\mu_{i}|^2)\cdot (\sum_{i=1}^{3g-3}|\mu_{i}|^2) dA\\
&\leq & 2\int_{X}{(\sum_{i=1}^{3g-3}|\mu_{i}|^2)^2dA}.
\end{eqnarray*}
\end{proof}

\begin{remark}\label{ubfs}
The Cauchy-Schwarz inequality applied to the right hand side estimate in Proposition~\ref{bfscalar} leads to a numerical lower bound for $-\Sca(X)$ in the following way:
\begin{eqnarray*}
-\Sca(X)&\geq& \frac{\int_{X}{(\sum_{i=1}^{3g-3}|\mu_{i}|^2)^2}dA}{3}\\
&\geq& \frac{(\int_{X}{(\sum_{i=1}^{3g-3}|\mu_{i}|^2)}dA)^2}{3Area(S)}=\frac{3(g-1)}{4\pi}.
\end{eqnarray*} 
Now, without using Lemma \ref{lowolf}, Wolpert in \cite{Wol86} proved a better lower bound as $-\Sca(X)\geq \frac{3(3g-2)}{4\pi}$ by expanding $\sum_{i=1}^{3g-3}|\mu_{i}|^2$ (and $D(\sum_{i=1}^{3g-3}|\mu_{i}|^2)$) in the definition \eqref{eqn:scalar curvature} in terms of eigenfunctions of $\Delta$ at $X$. On the other hand, our goal in Proposition \ref{bfscalar} is to estimate $-\Sca(X)$ is estimated by the integral $\int_{X}{(\sum_{i=1}^{3g-3}|\mu_{i}|^2)^2}dA$, an expression we will apply later, so we content ourselves with the bound above. 
\end{remark}

\subsubsection{Weil-Petersson holomorphic sectional curvatures.}\label{subsec:hol-sec-curv}
Recall the holomorphic sectional curvature is a sectional curvature along a holomorphic line. Let $\mu \in T_X\Teich(S_g)$. Then Theorem \ref{cfow} tells that the holomorphic sectional curvature $K(\mu)$ along the holomorphic line spanned by $\mu$ is

\[K(\mu)=\frac{-2\cdot \int_{X} D(|\mu|^2)\cdot (|\mu|^2) dA}{||\mu||_{WP}^4}.\]

From Lemma \ref{lowolf}, Lemma \ref{ufD} and the equation above we have
\begin{proposition}\label{l4}
For any $\mu \in T_X\Teich(S_g)$, the holomorphic sectional curvature $K(\mu)$ satisfies
\[ -\frac{2\cdot \int_{X} |\mu|^4 dA}{||\mu||_{WP}^4}\leq K(\mu)\leq -\frac{2\cdot \int_{X} |\mu|^4 dA}{3||\mu||_{WP}^4}.\]
\end{proposition}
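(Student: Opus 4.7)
The plan is to read both bounds directly off the displayed formula
\[K(\mu)=\frac{-2\cdot \int_{X} D(|\mu|^2)\cdot (|\mu|^2) dA}{||\mu||_{WP}^4}\]
by sandwiching the integral $\int_{X} D(|\mu|^2)\cdot |\mu|^2\,dA$ between $\tfrac{1}{3}\int_X |\mu|^4\,dA$ and $\int_X |\mu|^4\,dA$. Since the formula has an overall minus sign and a positive denominator, a lower bound on this integral produces the upper bound on $K(\mu)$ and vice versa.

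For the upper bound on $K(\mu)$, I would invoke Lemma \ref{lowolf}, which gives the pointwise inequality $D(|\mu|^2)\geq \tfrac{1}{3}|\mu|^2$. Multiplying by the nonnegative function $|\mu|^2$ and integrating over $X$ preserves this inequality, yielding
\[\int_X D(|\mu|^2)\cdot |\mu|^2\,dA \;\geq\; \tfrac{1}{3}\int_X |\mu|^4\,dA,\]
which, after inserting into the formula for $K(\mu)$, produces the required upper bound $K(\mu)\leq -\tfrac{2\int_X |\mu|^4\,dA}{3\,\|\mu\|_{WP}^4}$.

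For the lower bound on $K(\mu)$, I would apply Lemma \ref{ufD} with the (real, nonnegative) test function $f=|\mu|^2$. The right-hand inequality of that lemma gives
\[\int_X D(|\mu|^2)\cdot |\mu|^2\,dA \;\leq\; \int_X \bigl(|\mu|^2\bigr)^2 dA \;=\; \int_X |\mu|^4\,dA.\]
Substituting into the expression for $K(\mu)$ and using the minus sign then yields $K(\mu)\geq -\tfrac{2\int_X|\mu|^4\,dA}{\|\mu\|_{WP}^4}$.

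There is no real obstacle here: the proposition is essentially a one-line corollary of the two preparatory lemmas applied to the Tromba--Wolpert formula for the holomorphic sectional curvature specialized to a single direction $\mu$. The only small caveat to flag is that Lemma \ref{ufD} is stated for complex-valued $C^\infty$ functions, so I would briefly note that $|\mu|^2$ is smooth and real-valued, hence admissible.
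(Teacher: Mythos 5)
Your proof is correct and is exactly the argument the paper intends: the paper states Proposition \ref{l4} as an immediate consequence of Lemma \ref{lowolf} (for the upper bound on $K(\mu)$) and Lemma \ref{ufD} (for the lower bound) applied to the displayed formula for the holomorphic sectional curvature. Nothing is missing.
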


We will refer several times to the a constant $C(\inj)$ depending on the injectivity radius, which we pause now to define.

\begin{definition} \label{defn:C-inj}
Set $C(\inj(X))=(\frac{4\pi}{3}(1-(\frac{4e^{\inj(X)}}{(1+e^{\inj(X)})^2})^3))^{-1}$.
\end{definition}

\begin{remark}
We note here that as the injectivity radius $\inj(X)$ tends to zero, the constant $C(\inj(X)) = \frac{1}{\pi\inj(X)^2} + o(\frac{1}{\inj(X)^2})$. And $C(\inj(X))$ tends to $\frac{3}{4\pi}$ as $\inj(X)$ goes to infinity.
\end{remark}

Next we recall the following proposition which is implicitly proved in \cite{Huang, Teo08, Wolpert5}. For the sake of keeping the exposition self-contained, we give the outline of the proof which follows the identical argument as that of Proposition 3.1 in \cite{Teo08}.

\begin{proposition}\label{lnubw}
Let $(X,\sigma|dz|^2)$ be a closed hyperbolic surface and $\mu \in T_{X}\Teich(S_g)$ be a harmonic Beltrami differential of $X$. Then, for any $q\in X$ we have
\[|\mu(q)|^2 \leq C(r) \int_{B(q;r)}{|\mu(z)|^2 dA(z)} \quad \forall \ 0<r\leq \inj(q). \]
Here $\inj(q)$ is the injectivity radius of $q$ in $X$ and the explicit constant $C(r)$ is given in Definition~\ref{defn:C-inj}.
\end{proposition}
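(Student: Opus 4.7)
My plan is to lift to the universal cover and reduce the statement to a weighted sub-mean-value inequality for a holomorphic function.

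First, since $r\leq \inj(q)$, I can lift $q$ to the origin of the Poincar\'e disk model $(\mathbb{D}, \sigma(z)|dz|^2)$ with $\sigma(z)=4/(1-|z|^2)^2$, so that the projection is an isometry from the Euclidean disk $\{|z|<R\}$, $R:=\tanh(r/2)$, onto $B(q;r)\subset X$. In this uniformization $\mu=\overline{\phi}/\sigma$ for a function $\phi$ that is holomorphic on a neighborhood of $\{|z|\leq R\}$, and one has the pointwise identities
\[|\mu(0)|^2=\frac{|\phi(0)|^2}{16},\qquad |\mu(z)|^2\,dA(z)=\frac{|\phi(z)|^2(1-|z|^2)^2}{4}\,dx\,dy.\]
Thus the proposition is equivalent to proving a Euclidean bound of the form $|\phi(0)|^2\leq K(R)\int_{|z|<R}|\phi(z)|^2(1-|z|^2)^2\,dx\,dy$ with the appropriate constant $K(R)$.

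Next, because $\phi$ is holomorphic, $|\phi|^2$ is subharmonic, so the circular mean
\[M(\rho):=\frac{1}{2\pi}\int_0^{2\pi}|\phi(\rho e^{i\theta})|^2\,d\theta\]
is nondecreasing in $\rho$, giving $|\phi(0)|^2\leq M(\rho)$ for every $\rho\in[0,R]$. I multiply this inequality by the weight $\rho(1-\rho^2)^2$ (which is exactly the density that appears in the hyperbolic integral against the Euclidean measure) and integrate over $[0,R]$. Converting to Cartesian coordinates on the right-hand side yields
\[|\phi(0)|^2\int_0^R \rho(1-\rho^2)^2\,d\rho\ \leq\ \frac{1}{2\pi}\int_{|z|<R}|\phi(z)|^2(1-|z|^2)^2\,dx\,dy.\]

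The final step is to evaluate the constant. The substitution $u=1-\rho^2$ gives $\int_0^R\rho(1-\rho^2)^2\,d\rho=\tfrac{1}{6}\bigl(1-(1-R^2)^3\bigr)$, and combining with the identification $\int_{|z|<R}|\phi|^2(1-|z|^2)^2\,dx\,dy=4\int_{B(q;r)}|\mu|^2\,dA$ and $|\mu(0)|^2=|\phi(0)|^2/16$, I obtain
\[|\mu(q)|^2\ \leq\ \frac{3}{4\pi\bigl(1-(1-R^2)^3\bigr)}\int_{B(q;r)}|\mu|^2\,dA.\]
Since $1-R^2=\mathrm{sech}^2(r/2)=\tfrac{4e^r}{(1+e^r)^2}$, this constant is exactly $C(r)$ of Definition~\ref{defn:C-inj}. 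There is no real obstacle here; the only non-routine choice is picking the weight $\rho(1-\rho^2)^2$, but this is forced by the requirement that after passing to polar coordinates the right-hand side match the hyperbolic $L^2$ integral of $\mu$ over $B(q;r)$.
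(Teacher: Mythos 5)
Your proof is correct and follows essentially the same route as the paper: lift to the disk, write $\mu$ in terms of a holomorphic function, and integrate the sub-mean-value inequality for $|\phi|^2$ against the weight $\rho(1-\rho^2)^2$ over $[0,R]$ with $R=\tanh(r/2)$, arriving at the same constant. The only cosmetic difference is that the paper obtains the inequality $|\phi(0)|^2\leq M(\rho)$ by expanding $\phi$ in a Taylor series and discarding the $n\geq 1$ terms (a Parseval computation) rather than by invoking subharmonicity of $|\phi|^2$ directly.
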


\begin{proof}
Let $\mu \in T_{X}\Teich(S_g)$; we denote its lift into the hyperbolic disk by $\nu$. For any point $q \in X$, we may conjugate the lift by a linear fractional transformation so that 
$$\nu(0)=\mu(q).$$
For any $r \in (0, \inj(q)]$, elementary hyperbolic geometry gives that the hyperbolic disk of radius $r$ centered at $0$ is 
\[B(0;r)=\{z\in \D; |z|\leq \frac{e^r-1}{e^r+1}\}.\] 
Since $\nu$ is a harmonic Beltrami differential on the disk, there exists a holomorphic function $f$ such that 
\[\nu(z)=\frac{\overline{f(z)}}{\frac{4}{(1-|z|^2)^2}}=\frac{\overline{f(z)}(1-|z|^2)^2}{4}.\]
Of course, since $f$ is holomorphic, we may expand it as
\[f(z)=\sum_{n=0}^{\infty}a_{n}z^n\]
where $a_n$ are complex numbers. In particular we have
\[|\mu(q)|=|\nu(0)|=\frac{|a_0|}{4}.\]

Therefore, for any $r \in (0, \inj(q)]$, we have
\begin{eqnarray*}
\int_{B(q;r)}{|\mu(z)|^2 dA(z)}&=&\int_{B(0;r)}{|\nu(z)|^2 dA(z)}\\
&=& \int_{B(0;r)}{|f(z)|^2 \frac{(1-|z|^2)^2}{4}|dz|^2 }\\
&=& \int_{0}^{\frac{e^r-1}{e^r+1}}\int_{0}^{2\pi}|\sum_{n=0}^{\infty}a_{n}u^n e^{\textbf{i}n\theta}|^2 \frac{(1-u^2)^2}{4}udu d\theta\\
&=&\frac{\pi}{2}\int_{0}^{\frac{e^r-1}{e^r+1}}\sum_{n=0}^{\infty}|a_{n}|^2 u^{2n}(1-u^2)^2 udu\\
&\geq& \frac{\pi}{2}|a_0|^2 \int_{0}^{\frac{e^r-1}{e^r+1}}(1-u^2)^2 udu\\
&=&\frac{4\pi}{3}(1-(\frac{4e^{r}}{(1+e^{r})^2})^3) |\mu(q)|^2. 
\end{eqnarray*}
The conclusion follows by dividing the constant on both sides.
\end{proof}

We combine these last two propositions to obtain the following proposition which is crucial in the proof of Theorem \ref{ubfhsc} (which produces a holomorphic section whose curvature is pinched): this proposition traps the holomorphic sectional curvature between 
powers of $L^{\infty}$ bounds on (normalized) holomorphic Beltrami differentials.

\begin{proposition}\label{ratio}
Let $X \in \Teich(S_g)$ with $\inj(X)\geq 1$. Then for any $\mu \in T_X\Teich(S_g)$, there exists a constant $C_0>0$, independent of the genus, such that the holomorphic sectional curvature $K(\mu)$ satisfies
\[  -\frac{2\cdot \sup_{z\in X}|\mu(z)|^2}{||\mu||_{WP}^2}\leq K(\mu)\leq -\frac{C_0\cdot \sup_{z\in X}|\mu(z)|^4}{||\mu||_{WP}^4}.\]
\end{proposition}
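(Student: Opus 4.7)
The plan is to deduce both inequalities from Proposition~\ref{l4} by controlling $\int_X |\mu|^4 \, dA$ between multiples of $\|\mu\|_{WP}^2 \cdot \sup|\mu|^2$ and $\sup|\mu|^4$. The left (less negative) bound will follow from a completely trivial pointwise estimate, while the right (more negative) bound requires converting the pointwise bound on $|\mu|^2$ from Proposition~\ref{lnubw} into a lower bound on the $L^4$ integral via Cauchy--Schwarz.

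For the left-hand inequality, I would start from Proposition~\ref{l4}, which gives $K(\mu) \geq -\frac{2\int_X |\mu|^4 dA}{\|\mu\|_{WP}^4}$. Since $|\mu(z)|^4 \leq \sup_{w \in X} |\mu(w)|^2 \cdot |\mu(z)|^2$ pointwise, integrating gives $\int_X |\mu|^4 \, dA \leq \sup|\mu|^2 \cdot \|\mu\|_{WP}^2$, which upon substitution yields $K(\mu) \geq -\frac{2\sup|\mu|^2}{\|\mu\|_{WP}^2}$.

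For the right-hand inequality, the plan is to reverse direction: bound $\int_X |\mu|^4 \, dA$ below by $\sup|\mu|^4$. Pick a point $q^* \in X$ where $|\mu|^2$ attains its supremum. Since $\inj(X) \geq 1$, Proposition~\ref{lnubw} with $r=1$ gives $|\mu(q^*)|^2 \leq C(1)\int_{B(q^*;1)} |\mu(z)|^2 \, dA(z)$. Apply Cauchy--Schwarz to the right-hand integral:
\[
\int_{B(q^*;1)} |\mu|^2 \, dA \leq \left(\int_{B(q^*;1)} |\mu|^4 \, dA\right)^{1/2} \cdot \mathrm{Vol}_\D(B(0;1))^{1/2},
\]
where the ball's area equals that of the hyperbolic ball $B(0;1) \subset \D$ by the injectivity hypothesis. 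Squaring and extending the integration domain to all of $X$ gives $\sup|\mu|^4 \leq C(1)^2 \cdot \mathrm{Vol}_\D(B(0;1)) \cdot \int_X |\mu|^4 \, dA$. Feeding this into the upper estimate $K(\mu) \leq -\frac{2\int_X |\mu|^4 \, dA}{3\|\mu\|_{WP}^4}$ from Proposition~\ref{l4} produces precisely the claimed bound with $C_0 = \frac{2}{3C(1)^2 \cdot \mathrm{Vol}_\D(B(0;1))}$, matching the constant already introduced in Theorem~\ref{ubfhsc}.

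There is no real obstacle here beyond the conceptual one of recognizing that the combination of Proposition~\ref{lnubw} (which controls pointwise values by local $L^2$ integrals) and Cauchy--Schwarz (which trades $L^2$ against $L^4$ at the cost of a local area factor) is exactly what is needed to bridge between the $L^4$ expression from Proposition~\ref{l4} and the $L^\infty$ expression on the right of the proposition. The hypothesis $\inj(X) \geq 1$ enters only to guarantee that the ball $B(q^*;1)$ is embedded so that the area factor is the universal $\mathrm{Vol}_\D(B(0;1))$, independent of the genus of $X$.
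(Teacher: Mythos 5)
Your proposal is correct and follows essentially the same route as the paper's own proof: the left-hand bound via the trivial pointwise estimate $|\mu|^4\leq \sup|\mu|^2\cdot|\mu|^2$ fed into Proposition~\ref{l4}, and the right-hand bound by combining Proposition~\ref{lnubw} at a point realizing the supremum with Cauchy--Schwarz on the embedded unit ball, yielding the same constant $C_0$. No gaps.
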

\begin{proof}
We begin with the left hand side. By Proposition \ref{l4} we have
\[ -K(\mu)\leq \frac{2\cdot  \sup_{z\in X}|\mu(z)|^2 \cdot \int_{X} |\mu|^2 dA}{||\mu||_{WP}^4}= \frac{2\cdot \sup_{z\in X}|\mu(z)|^2}{||\mu||_{WP}^2}.\]

For the right hand side, since $X$ is compact, we let $p \in X$ such that $|\mu(p)|= \sup_{z\in X}|\mu(z)|$. Let $B(p;1)$ be the closed ball of hyperbolic radius 1 centered at $p$. Use the Cauchy-Schwarz inequality to obtain
\begin{eqnarray*}
\int_{X} |\mu|^4 dA &\geq& \int_{B(p;1)} (|\mu|^4) dA\geq \frac{(\int_{B(p;1)} |\mu|^2 dA)^2}{Vol(B(p;1))}.
\end{eqnarray*}
Applying Proposition \ref{lnubw} to the last term, we find
\begin{eqnarray*}
\int_{X} |\mu|^4 dA \geq \frac{ |\mu(p)|^4}{C(1)^2\cdot Vol(B(p;1))}=\frac{ \sup_{z\in X}|\mu(z)|^4}{C(1)^2\cdot Vol(B(p;1))}.
\end{eqnarray*}

Finally, by choosing $C_0=\frac{2}{3C(1)^2\cdot Vol(B(p;1))}$ (where C(1) is the explicit constant from Definition~\ref{defn:C-inj}), the conclusion follows from Proposition \ref{l4}.
\end{proof}

\begin{remark}
Combining Proposition \ref{lnubw} and Proposition \ref{ratio}, one may conclude that the holomorphic sectional curvatures of the $\epsilon_0$-thick part of the moduli space are uniformly bounded below by a constant only depending on $\epsilon_0$, which was proved in \cite{Huang}. 
\end{remark}

\begin{remark}
If one would like to search for harmonic Beltrami differentials with uniformly negative holomorphic sectional curvatures as the genus goes to infinity, Proposition \ref{ratio} tells that it suffices to find  such differentials with Weil-Petersson $L^2$ norms bounded above and \wep $L ^\infty$ norms bounded below.  We develop this theme in the second part of this paper.
\end{remark}

\subsection{Sufficiently thick surfaces.} \label{sec:suff thick}
Often in \tec theory, one studies problems in the setting where some curve $\alpha$ on the surface $S$ has hyperbolic $X$-length $\ell_X(\alpha)$ small, say 
$\ell_X(\alpha) \leq \epsilon_0$. Alternatively, also often studies issues where the surface has injectivity radius $\inj(X)$ bounded away from zero, say $\inj(X) \geq \epsilon_1$. In this paper, a number of our results are in the region where the hyperbolic surface $X$ has a large injectivity radius, where 'large' here means large enough for some {\it a priori} bounds to apply.

\begin{definition}\label{defn: suff thick}
For a given large constant $C>0$, a surface $X$ is {\it $C$-sufficiently thick} or {\it sufficiently thick} (if the constant $C$ is clear from the context) if $\inj(X) \ge C$.
\end{definition}

We remark that there often is a non-empty subspace $\mathbb{M}^{\ge C}$ of sufficiently thick surfaces of the moduli space $\mathbb{M}$. It is well-known that there exist sequences of hyperbolic surfaces whose injectivity radii grow without bounds. For example, the fundamental group of a closed hyperbolic surface $X$ is residually finite (one can see \cite{Hempel72} for details). After taking finite normal covers to remove simple closed curves of bounded lengths in $X$,
we can find a sequence of hyperbolic surfaces $X_{g_k}$ with injectivity radii $\inj(X_{g_k}) \to \infty$ as $g_k \to \infty$. Moreover, Buser and Sarnak \cite{BS94} proved  that there exists a family of closed surfaces $X_{k}$ of genus $g_k$ with $g_k \to \infty$ as $k \to \infty$ and $\inj(X_k)\geq \frac{2\ln{g_k}}{3}$.

\part{Bounds on the \wep curvature operator}

\section{The Weil-Petersson curvature operator at $X$ and its eigenvalues}\label{s-coos}

In this section, we begin our study of the \wep curvature operator $\tilde{Q}$, establishing some preliminary upper and lower bounds for $<\tilde{Q}(A), A>$ in Proposition~\ref{upfco}. Those estimates rely on a formula for $<\tilde{Q}(A), A>$ displayed in Proposition~\ref{cof} which is particularly well-adapted for estimations.  The first subsection provides notation and context.

Before we study the curvature operator on $\Teich(S_g)$, we set some notation. Let $U \subset \Teich(S_g)$ be a neighborhood of $X$ in \tec space, and let $(t_{1},t_{2},\cdots,t_{3g-3})$ be a system of local holomorphic coordinates on $U$ such that $\{t_{i}(X)=\mu_{i}\}_{1\leq i \leq 3g-3}$ is holomorphic orthonormal at $X$ with $t_{i}=x_{i}+\textbf{i}y_{i} \ \ (1\leq i \leq 3g-3)$. Then $(x_{1},x_{2},\cdots,x_{3g-3},y_{1},y_{2},\cdots,y_{3g-3})$ are real smooth coordinates in $U$ which relate to the complex coordinates as
\begin{eqnarray*}
\frac{\partial}{\partial x_{i}} =\frac{\partial}{\partial t_{i}}+\frac{\partial}{\partial \overline{t_{i}}}, \ \ \frac{\partial}{\partial y_{i}} =\textbf{i}(\frac{\partial}{\partial t_{i}}-\frac{\partial}{\partial \overline{t_{i}}}).
\end{eqnarray*}

Let $T\Teich(S_g)$ be the real tangent bundle of $\Teich(S_g)$ and $\wedge^{2}T\Teich(S_g)$ be the exterior wedge product of $T\Teich(S_g)$ with itself. For any $X \in U$, we have

\[T_{X}\Teich(S_g)=\Span\{\frac{\partial}{\partial x_{i}}(X),\frac{\partial}{\partial y_{j}}(X)\}_{1\leq i,j\leq3g-3}.\] 

and

\[\wedge^{2}T\Teich(S_g)=\Span\{\frac{\partial}{\partial x_{i}}\wedge \frac{\partial}{\partial x_{j}}, 
\frac{\partial}{\partial x_{k}}\wedge \frac{\partial}{\partial y_{l}}, \frac{\partial}{\partial y_{m}}\wedge \frac{\partial}{\partial y_{n}}\}.\]

The space $\wedge^{2}T\Teich(S_g)$ has (real) dimension $(3g-3)(6g-7)$. Let $<,>$ refer to the pairing of vectors with respect to  the Weil-Petersson metric. The natural inner product on $\wedge^{2}T\Teich(S_g)$, associated to the Weil-Petersson metric, is given by
\begin{equation}\label{eqn: mowedge}
<V_1\wedge V_2, V_3\wedge V_4>_{eu}:=<V_1,V_3><V_2,V_4>-<V_1,V_4><V_2,V_3>
\end{equation}
and extended linearly, where $V_i$ are real vectors. One may refer to (\cite{Slang}, p. 238) for more details.

In terms of these real coordinates, the Weil-Petersson curvature operator $\tilde{Q}$ may be described in the following way. Let $X\in \Teich(S_g)$, let $R$ be the Riemannian curvature tensor of the Weil-Petersson metric, and let $\{e_{i}\}_{i=1}^{6g-6}$ be an orthonormal basis of $T_{X}\Teich(S_g)$. Set
\[R_{ijkl}:=<R(e_{i},e_{j})e_{k},e_{l}>.\]

It is clear that
\[\wedge^{2}T_X\Teich(S_g)=\Span\{e_{i}\wedge e_{j}\}_{1 \leq i<j \leq (6g-6)}.\]
Then the operator $\tilde{Q}:\wedge^{2}T_X\Teich(S_g) \rightarrow \wedge^{2}T_X\Teich(S_g)$ may be written in this notation as
\[\tilde{Q}(\sum_{1\leq i<j\leq (6g-6)}a_{ij}e_{i}\wedge e_{j}):=\sum_{1\leq i<j\leq (6g-6)}\sum_{1\leq k<l\leq (6g-6)}a_{ij}R_{ijkl}e_{k}\wedge e_{l},\]
where the coefficients $a_{ij}$ are set to be real.

 From equation (\ref{eqn: mowedge}) it is easy to see that $\{e_{i}\wedge e_{j}\}_{1 \leq i<j \leq (6g-6)}$ is an orthonormal basis of $\wedge^{2}T_X\Teich(S_g)$. Then,
\begin{eqnarray*}
&&<\tilde{Q}(\sum_{1\leq i<j\leq (6g-6)}a_{ij}e_{i}\wedge e_{j}),\sum_{1\leq i<j\leq (6g-6)}b_{ij}e_{i}\wedge e_{j}>_{eu}\\
&=&\sum_{1\leq i<j\leq (6g-6)}\sum_{1\leq k<l\leq (6g-6)}a_{ij}b_{kl}R_{ijkl},
\end{eqnarray*}
where here again the coefficients $b_{ij}$ are real. 

We define the \textsl{associated (bilinear, symmetric) curvature form} to be the bilinear form $Q$ on $\wedge^{2}T\Teich(S_g)$ given by
$Q(V_{1}\wedge V_{2},V_{3}\wedge V_{4})=R(V_{1},V_{2},V_{3},V_{4})$ and extended linearly, where the $V_{i}$ are real vectors. It is easy to see that $Q$ is a bilinear symmetric form (one can see more details in \cite{Jost}). 

In this notation, we see that we may write $Q(A,A)$ as $Q(A,A)=<\tilde{Q}(A),A>_{eu}$ for all $A\in \wedge^{2}T\Teich(S_g)$. By the symmetry of the Riemannian curvature tensor and the definition of the scalar curvature, we then find
\begin{lemma}\label{bfoco}
(1). $\tilde{Q}$ is self-adjoint.\\
(2). The trace $Tr(\tilde{Q})(X)$ of $Q$ at $X$ satisfies $Tr(\tilde{Q})(X)=\Sca(X)$.
\end{lemma}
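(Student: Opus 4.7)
Both parts of Lemma~\ref{bfoco} reduce to direct bookkeeping in the orthonormal basis $\{e_i\wedge e_j\}_{1\le i<j\le 6g-6}$ of $\wedge^2T_X\Teich(S_g)$, using the algebraic symmetries of the Riemann tensor.

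For part (1), the plan is to read off the matrix entries of $\tilde{Q}$ in this basis. From the displayed formula for $\tilde{Q}$, together with the orthonormality of $\{e_i\wedge e_j\}_{i<j}$ under the pairing \eqref{eqn: mowedge}, one obtains
\[
\langle \tilde{Q}(e_i\wedge e_j),\,e_k\wedge e_l\rangle_{eu}=R_{ijkl}.
\]
Invoking the classical pair-symmetry $R_{ijkl}=R_{klij}$ of the Riemann tensor then gives
\[
\langle \tilde{Q}(e_i\wedge e_j),\,e_k\wedge e_l\rangle_{eu}=\langle e_i\wedge e_j,\,\tilde{Q}(e_k\wedge e_l)\rangle_{eu},
\]
so the matrix of $\tilde{Q}$ in this basis is symmetric. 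Extending bilinearly to arbitrary elements of $\wedge^2T_X\Teich(S_g)$ yields the self-adjointness of $\tilde{Q}$.

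For part (2), I would compute the trace of $\tilde{Q}$ in the same orthonormal basis:
\[
\tr(\tilde{Q})(X)=\sum_{1\le i<j\le 6g-6}\langle \tilde{Q}(e_i\wedge e_j),\,e_i\wedge e_j\rangle_{eu}=\sum_{i<j}R_{ijij}.
\]
It then remains to identify this sum with $Sca(X)$. Using the antisymmetries of $R_{ijkl}$ in its first pair and last pair of indices, together with the defining relation $\mathrm{Ric}(e_i,e_i)=\sum_j R(e_j,e_i,e_j,e_i)$, the Riemannian scalar curvature $\sum_i \mathrm{Ric}(e_i,e_i)$ reduces to precisely the unordered sum above; this matches formula \eqref{eqn:scalar curvature} via the standard dictionary between the real and complex curvature tensors on a K\"ahler manifold.

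Neither part presents a real obstacle. Both are algebraic consequences of the symmetries of the Riemann tensor and the choice of basis; the only mild subtlety is keeping the various normalizations in sync when passing between the real orthonormal basis $\{e_i\}$ (in which the trace is most naturally written) and the complex holomorphic orthonormal basis $\{\mu_i\}$ (in which $Sca(X)$ is recorded in \eqref{eqn:scalar curvature}).
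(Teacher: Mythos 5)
Your proposal is correct and follows essentially the same route as the paper, which justifies the lemma in one line by appealing to the pair symmetry $R_{ijkl}=R_{klij}$ for self-adjointness and to the definition of the scalar curvature for the trace identity. Your remark about reconciling the unordered sum $\sum_{i<j}R_{ijij}$ with the complex-basis formula \eqref{eqn:scalar curvature} (where the factor of $2$ from passing to an ordered sum is absorbed by the real/K\"ahler dictionary) is exactly the normalization point the paper leaves implicit.
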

For more details on the Riemannian curvature operator, one can see section 2.2 in \cite{Pet06}. 

\subsection{The eigenvalues of the Weil-Petersson curvature operator}
The action of the almost complex structure $\textbf{J}$ on $T_{X}\Teich(S_g)$ extends to a natural action of $\textbf{J}$ on $\wedge^{2}T_{X}\Teich(S_g)$, defined as follows on a basis 
\begin{eqnarray*}
\begin{cases}
\textbf{J}\circ\frac{\partial}{\partial x_{i}}\wedge \frac{\partial}{\partial x_{j}}:=\frac{\partial}{\partial y_{i}}\wedge \frac{\partial}{\partial y_{j}},\\
\textbf{J}\circ\frac{\partial}{\partial x_{i}}\wedge \frac{\partial}{\partial y_{j}}:=-\frac{\partial}{\partial y_{i}}\wedge \frac{\partial}{\partial x_{j}}=\frac{\partial}{\partial x_{j}}\wedge \frac{\partial}{\partial y_{i}},\\
\textbf{J}\circ\frac{\partial}{\partial y_{i}}\wedge \frac{\partial}{\partial y_{j}}:=\frac{\partial}{\partial x_{i}}\wedge \frac{\partial}{\partial x_{j}},
\end{cases}
\end{eqnarray*}
and then extended linearly. It is easy to see that, as an operator on $\wedge^{2}T_{X}\Teich(S_g)$, we have $\textbf{J}\circ\textbf{J}=id.$

Part of the second author's thesis  shows that for any $X\in \Teich(S_g)$, the curvature operator $\tilde{Q}$ at $X$ is negative semi-definite. More precisely,

\begin{theorem}[\cite{Wu14-co}]\label{conp}
Let $S=S_{g}$ be a closed surface of genus $g>1$ and $\Teich(S_g)$ be the Teichm\"uller space of $S$ endowed with the Weil-Petersson metric. And let $\textbf{J}$ be the almost complex structure on $\Teich(S_g)$ and $Q$ be the associated curvature form on $\Teich(S_g)$. Then, for any $X \in \Teich(S_g)$, we have\\
(1). $\tilde{Q}$ is negative semi-definite, i.e., $<\tilde{Q}(A), A>_{eu}=Q(A,A)\leq 0$ for all $A \in \wedge^{2}T_{X}\Teich(S_g)$.\\
(2). $Q(A,A)=0$ if and only if there exists an element $B$ in $\wedge^{2}T_{X}\Teich(S_g)$ such that
$A=B-\textbf{J}\circ B$.
\end{theorem}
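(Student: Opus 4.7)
My plan is to decompose the real bivector $A$ into its $(2,0) + (1,1) + (0,2)$ components in $\wedge^2 T_X\Teich(S_g)\tensor\C$, observe that only the $(1,1)$-component contributes to $Q(A,A)$ thanks to the K\"ahler property of the Weil-Petersson metric, and then unpack the Tromba-Wolpert formula to write $Q(A^{1,1},A^{1,1})$ as a sum of two manifestly non-positive integrals arising from the positivity of $D=-2(\Delta-2)^{-1}$.

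Concretely, I would first extend $Q$ and $\textbf{J}$ bilinearly to $\wedge^2 T_X\Teich(S_g)\tensor\C$.  Since $\textbf{J}(\partial_{t_i})=\textbf{i}\,\partial_{t_i}$ and $\textbf{J}(\partial_{\bar t_i})=-\textbf{i}\,\partial_{\bar t_i}$, a direct check against the rules defining $\textbf{J}$ on $\wedge^2$ shows that $\textbf{J}$ acts as $-1$ on $\wedge^{2,0}\oplus\wedge^{0,2}$ and as $+1$ on $\wedge^{1,1}$.  The K\"ahler identity $R(\textbf{J}X,\textbf{J}Y)=R(X,Y)$ forces $R(\partial_{t_i},\partial_{t_j})=R(\partial_{\bar t_i},\partial_{\bar t_j})=0$ on the complexification, so the only non-vanishing curvature components are the $R_{i\bar j k\bar l}$ of Theorem \ref{cfow}.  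Writing $A=A^{2,0}+A^{1,1}+A^{0,2}$, every pure-type or cross-type term in $Q(A,A)$ therefore vanishes and we are reduced to $Q(A,A)=Q(A^{1,1},A^{1,1})$.

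Writing $A^{1,1}=\sum_{i,j}b^{ij}\,\partial_{t_i}\wedge\partial_{\bar t_j}$, the reality of $A$ forces the coefficient matrix to be skew-Hermitian, i.e.\ $b^{ij}=-\overline{b^{ji}}$.  I then introduce the auxiliary kernel
\[K(z,w):=\sum_{i,j}b^{ij}\,\mu_i(w)\,\overline{\mu_j(z)},\]
which inherits the antisymmetry $\overline{K(z,w)}=-K(w,z)$; in particular $K(z,z)$ is purely imaginary, say $K(z,z)=\textbf{i}\,\alpha(z)$ with $\alpha$ real.  Substituting the Tromba-Wolpert formula into $Q(A^{1,1},A^{1,1})=\sum b^{ij}b^{kl}R_{i\bar j k\bar l}$ and expressing $D$ through its Green's function (Proposition \ref{psfg}), the two Tromba-Wolpert integrals reassemble as
\[Q(A^{1,1},A^{1,1})=\iint G(z,w)\,K(z,z)\,K(w,w)\,dA(z)\,dA(w)+\iint G(z,w)\,K(z,w)\,K(w,z)\,dA(z)\,dA(w).\]
The first double integral equals $-\int D(\alpha)\,\alpha\,dA$, which is $\leq 0$ by Lemma \ref{ufD}; the second equals $-\iint G(z,w)\,|K(z,w)|^2\,dA(z)\,dA(w)\leq 0$ since $G\geq 0$ by Proposition \ref{psfg}.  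Summing, $Q(A,A)\leq 0$, which proves part (1).

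For the characterization in (2), vanishing of the $|K(z,w)|^2$ integral forces $K\equiv 0$ by positivity of $G$, and since $\{\mu_i\}_{i=1}^{3g-3}$ is a basis of the tangent space this cascades to $b^{ij}=0$ for all $i,j$, giving $A^{1,1}=0$.  Hence $Q(A,A)=0$ if and only if $A$ lies in the $(-1)$-eigenspace of $\textbf{J}$; using $\textbf{J}^2=\id$, this eigenspace is exactly the image of $\id-\textbf{J}$, since if $\textbf{J}A=-A$ then $A=B-\textbf{J}B$ with $B=A/2$, and conversely $\textbf{J}(B-\textbf{J}B)=\textbf{J}B-B=-(B-\textbf{J}B)$.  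The main technical hurdle I anticipate is identifying the kernel $K(z,w)$ whose Hermitian antisymmetry makes both Tromba-Wolpert terms transparently non-positive; once the skew-Hermiticity of $(b^{ij})$ and the resulting structure of $K$ are in hand, the sign analysis reduces to direct applications of Lemma \ref{ufD} and Proposition \ref{psfg}.
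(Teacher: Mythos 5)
Your proof is correct in its essentials, but it is organized quite differently from what the paper does, so a comparison is worthwhile. The paper itself defers Theorem \ref{conp} to \cite{Wu14-co}; the working proof of part (1) that it actually carries is Proposition \ref{cof} together with Proposition \ref{upfco}: the authors keep two separate kernels $F$ and $H$ (encoding the coefficients $a_{ij}+c_{ij}$ and $b_{ij}$), take the first term to be non-positive by positivity of $D$, and then show the second and third double integrals cancel against each other via the Cauchy--Schwarz inequality combined with the symmetry $G(z,w)=G(w,z)$. You instead reduce to the $(1,1)$-component by the K\"ahler identity and repackage everything into a single skew-Hermitian kernel $K$, after which \emph{both} double integrals are manifestly non-positive --- the diagonal one because $K(z,z)$ is purely imaginary (so it becomes $-\int D(\alpha)\alpha\,dA$), and the off-diagonal one because $K(w,z)=-\overline{K(z,w)}$ turns it into $-\iint G|K|^2$. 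This buys you two things the paper's presentation does not give: you avoid Cauchy--Schwarz entirely, and you get part (2) essentially for free, since positivity of $G$ forces $K\equiv 0$, linear independence of $\{\mu_i(w)\overline{\mu_j(z)}\}$ forces all $b^{ij}=0$, and the $(-1)$-eigenspace of $\textbf{J}$ on $\wedge^2 T_X\Teich(S_g)$ is exactly the image of $\id-\textbf{J}$ --- a direction the paper never argues at all. Your identification of the zero locus with $\{A^{1,1}=0\}$ also matches the explicit basis of the kernel exhibited in the proof of Theorem \ref{(3g-3)^2}.

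One caveat you should nail down: the step "the two Tromba--Wolpert integrals reassemble as" hides the convention relating the complexified Riemannian tensor $R(\partial_{t_i},\partial_{\bar t_j},\partial_{t_k},\partial_{\bar t_l})$ to the Hermitian components $R_{i\bar j k\bar l}$ of Theorem \ref{cfow}; depending on whether $\partial_{t_i}$ corresponds to $\mu_i$ or to $\overline{\mu_i}$, your coefficient matrix $(b^{ij})$ may need to be replaced by its complex conjugate $(\overline{b^{ij}})=(-b^{ji})$ for your formula to agree term-by-term with Proposition \ref{cof} (the cross terms between the $a_{ij}$ and $b_{ij}$ data detect this). This does not endanger the argument --- the conjugate of a skew-Hermitian matrix is still skew-Hermitian, so both integrals remain non-positive and the vanishing analysis is unchanged --- but as written your expression for $Q(A,A)$ need not literally coincide with equation \eqref{eqn:Q(A,A)decomposition}, and you should verify the convention once on a test element such as $\frac{\partial}{\partial x_1}\wedge\frac{\partial}{\partial x_2}+\frac{\partial}{\partial x_1}\wedge\frac{\partial}{\partial y_2}$ before asserting the identity.
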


Recall that $\lambda$ is called an \textsl{eigenvalue} of $\tilde{Q}$ if there exists an element $A \in \wedge^{2}T_{X}\Teich(S_g)$ such that $\tilde{Q}(A)=\lambda \cdot A$. In that case, the element $A$ is called the \textsl{eigenvector} associated to $\lambda$.   Since $\tilde{Q}$ is self-adjoint, all of the eigenvalues of $\tilde{Q}$ are real. Since $\Dim(\Teich(S_g))=6g-6$, we see from Theorem \ref{conp} that there exist $(3g-3)(6g-7)$ non-positive eigenvalues $\{\nu_{i}\}_{i=1}^{(3g-3)(6g-7)}$ of $\tilde{Q}$ where we set $\nu_{i+1}\leq \nu_{i}\leq 0$ for all $0\leq i \leq ((3g-3)(6g-7)-1)$. Since the Weil-Petersson sectional curvature is negative (see \cite{Wol86}), not all $\{\nu_{i}\}$ vanish. Our focus is on the non-zero eigenvalues. Refining the analysis of Theorem \ref{conp}, we prove that the number of non-zero eigenvalues of the \wep curvature operator $\tilde{Q}$ on \tec space $\Teich(S_g)$ is constant.

\begin{theorem}\label{(3g-3)^2}
For any $X \in \Teich(S_g)$, the Weil-Petersson curvature operator $\tilde{Q}$ at $X$ has exactly $(3g-3)^2$ negative eigenvalues.
\end{theorem}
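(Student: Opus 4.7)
The plan is to reduce the statement to a linear-algebraic dimension count using Theorem~\ref{conp}. Since $\tilde Q$ is self-adjoint (Lemma~\ref{bfoco}) and negative semi-definite (Theorem~\ref{conp}), all its eigenvalues are real and non-positive; hence the number of \emph{strictly} negative eigenvalues of $\tilde Q$ equals
\[
\dim\wedge^{2}T_{X}\Teich(S_g)-\dim\ker\tilde Q = (3g-3)(6g-7)-\dim\ker\tilde Q,
\]
and the theorem becomes equivalent to the equality $\dim\ker\tilde Q=(3g-3)(3g-4)$.

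My first step would be to identify $\ker\tilde Q$ with the null cone $\mathcal{N}:=\{A\in\wedge^{2}T_X\Teich(S_g) : Q(A,A)=0\}$. One containment is automatic; for the other, since $-\tilde Q$ is positive semi-definite and self-adjoint, Cauchy--Schwarz gives
\[
\langle -\tilde Q A,B\rangle_{eu}^{2}\leq \langle -\tilde Q A,A\rangle_{eu}\cdot \langle -\tilde Q B,B\rangle_{eu}
\]
for all $B$, so $Q(A,A)=0$ forces $\tilde Q(A)=0$. Then Theorem~\ref{conp}(2) identifies $\mathcal{N}$ with the image of the linear map $\Phi:\wedge^{2}T_X\Teich(S_g)\to\wedge^{2}T_X\Teich(S_g)$ given by $\Phi(B)=B-\textbf{J}\circ B$.

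The second step is to compute $\dim\mathrm{Im}(\Phi)$ via a decomposition into $\textbf{J}$-eigenspaces. Since $\textbf{J}\circ\textbf{J}=\id$ on $\wedge^{2}T_X\Teich(S_g)$, there is a splitting $\wedge^{2}T_X\Teich(S_g)=E_{+}\oplus E_{-}$ into $(\pm 1)$-eigenspaces of $\textbf{J}$, and writing $B=B_{+}+B_{-}$ one computes $\Phi(B)=2B_{-}$, so $\mathrm{Im}(\Phi)=E_{-}$. Diagonalizing $\textbf{J}$ on the standard basis is then straightforward: setting $n=3g-3$, for each pair $i<j$ both $\Span\{\partial_{x_i}\wedge\partial_{x_j},\partial_{y_i}\wedge\partial_{y_j}\}$ and $\Span\{\partial_{x_i}\wedge\partial_{y_j},\partial_{x_j}\wedge\partial_{y_i}\}$ contribute one eigenvector to $E_{-}$ (namely $\partial_{x_i}\wedge\partial_{x_j}-\partial_{y_i}\wedge\partial_{y_j}$ and $\partial_{x_i}\wedge\partial_{y_j}-\partial_{x_j}\wedge\partial_{y_i}$), while the $n$ diagonal vectors $\partial_{x_i}\wedge\partial_{y_i}$ are $\textbf{J}$-fixed and contribute only to $E_{+}$. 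This yields $\dim E_{-}=2\binom{n}{2}=n(n-1)=(3g-3)(3g-4)$, as needed.

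The only delicate point is the reduction from the null cone to the kernel via the Cauchy--Schwarz inequality for semi-definite forms; the remainder is an organized basis computation, and I do not foresee a substantive obstacle. Note in particular that the count is independent of $X\in\Teich(S_g)$, so the rank of $\tilde Q$ is a topological invariant of the $\wep$ geometry.
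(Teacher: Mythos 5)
Your proposal is correct and follows essentially the same route as the paper: both arguments combine Theorem~\ref{conp} with the Cauchy--Schwarz (equivalently, discriminant-in-$t$) argument for semi-definite forms --- which is exactly the content of the paper's Lemma~\ref{zoco} --- to identify $\ker\tilde Q$ with the image of $B\mapsto B-\textbf{J}\circ B$, and then count that image's dimension as $(3g-3)(3g-4)$ using the very same basis vectors $\frac{\partial}{\partial x_{i}}\wedge \frac{\partial}{\partial x_{j}}-\frac{\partial}{\partial y_{i}}\wedge \frac{\partial}{\partial y_{j}}$ and $\frac{\partial}{\partial x_{i}}\wedge \frac{\partial}{\partial y_{j}}-\frac{\partial}{\partial x_{j}}\wedge \frac{\partial}{\partial y_{i}}$. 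Your packaging of the count via the $(\pm1)$-eigenspace splitting of the involution $\textbf{J}$ (so that $\id-\textbf{J}$ is twice the projection onto $E_{-}$) is a tidier presentation of the paper's explicit coefficient manipulation, but not a substantively different argument.
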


To prepare for the proof of Theorem \ref{(3g-3)^2}, we note the following standard lemma.

\begin{lemma}\label{zoco}
For any $B \in \wedge^{2}T_{X}\Teich(S_g)$, the element $A$ defined by $A=B-\textbf{J}\circ B$ is a $0$-eigenvector, i.e. $\tilde{Q}(A)=0$. 
\end{lemma}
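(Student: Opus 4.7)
My plan is to deduce the lemma directly from Theorem~\ref{conp}, using nothing more than the standard fact that a semi-definite self-adjoint operator has the same kernel as the null cone of its associated quadratic form.

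First, given $B\in\wedge^{2}T_{X}\Teich(S_g)$ and setting $A=B-\mathbf{J}\circ B$, I would invoke part (2) of Theorem~\ref{conp} in the easy direction to conclude
\[
\langle \tilde{Q}(A),A\rangle_{eu}=Q(A,A)=0.
\]
Then I would exploit that $\tilde{Q}$ is self-adjoint (Lemma~\ref{bfoco}(1)) together with the negative semi-definiteness from Theorem~\ref{conp}(1). By the spectral theorem, $\wedge^{2}T_{X}\Teich(S_g)$ admits an orthonormal basis $\{w_{k}\}$ of eigenvectors of $\tilde{Q}$ with real, non-positive eigenvalues $\nu_{k}\leq 0$. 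Expanding $A=\sum_{k}a_{k}w_{k}$ gives
\[
0=\langle \tilde{Q}(A),A\rangle_{eu}=\sum_{k}\nu_{k}\,a_{k}^{2},
\]
a sum of non-positive terms, so each term must vanish. Thus $a_{k}=0$ whenever $\nu_{k}<0$, which means $A$ lies in the kernel of $\tilde{Q}$, and therefore $\tilde{Q}(A)=0$ as desired.

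There is no real obstacle here; the content of the lemma is entirely packaged in Theorem~\ref{conp}. The only point worth flagging is that Theorem~\ref{conp}(2) is stated as a characterization of the null cone of the quadratic form $Q$, whereas the lemma is about the kernel of the operator $\tilde{Q}$; the bridge between these two statements is precisely the semi-definiteness plus self-adjointness step above, which could alternatively be phrased via the Cauchy--Schwarz inequality for the positive semi-definite form $-Q$: any $A$ with $-Q(A,A)=0$ satisfies $|-Q(A,C)|^{2}\leq (-Q(A,A))(-Q(C,C))=0$ for every $C$, so $\langle \tilde{Q}(A),C\rangle_{eu}=0$ for all $C$, and hence $\tilde{Q}(A)=0$.
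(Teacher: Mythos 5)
Your proof is correct and matches the paper's in substance: both reduce the lemma to the standard fact that for a negative semi-definite self-adjoint operator the null set of the quadratic form equals the kernel of the operator. The paper's actual argument is the quadratic-in-$t$ discriminant computation applied with $C=\tilde{Q}(A)$ --- essentially the Cauchy--Schwarz variant you flag at the end --- while your primary write-up routes through the spectral theorem; either is fine.
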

\begin{proof}
Of course, by part (2) of Theorem \ref{conp} we have $Q(A,A)=0$. Now let $C\in \wedge^{2}T_{X}\Teich(S_g)$. By part (1) of Theorem \ref{conp}, for all $t\in \mathbb{R}$, we have 
\begin{eqnarray*}
0 &\geq& Q(C+tA,C+tA)=<Q(C+tA),C+tA>_{eu}\\
&=& <\tilde{Q}(C),C>_{eu}+2t<\tilde{Q}(A),C>_{eu}.
\end{eqnarray*}
Since $t$ is arbitrary, we see that we could choose $t$ to make the expression $<\tilde{Q}(C),C>_{eu}+2 <\tilde{Q}(A),C>_{eu} t$  above positive unless $<\tilde{Q}(A),C>_{eu}=0$. The conclusion then follows by choosing $C=\tilde{Q}(A)$ so that we have $0 = <\tilde{Q}(A),C>_{eu}= <\tilde{Q}(A),\tilde{Q}(A)>_{eu}$, yielding $\tilde{Q}(A) = 0$.
\end{proof}

\begin{proof}[Proof of Theorem \ref{(3g-3)^2}] It suffices to show that the dimension of the space of zero-eigenvectors of $\tilde{Q}$ is equal to $(3g-3)(3g-4)$. First, it is clear that the elements in $\{\frac{\partial}{\partial x_{i}}\wedge \frac{\partial}{\partial x_{j}}-\frac{\partial}{\partial y_{i}}\wedge \frac{\partial}{\partial y_{j}}\}_{1\leq i<j\leq (3g-3)}, \{\frac{\partial}{\partial x_{i}}\wedge \frac{\partial}{\partial y_{j}}-\frac{\partial}{\partial x_{j}}\wedge \frac{\partial}{\partial y_{i}}\}_{1\leq i<j\leq (3g-3)}$ are linearly independent in $\wedge^{2}T_{X}\Teich(S_g)$. By Lemma \ref{zoco}, we see that 
$\Span\{\{\frac{\partial}{\partial x_{i}}\wedge \frac{\partial}{\partial x_{j}}-\frac{\partial}{\partial y_{i}}\wedge \frac{\partial}{\partial y_{j}}\}_{1\leq i<j\leq (3g-3)}, \{\frac{\partial}{\partial x_{i}}\wedge \frac{\partial}{\partial y_{j}}-\frac{\partial}{\partial x_{j}}\wedge \frac{\partial}{\partial y_{i}}\}_{1\leq i<j\leq (3g-3)}\}$ is contained in the space of zero-eigenvectors of $\tilde{Q}$. Thus, \[\Dim\{A\in \wedge^{2}T_{X}\Teich(S_g);\ \ \tilde{Q}(A)=0\}\geq 2\cdot (3g-3)(3g-4)/2=(3g-3)(3g-4).\]
On the other hand, let $A=\sum_{ij}(a_{ij}\frac{\partial}{\partial x_{i}}\wedge \frac{\partial}{\partial x_{j}}+b_{ij}\frac{\partial}{\partial x_{i}}\wedge \frac{\partial}{\partial y_{j}}+ c_{ij}\frac{\partial}{\partial y_{i}}\wedge \frac{\partial}{\partial y_{j}})\in \wedge^{2}T_{X}\Teich(S_g)$ with $\tilde{Q}(A)=0$. By part (2) of Theorem \ref{conp}, 
we know that
$A=B-\textbf{J}\circ B$ for some $B \in \wedge^{2}T_{X}\Teich(S_g)$. Then we have, for all $1\leq i, j \leq (3g-3)$,
\[a_{ij}+c_{ij}=a_{ji}+c_{ji}, \ \ b_{ij}+b_{ji}=0.\]
We rewrite it as 
\[a_{ij}-a_{ji}=-(c_{ij}-c_{ji}),\ \  b_{ij}=-b_{ji}.\]
Thus,
\[A=\sum_{i<j}(a_{ij}-a_{ji})(\frac{\partial}{\partial x_{i}}\wedge \frac{\partial}{\partial x_{j}}-\frac{\partial}{\partial y_{i}}\wedge \frac{\partial}{\partial y_{j}})+\sum_{i<j}b_{ij}(\frac{\partial}{\partial x_{i}}\wedge \frac{\partial}{\partial y_{j}}-\frac{\partial}{\partial x_{j}}\wedge \frac{\partial}{\partial y_{i}}).\]
In particular $A \in Span\{\{\frac{\partial}{\partial x_{i}}\wedge \frac{\partial}{\partial x_{j}}-\frac{\partial}{\partial y_{i}}\wedge \frac{\partial}{\partial y_{j}}\}_{1\leq i<j\leq (3g-3)}, \{\frac{\partial}{\partial x_{i}}\wedge \frac{\partial}{\partial y_{j}}-\frac{\partial}{\partial x_{j}}\wedge \frac{\partial}{\partial y_{i}}\}_{1\leq i<j\leq (3g-3)}\}$. Since $A$ is arbitrary with $\tilde{Q}(A)=0$,
\[\dim\{A\in \wedge^{2}T_{X}\Teich(S_g);\ \ \tilde{Q}(A)=0\}\leq 2\cdot (3g-3)(3g-4)/2=(3g-3)(3g-4).\] 
Therefore, 
\[\dim\{A\in \wedge^{2}T_{X}\Teich(S_g);\ \ \tilde{Q}(A)=0\}=(3g-3)(3g-4).\]
\end{proof}

From Theorem \ref{conp} and Theorem \ref{(3g-3)^2}, we know that for any $X \in \Teich(S_g)$, the Weil-Petersson curvature operator $\tilde{Q}$ at $X$ has $(3g-3)^2$ negative eigenvalues. We denote them by 
\[\lambda_{(3g-3)^2}(X)\leq \lambda_{(3g-3)^2-1}(X) \leq \lambda_{(3g-3)^2-2}(X)\leq \cdots \lambda_{2}(X)\leq \lambda_{1}(X)<0\]
We close this subsection by rewriting the smallest eigenvalue $\lambda_{(3g-3)^2}(X)$ as follows
\begin{eqnarray}\label{meve}
\lambda_{\min}(X):=\lambda_{(3g-3)^2}(X)=\min_{A\in \wedge^2T_{X}\Teich(S_g), ||A||_{eu}=1}Q(A,A).
\end{eqnarray}

\subsection{The Weil-Petersson curvature operator formula}

In this section, we recall a formula from \cite{Wu14-co} for the curvature operator applied to an element $A \in \wedge^{2}T_{X}\Teich(S_g)$ of $\wedge^{2}T_{X}\Teich(S_g)$. This formula is particularly well-suited to estimating the associated curvature form $Q(A,A)$, and so from the formula we derive some estimates that we will need in the later sections.

To begin, we express an arbitrary element $A \in \wedge^{2}T_{X}\Teich(S_g)$ in coordinates as

\begin{equation} \label{eqn:defnA}
A=\sum_{ij}(a_{ij}\frac{\partial}{\partial x_{i}}\wedge \frac{\partial}{\partial x_{j}}+b_{ij}\frac{\partial}{\partial x_{i}}\wedge \frac{\partial}{\partial y_{j}}+ c_{ij}\frac{\partial}{\partial y_{i}}\wedge \frac{\partial}{\partial y_{j}})\in \wedge^{2}T_{X}\Teich(S_g)
\end{equation}
where $a_{ij}, b_{ij},c_{ij}$ are real numbers. The following formula is crucial not only for our analysis in this section, but indeed for much of our work in this part of the paper.
\begin{proposition} \label{cof}
With $A$ defined as above, we may write $Q(A,A)$ as

\begin{small}
\begin{align} \label{eqn:Q(A,A)decomposition}
Q(A,A)&=-4\int_{X}{D (\Im\{F(z,z)+\textbf{i}H(z,z)\})\cdot(\Im\{F(z,z)+\textbf{i}H(z,z)\})dA(z)}\\ \notag
&-2\int_{X\times X}G(z,w)|F(z,w)+\textbf{i}H(z,w))|^2dA(w)dA(z)\\ \notag
&+2\Re\{\int_{X\times X}G(z,w)(F(z,w)+\textbf{i}H(z,w))(F(w,z)+\textbf{i}H(w,z))dA(w)dA(z)\} \notag
\end{align}
where $G(z,w)$ is the Green's function for the operator $D$, the expression $F(z,w)$ is set to be  $F(z,w)=\sum_{i,j=1}^{3g-3}{(a_{ij}+c_{ij})\mu_{i}(w)\cdot \overline{\mu_{j}(z)}}$ and $H(z,w)$ is defined as 
$H(z,w)=\sum_{i,j=1}^{3g-3}{b_{ij}\mu_{i}(w)\cdot \overline{\mu_{j}(z)}}$.
\end{small}

\end{proposition}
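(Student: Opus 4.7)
The strategy is a direct computation from the Tromba--Wolpert formula in Theorem~\ref{cfow}, followed by careful reorganization to recognize the kernels $F$ and $H$. First, I would expand $Q(A,A) = \langle \tilde{Q}(A), A \rangle_{eu}$ via bilinearity using the decomposition \eqref{eqn:defnA} of $A$, obtaining a quadruple sum whose summands are products of pairs of coefficients drawn from $\{a_{ij}, b_{ij}, c_{ij}\}$ multiplied by $R(V_1,V_2,V_3,V_4)$, each $V_r \in \{\partial/\partial x_m, \partial/\partial y_n\}$. I would then convert each real basis vector to the complex basis via $\partial/\partial x_i = \partial/\partial t_i + \partial/\partial \overline{t_i}$ and $\partial/\partial y_i = \mathbf{i}(\partial/\partial t_i - \partial/\partial \overline{t_i})$, and invoke multilinearity of $R$ together with the K\"ahler vanishing $R(V_1,V_2,V_3,V_4) = 0$ whenever the number of holomorphic entries is not two. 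Each curvature term then reduces to a combination of $R_{i\bar{j}k\bar{l}}$'s weighted by explicit powers of $\mathbf{i}$ and signs.

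Second, I would substitute the Tromba--Wolpert integral expression, written via the Green's function $G$ from Proposition~\ref{psfg},
\[
R_{i\bar{j}k\bar{l}} = \int_{X\times X} G(z,w)\bigl[\mu_i(w)\overline{\mu_j(w)}\mu_k(z)\overline{\mu_l(z)} + \mu_i(w)\overline{\mu_l(w)}\mu_k(z)\overline{\mu_j(z)}\bigr]\,dA(w)\,dA(z),
\]
and interchange summation with integration. Collecting all contributions in which the index pair $(i,j)$ couples to $\mu_i(w)\overline{\mu_j(z)}$-type factors identifies the kernel $F(z,w) + \mathbf{i}H(z,w)$: the combination $a_{ij}+c_{ij}$ arises because $\partial/\partial x_i\wedge\partial/\partial x_j$ and $\partial/\partial y_i\wedge \partial/\partial y_j$ produce identical patterns under complexification (up to the $\mathbf{J}$-action), while the $\mathbf{i}$ accompanying $H$ comes from the single factor of $\mathbf{i}$ introduced by the $\partial/\partial y$-conversion in the mixed bivectors $\partial/\partial x_i\wedge\partial/\partial y_j$.

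Third, I would reorganize the result into the three asserted integrals. The second and third integrals over $X\times X$ come respectively from the first and second summands of Theorem~\ref{cfow}, after pairing the kernel $F+\mathbf{i}H$ with $\overline{F+\mathbf{i}H}$ and with its transpose $(F+\mathbf{i}H)(w,z)$. The first integral, featuring $D$ applied to $\Im\{F(z,z)+\mathbf{i}H(z,z)\}$, arises by using Proposition~\ref{psfg}(1) to replace $\int_X G(z,w)(\,\cdot\,)\,dA(w)$ with the operator $D$ acting on the diagonal kernel value; the $\Im$ emerges because the skew-symmetries $b_{ij}-b_{ji}$ and $(a_{ij}+c_{ij})-(a_{ji}+c_{ji})$ are precisely what survive after summing against the index-symmetric curvature factors.

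The main obstacle is the meticulous combinatorial bookkeeping: tracking every sign and every factor of $\mathbf{i}$ as the real basis is complexified, and verifying that after exploiting the K\"ahler symmetries $R_{i\bar{j}k\bar{l}} = R_{k\bar{j}i\bar{l}} = R_{i\bar{l}k\bar{j}}$, the full expansion collapses cleanly into the stated compact form. Since an essentially identical computation, with the same notation, appears as the central calculation in \cite{Wu14-co}, it is also legitimate simply to cite that source rather than rederive the identity here.
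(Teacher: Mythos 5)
Your proposal is correct and ends up in the same place as the paper: the paper's entire proof of Proposition~\ref{cof} is the one-line citation of Propositions~4.1 and~4.3 of \cite{Wu14-co}, which is exactly the fallback you identify, and your computational outline (complexify the real basis, kill the terms with the wrong number of holomorphic slots by the K\"ahler symmetries, substitute the Tromba--Wolpert integrals via the Green's function, and collect the kernels $F+\textbf{i}H$) is a faithful sketch of the calculation carried out in that reference. One small slip in your heuristic for the first integral: since $\Im\{\textbf{i}H(z,z)\}=\Re\{H(z,z)\}$ and $\overline{H(z,z)}=\sum b_{ji}\mu_i\overline{\mu_j}$, it is the \emph{symmetric} part $b_{ij}+b_{ji}$ that survives there (the skew part of $b$ spans zero-eigenvectors by Theorem~\ref{conp}), while for $F$ it is indeed the skew part of $a_{ij}+c_{ij}$ that survives.
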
 

\begin{proof}
The conclusion directly follows from Proposition 4.1 and Proposition 4.3 in \cite{Wu14-co}.
\end{proof}

Using this decomposition of $Q(A,A)$, we estimate its value in the following proposition.
The proof uses a similar idea as in the proof of Theorem 4.4 in \cite{Wu14-co}.
\begin{proposition}\label{upfco}
Under the same conditions as in Proposition \ref{cof}, we have\\
$(1).$ $-Q(A,A) \geq 4\int_{X}{D (\Im\{F(z,z)+\textbf{i}H(z,z)\})\cdot(\Im\{F(z,z)+\textbf{i}H(z,z)\})dA(z)}$\\
$(2).$ $-Q(A,A) \leq 8\cdot\int_{X}(|F(z,z)|^2+ |H(z,z)|^2)dA(z)+8\int_{X\times X}G(z,w)(|F(z,w)|^2+|H(z,w)|^2)dA(w)dA(z).$ 
\end{proposition}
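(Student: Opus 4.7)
The plan is to start from the decomposition of $Q(A,A)$ given in Proposition~\ref{cof} and bound its three summands separately. Denote them as $Q(A,A) = T_1 + T_2 + T_3$, where $T_1 = -4\int_X D(\Im\Phi(z,z))\Im\Phi(z,z)\,dA$, $T_2 = -2\int\!\!\int G|\Phi|^2\,dA^2$, and $T_3 = 2\Re\int\!\!\int G\Phi(z,w)\Phi(w,z)\,dA^2$, where $\Phi(z,w) := F(z,w) + \textbf{i}H(z,w)$.

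For \textbf{Part (1)}, I would apply the elementary bound $2\Re(\zeta\eta) \leq |\zeta|^2 + |\eta|^2$ with $\zeta=\Phi(z,w)$, $\eta=\Phi(w,z)$. Combined with the symmetry $G(z,w)=G(w,z)$ from Proposition~\ref{psfg}(3), which implies $\int\!\!\int G(z,w)|\Phi(w,z)|^2\,dA^2 = \int\!\!\int G(z,w)|\Phi(z,w)|^2\,dA^2$, this gives $|T_3| \leq 2\int\!\!\int G|\Phi|^2\,dA^2$, hence $T_2 + T_3 \leq 0$. Discarding this non-positive remainder yields $Q(A,A) \leq T_1$, which is exactly Part~(1) after negation.

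For \textbf{Part (2)}, I would bound each $|T_i|$ individually and sum. Lemma~\ref{ufD} applied to $f=\Im\Phi(z,z)$ gives $|T_1| \leq 4\int|\Im\Phi(z,z)|^2\,dA \leq 4\int|\Phi(z,z)|^2\,dA$, and the elementary bound $|F+\textbf{i}H|^2 \leq 2(|F|^2+|H|^2)$ then yields $|T_1| \leq 8\int(|F(z,z)|^2 + |H(z,z)|^2)\,dA$. For the remaining two terms, the triangle and AM-GM inequalities together with the symmetry of $G$ yield
\[
|T_2| + |T_3| \leq 2\!\int\!\!\int G|\Phi|^2\,dA^2 + \!\int\!\!\int G(|\Phi(z,w)|^2+|\Phi(w,z)|^2)\,dA^2 = 4\!\int\!\!\int G(z,w)|\Phi(z,w)|^2\,dA^2.
\]
The crux is then to show $\int\!\!\int G(z,w)|\Phi(z,w)|^2\,dA^2 \leq 2\int(|F(z,z)|^2+|H(z,z)|^2)\,dA$, which combined with the $T_1$ estimate produces the desired total bound of $16\int(|F|^2+|H|^2)\,dA$.

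To prove this crucial step, I would expand $|\Phi(z,w)|^2 \leq 2|F(z,w)|^2+2|H(z,w)|^2$ and handle $\int\!\!\int G|F|^2\,dA^2$ and $\int\!\!\int G|H|^2\,dA^2$ in parallel. Expanding in the orthonormal basis $\{\mu_i\}$ produces, for the $F$ piece, $\sum_{ijkl}M_{ij}M_{kl}\int D(\mu_i\overline{\mu_k})\mu_l\overline{\mu_j}\,dA$, with $M_{ij}=a_{ij}+c_{ij}$. Each summand is bounded via Lemma~\ref{ufD} applied through the Cauchy–Schwarz inequality $|\int D(f)\overline g\,dA| \leq \|f\|_2\|g\|_2$ for the positive sesquilinear form attached to $D$, after which the bounds are regrouped to reassemble the diagonal integral. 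The \emph{main obstacle} I expect is this combinatorial regrouping: naive termwise Cauchy–Schwarz produces up to $(3g-3)^2$ cross-terms, so to avoid a genus-dependent constant one must exploit the structural antisymmetry of the real parts of $d_{ij}$ (forcing $F(z,z)$ to be purely imaginary) to pair terms cleanly back into $\int|F(z,z)|^2\,dA$.
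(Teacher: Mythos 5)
Your Part (1) is correct and coincides in substance with the paper's argument: the paper controls the third term of \eqref{eqn:Q(A,A)decomposition} by Cauchy--Schwarz with respect to the measure $G\,dA\,dA$ and then uses $G(z,w)=G(w,z)$, while you use the pointwise bound $2\Re(\zeta\eta)\leq|\zeta|^2+|\eta|^2$ together with $G\geq 0$ and the same symmetry; both give exactly $T_2+T_3\leq 0$.

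Part (2), however, contains a genuine gap. Writing $\Phi(z,w)=F(z,w)+\textbf{i}H(z,w)$, you correctly reduce everything to the estimate $\int_{X\times X}G(z,w)|\Phi(z,w)|^2\,dA(w)dA(z)\leq 2\int_X\bigl(|F(z,z)|^2+|H(z,z)|^2\bigr)dA(z)$, but you do not prove it, and the route you sketch cannot close. Termwise Cauchy--Schwarz $|\int_X D(f)\overline{g}\,dA|\leq\|f\|_2\|g\|_2$ applied to $\sum_{ijkl}M_{ij}M_{kl}\int_X D(\mu_i\overline{\mu_k})\mu_l\overline{\mu_j}\,dA$ gives at best $\bigl(\sum_{ij}|M_{ij}|\,\|\mu_i\overline{\mu_j}\|_2\bigr)^2\leq\bigl(\sum_{ij}M_{ij}^2\bigr)\int_X\bigl(\sum_i|\mu_i|^2\bigr)^2dA$, and by Proposition~\ref{bfscalar} that last integral is comparable to $|Sca(X)|$, hence grows like $g$ --- precisely the genus-dependent loss you anticipate. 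The proposed rescue does not work: with the coefficients supported on $i<j$ (as in the proof of Theorem~\ref{lbfme-0}) the matrix $(d_{ij})$ is upper-triangular rather than antisymmetric, so $F(z,z)$ need not be purely imaginary; and even if it were, a statement about the diagonal restriction $F(z,z)$ gives no control on the off-diagonal double integral $\int\!\!\int G(z,w)|F(z,w)|^2$, which is where the whole difficulty lives. So the ``crux'' remains unproven in your write-up.

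For comparison, the paper handles this term in one stroke rather than by basis expansion: it invokes Proposition~\ref{psfg}(1) to assert $\int_{X\times X}G(z,w)|\Phi(z,w)|^2dA(w)dA(z)\leq\int_X D(|\Phi(z,z)|^2)\,dA(z)$, and then uses the self-adjointness of $D$ together with $D(1)=1$ to rewrite the right-hand side as $\int_X|\Phi(z,z)|^2dA(z)\leq 2\int_X(|F(z,z)|^2+|H(z,z)|^2)dA(z)$, giving the factor $16$. Your instinct that this passage from the off-diagonal double integral to the diagonal single integral is the delicate point is well founded --- the integrand $|\Phi(z,w)|^2$ depends on both variables, so this reduction is not a formal consequence of $0\leq D\leq \id$ on $L^2$ and deserves care --- but in any case your proposal stops short of supplying any proof of it, so Part (2) is incomplete.
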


\begin{proof}
 We begin with the third term in equation (\ref{eqn:Q(A,A)decomposition}). By the Cauchy-Schwarz inequality, we may write
\begin{small}
\begin{align}\label{3.2}
&|\int_{X\times X}G(z,w)(F(z,w)+\textbf{i}H(z,w))(F(w,z)+\textbf{i}H(w,z))dA(w)dA(z)| \\ \notag
&\leq \int_{X\times X}G(z,w)|(F(z,w)+\textbf{i}H(z,w))(F(w,z)+\textbf{i}H(w,z))|dA(w)dA(z)\nonumber\\ \notag
&\leq \sqrt{\int_{X\times X}G(z,w)|(F(z,w)+\textbf{i}H(z,w))|^2dA(w)dA(z)} \nonumber\\ \notag
&\times \sqrt{\int_{X\times X}G(z,w)|(F(w,z)+\textbf{i}H(w,z))|^2dA(w)dA(z)}\nonumber\\ \notag
&= \int_{X\times X}G(z,w)|(F(z,w)+\textbf{i}H(z,w))|^2dA(w)dA(z).\notag
\end{align}
\end{small}
The last equality follows from the symmetry $G(z,w)=G(w,z)$. 

It directly follows from inequality (\ref{3.2}) that
\begin{small}
\begin{align} \label{eqn: real-diagonal}
&\Re\int_{X\times X}G(z,w)(F(z,w)+\textbf{i}H(z,w))(F(w,z)+\textbf{i}H(w,z))dA(w)dA(z) \\ \notag
&\leq \int_{X\times X}G(z,w)|(F(z,w)+\textbf{i}H(z,w))|^2dA(w)dA(z).\notag
\end{align}
\end{small}

Recall that $Q$ is negative semi-definite. Part (1) of the conclusion follows from the inequality above: the right hand side estimate above for the third term in equation \eqref{eqn:Q(A,A)decomposition} cancels with the second term in equation \eqref{eqn:Q(A,A)decomposition}.

For part (2), beginning from the triangle inequality, we find
\begin{small}
\begin{align} \label{eqn:3.2final terms}
|Q(A,A)|&\leq 4\int_{X}{D (\Im\{F(z,z)+\textbf{i}H(z,z)\})\cdot(\Im\{F(z,z)+\textbf{i}H(z,z)\})dA(z)}\\ \notag
&+2\int_{X\times X}G(z,w)|F(z,w)+\textbf{i}H(z,w))|^2dA(w)dA(z)\\ \notag
&+2 |\Re\int_{X\times X}G(z,w)(F(z,w)+\textbf{i}H(z,w))(F(w,z)+\textbf{i}H(w,z))dA(w)dA(z)|\\ \notag
&\leq 4\int_{X}{D (\Im\{F(z,z)+\textbf{i}H(z,z)\})\cdot(\Im\{F(z,z)+\textbf{i}H(z,z)\})dA(z)}\\ \notag 
&+4\int_{X\times X}G(z,w)|F(z,w)+\textbf{i}H(z,w)|^2dA(w)dA(z) \quad (\text{by applying \eqref{eqn: real-diagonal}}) \\ \notag
&\leq 4\int_{X}{(\Im\{F(z,z)+\textbf{i}H(z,z)\})^2dA(z)}\\ \notag
&+4\int_{X\times X}G(z,w)|F(z,w)+\textbf{i}H(z,w)|^2dA(w)dA(z) \quad (\text{by Lemma~\ref{ufD}}) \\ \notag
&\leq  4\int_{X}{|F(z,z)+\textbf{i}H(z,z)|^2dA(z)}\\ \notag
&+4\int_{X\times X}G(z,w)|F(z,w)+\textbf{i}H(z,w)|^2dA(w)dA(z)\\ \notag
&\leq 8\cdot\int_{X}(|F(z,z)|^2+ |H(z,z)|^2)dA(z)\\ \notag
&+8\int_{X\times X}G(z,w)(|F(z,w)|^2+|H(z,w)|^2)dA(w)dA(z) \notag
\end{align}
\end{small}
with the last inequality following from the fact $G(z,w)\geq 0$ and the elementary inequality 
\[|z_{1}+\textbf{i}z_{2}|^2\leq 2(|z_{1}|^2+|z_2|^2), \ \ \forall z_{1},z_{2}\in \mathbb{C}.\]
\end{proof}

We use the first part of this proposition to prove Theorem~\ref{uubf-0} in the next section, and the second part of the proposition to prove Theorem~\ref{lbfme-0} in Section~5.

\section{A uniform upper bound for $\lambda_{\min}(X)$ }\label{s-ub}
From the definition of the curvature operator we know that 
\[\sum_{i=1}^{(3g-3)^2}\lambda_{i}(X)=\Sca(X).\]
Thus a trivial upper bound for $\lambda_{(3g-3)^2}(X)$ is $\lambda_{(3g-3)^2}(X) \le \frac{\Sca(X)}{(3g-3)^2}$. By Remark \ref{ubfs} in section 2, this upper bound is less than or equal to $\frac{-1}{12\pi(g-1)}$, which approaches zero as $g$ goes to infinity. The goal of this section is to prove Theorem~\ref{uubf-0} which estimates $\lambda_{(3g-3)^2}(X)$ by a negative number independent of genus.

 The proof is actually quite straightforward: because $\lambda_{\min}(X)$ will measure the $L^{\infty}$-norm of $\tilde{Q}$, we simply exhibit an (unit normed) element $A_0$ in $\wedge^2T_X\Teich(S_g)$ with $L^{\infty}$ norm bounded away from zero.  After a few preparatory remarks on how one estimates the norm of $\tilde{Q}$, we display our element $A_0$ and verify the claims as to its $\tilde{Q}$-norm.

There are three terms in the formula of the Weil-Petersson curvature operator presented in Proposition \ref{cof}. The first term is non-positive, and the sum of the second and third terms is also non-positive by the argument in the proof of Proposition \ref{upfco}.
Of course, since $\lambda_{(3g-3)^2}(X)$ is the minimum eigenvalue, the Rayleigh-Ritz formulation implies that for any element $A$, we have that  $\lambda_{(3g-3)^2}(X) \le \frac{Q(A,A)}{\|A\|_{eu}^2}$. Now we are ready to describe the proof of  Theorem \ref{uubf-0} in detail.

\begin{proof}[Proof of Theorem \ref{uubf-0}]
Let $\{\mu_{i}\}_{i=1}^{3g-3}$ be a holomorphic orthonormal basis $T_{X}\Teich(S_g)$ and let $\{\frac{\partial}{\partial t_{i}}\}_{i=1}^{3g-3}$ be the vector field on $\Teich(S_g)$ near $X$ such that $\frac{\partial}{\partial t_{i}}|_{X}=\mu_{i}$. Let $t_{i}=x_{i}+\textbf{i}y_{i}$ and let $A_0=\frac{1}{\sqrt{3g-3}}(\sum_{i=1}^{3g-3}\frac{\partial}{\partial x_{i}}(X)\wedge \frac{\partial}{\partial y_{i}}(X))$ be our special element of $\wedge^{2}T_{X}\Teich(S_g)$.

We begin with some preliminary computations on our special two-form $A_0$: we show that $\|A_0\|_{eu} =1$, and find the associated functions $F$ and $H$ used in the formula \eqref{eqn:Q(A,A)decomposition} for the associated form $Q(A_0,A_0)$ for this element. 
Since $\{\mu_{i}\}_{i=1}^{3g-3}$ is a holomorphic orthonormal basis $T_{X}\Teich(S_g)$, we have 
\begin{eqnarray*}
<\frac{\partial}{\partial x_{i}}(X),\frac{\partial}{\partial x_{j}}(X)>&=&\Re\{<\frac{\partial}{\partial x_{i}}(X)+\textbf{i}\textbf{J}\circ \frac{\partial}{\partial x_{i}}(X),\frac{\partial}{\partial x_{j}}(X)+\textbf{i}\textbf{J}\circ \frac{\partial}{\partial x_{j}}(X)>\}\\
&=&\Re\{<\mu_i, \mu_j>\}=\delta_{ij}.
\end{eqnarray*}
Similarly, we have
\begin{eqnarray*}
<\frac{\partial}{\partial y_{i}}(X),\frac{\partial}{\partial y_{j}}(X)>&=&\Re\{<\frac{\partial}{\partial y_{i}}(X)+\textbf{i}\textbf{J}\circ \frac{\partial}{\partial y_{i}}(X),\frac{\partial}{\partial y_{j}}(X)+\textbf{i}\textbf{J}\circ \frac{\partial}{\partial y_{j}}(X)>\}\\
&=&\Re\{<-\textbf{i}\mu_i, -\textbf{i}\mu_j>\}=\Re\{<\mu_i, \mu_j>\}=\delta_{ij}
\end{eqnarray*}

and

\begin{eqnarray*}
<\frac{\partial}{\partial x_{i}}(X),\frac{\partial}{\partial y_{j}}(X)>&=&\Re\{<\frac{\partial}{\partial x_{i}}(X)+\textbf{i}\textbf{J}\circ \frac{\partial}{\partial x_{i}}(X),\frac{\partial}{\partial y_{j}}(X)+\textbf{i}\textbf{J}\circ \frac{\partial}{\partial y_{j}}(X)>\}\\
&=&\Re\{<\mu_i, -\textbf{i}\mu_j>\}=-\Im\{<\mu_i, \mu_j>\}\\
&=&-\Im\{\delta_{ij}\}=0.
\end{eqnarray*}

Thus,  equation (\ref{eqn: mowedge}) gives that 
\begin{eqnarray*}
&&<\frac{\partial}{\partial x_{i}}(X)\wedge \frac{\partial}{\partial y_{i}}(X),\frac{\partial}{\partial x_{j}}(X)\wedge \frac{\partial}{\partial y_{j}}(X)>_{eu}\\
&=&<\frac{\partial}{\partial x_{i}}(X),\frac{\partial}{\partial x_{j}}(X)> <\frac{\partial}{\partial y_{i}}(X),\frac{\partial}{\partial y_{j}}(X)>\\
&-&<\frac{\partial}{\partial x_{i}}(X),\frac{\partial}{\partial y_{j}}(X)> <\frac{\partial}{\partial y_{i}}(X),\frac{\partial}{\partial x_{j}}(X)>\\
&=&\delta_{ij}^2-0=\delta_{ij}.
\end{eqnarray*}

Therefore, the norm of $A_0$ satisfies
\begin{equation}\label{eqn: noA_0}
||A_0||_{eu}^2=\frac{1}{3g-3}<\sum_{i=1}^{3g-3}\frac{\partial}{\partial x_{i}}(X)\wedge \frac{\partial}{\partial y_{i}}(X),\sum_{i=1}^{3g-3}\frac{\partial}{\partial x_{i}}(X)\wedge \frac{\partial}{\partial y_{i}}(X)>_{eu}=1.
\end{equation}

As we plan to apply Proposition~\ref{upfco}, we need to compute $F(z,z)$ and $H(z,z)$ for this element $A_0$.  Of course, 
\begin{equation*}
F(z,z)=\sum_{i,j=1}^{3g-3}{(a_{ij}+c_{ij})\mu_{i}(z)\cdot \overline{\mu_{j}(z)}} =0
\end{equation*}
as from our definition of $A_0$ and the coefficients $a_{ij}, c_{ij}$ in \eqref{eqn:defnA}, we see we have set all of the $a_{ij}= c_{ij}=0$ in the definition of $F(z,z)$ in Proposition~\ref{cof}. Similarly,
\begin{equation*}
H(z,w) = \sum_{i,j=1}^{3g-3}b_{ij}\mu_{i}(w)\cdot \overline{\mu_{j}(z)} = \sum_{i=1}^{3g-3}\frac{1}{\sqrt{3g-3}}\mu_{i}(w)\cdot \overline{\mu_{i}(z)}
\end{equation*}
by our definition of $H$ in Proposition~\ref{cof}.

From part (1) of Proposition \ref{upfco},
\begin{eqnarray*}
Q(A_0,A_0)&\leq& -4\int_{X}{D (\Im\{\textbf{i}H(z,z)\})\cdot(\Im\{\textbf{i}H(z,z)\})dA(z)}\\
&=& -4\int_{X}D(\frac{1}{\sqrt{3g-3}}\sum_{i=1}^{3g-3}|\mu_{i}|^2)(\frac{1}{\sqrt{3g-3}}\sum_{i=1}^{3g-3}|\mu_{i}|^2) dA\\
&\leq& -\frac{4}{3} \int_{X}(\frac{1}{\sqrt{3g-3}}\sum_{i=1}^{3g-3}|\mu_{i}|^2)^2 dA \quad (\text{by Lemma~\ref{lowolf}}) \\
&=&-\frac{4}{9(g-1)} \int_{X}(\sum_{i=1}^{3g-3}|\mu_{i}|^2)^2 dA\\
&\leq& \frac{2}{9(g-1)}\Sca(X),
\end{eqnarray*}
with the last inequality follows from Proposition \ref{bfscalar}. 

Combining equation (\ref{meve}), equation (\ref{eqn: noA_0}) and the inequality above, we have 
\[\lambda_{\min}(X)\leq \frac{Q(A_0,A_0)}{||A_0||_{eu}^2} \leq \frac{2 \Sca(X)}{9(g-1)}.\]

By Lemma 4.6 in \cite{Wol86}, Wolpert's upper bound for scalar curvature, we have 
\[\lambda_{\min}(X)\leq \frac{2 \Sca(X)}{9(g-1)}\leq \frac{2}{9(g-1)}\cdot \frac{-3(3g-2)}{4\pi} < \frac{-1}{2\pi}.\]
\end{proof}

\section{A uniform lower bound for $\lambda_{\min}(X)$ in the thick-part}\label{s-lb}
The goal of this section is to prove Theorem~\ref{lbfme-0} that was stated in the introduction.  In contrast to the argument in the last section where we bounded from above the norm of a particular element $A_0 \in \wedge^2T_X\Teich(S_g)$, here we need to bound from below the norm of an arbitrary element $A \in \wedge^2T_X\Teich(S_g)$.  The proof rests on the (other) bound in Proposition~\ref{upfco}, where here we inherit uniform bounds on all of the terms in the the norm $Q(A,A)$ from the uniform thickness of a surface in the thick part of the surface.

Recall the systole, denoted by $\systole(X)$, of a compact hyperbolic surface $X$ is the length of the shortest nontrivial simple closed curve in $X$. So $\systole(X)=2\inj(X)$ where $\inj(X)$ is the injectivity radius of $X$. The $\epsilon$-thick part $\Teich(S_g)^{\geq \epsilon}$ of $\Teich(S_g)$ is defined by 
\[\Teich(S_g)^{\geq \epsilon}:=\{Y\in \Teich(S_g); \systole(Y)\geq \epsilon\}.\] 
Let $\Mod(S_g)$ be the mapping class group of $S_g$. Then the quotient space $\mathbb{M}(S_g)^{\geq \epsilon}:=\Teich(S_g)^{\geq \epsilon}/\Mod(S_g)$ is called the $\epsilon$-thick part of the moduli space. By Mumford compactness, we have (see e.g. \cite{IT92-book}) that $\mathbb{M}(S_g)^{\geq \epsilon}$ is compact for all $\epsilon>0$. In particular there is a lower bound for $\lambda_{(3g-3)^2}(X)$ where $X$ runs over the thick part of $\Teich(S_g)$. A natural question is whether this lower bound depends on the topology of the surface. Theorem \ref{lbfme-0} answers this question negatively; specifically, we provide a lower bound that only depends on the thickness $\epsilon$ but not on genus of the surface.

We begin the proof of Theorem~\ref{lbfme-0} with some preliminary estimates on norms of sums of harmonic Beltrami differentials. Let $\{\mu_{i}\}_{i=1}^{3g-3}$ be a holomorphic orthonormal basis of $T_{X}\Teich(S_g)$; thus, in particular, we have that $\int_{X}\mu_{i}(z)\overline{\mu_{j}(z)}dA=\delta_{ij}$. Now, the expression of the Weil-Petersson curvature operator -- presented in Proposition~\ref{cof} --involved expressions $F(z,w)$ and $H(z,w)$ both of the same form, say $K(z,w)=\sum_{i,j=1}^{3g-3}{d_{ij}\mu_{i}(w)\cdot \overline{\mu_{j}(z)}}$ (where the coefficients $d_{ij}$ were real). We separate the proof of Theorem \ref{lbfme-0} into lemmas that estimate norms of such expressions $K(z,w)$. The following lemma is a direct consequence of Proposition \ref{lnubw}.

\begin{lemma}\label{glnbw}
Fix $z\in X$, then for $K(z,w)=\sum_{i,j=1}^{3g-3}{d_{ij}\mu_{i}(w)\cdot \overline{\mu_{j}(z)}}$, we have
\[\sup_{w\in X}|K(z,w)|^2\leq C(\inj(X))\cdot (\sum_{1\leq i,j,l\leq 3g-3}{d_{ij}d_{il}\overline{\mu_{j}(z)}\mu_{l}(z)})\]
where $C(\inj(X))$ is the same as in Proposition \ref{lnubw}.
\end{lemma}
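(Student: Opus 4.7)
The plan is to view $K(z,w)$ as a harmonic Beltrami differential in the $w$ variable and apply Proposition~\ref{lnubw} pointwise.

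First, fix $z \in X$ and regard the coefficients $c_i(z) := \sum_{j=1}^{3g-3} d_{ij}\overline{\mu_j(z)}$ as constants. Then $K(z,w) = \sum_{i=1}^{3g-3} c_i(z)\mu_i(w)$ is a finite linear combination of harmonic Beltrami differentials in the $w$ variable, and hence is itself a harmonic Beltrami differential on $X$. This is the observation that lets us feed $K(z,\param)$ into the reproducing-type estimate of Proposition~\ref{lnubw}.

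Next, pick any point $q \in X$ realizing (or nearly realizing) the supremum $\sup_{w\in X}|K(z,w)|$. Since $\inj(q)\geq \inj(X)$ for every $q\in X$, we may apply Proposition~\ref{lnubw} to the harmonic Beltrami differential $w\mapsto K(z,w)$ with radius $r=\inj(X)$, obtaining
\[|K(z,q)|^2 \leq C(\inj(X))\int_{B(q;\inj(X))}|K(z,w)|^2\,dA(w)\leq C(\inj(X))\int_{X}|K(z,w)|^2\,dA(w),\]
where we have simply enlarged the domain of integration to all of $X$.

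The remaining step is a routine computation exploiting the orthonormality $\int_X \mu_i(w)\overline{\mu_k(w)}\,dA(w)=\delta_{ik}$: expanding the square,
\[\int_X |K(z,w)|^2\,dA(w)=\sum_{i,j,k,l}d_{ij}d_{kl}\,\overline{\mu_j(z)}\mu_l(z)\int_X \mu_i(w)\overline{\mu_k(w)}\,dA(w)=\sum_{1\leq i,j,l\leq 3g-3}d_{ij}d_{il}\,\overline{\mu_j(z)}\mu_l(z).\]
Taking the supremum over $q$ (equivalently over $w\in X$) of the left-hand side in the preceding displayed inequality gives the stated bound. There is no essential obstacle here — the content of the lemma is precisely the repackaging of Proposition~\ref{lnubw} once one notices that $K(z,\param)$ is a harmonic Beltrami differential for every fixed $z$.
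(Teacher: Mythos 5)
Your proof is correct and follows essentially the same route as the paper: fix $z$, observe that $K(z,\cdot)=\sum_i\bigl(\sum_j d_{ij}\overline{\mu_j(z)}\bigr)\mu_i(\cdot)$ is a harmonic Beltrami differential in $w$, apply Proposition~\ref{lnubw}, enlarge the ball to all of $X$, and evaluate the $L^2$ norm by orthonormality. Your version is slightly more careful than the paper's in spelling out the choice of radius $r=\inj(X)\leq\inj(q)$ and the enlargement of the domain of integration, but the argument is the same.
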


\begin{proof}
Rewrite $K(z,w)$ as
\[K(z,w)=\sum_{i=1}^{3g-3}(\sum_{j=1}^{3g-3}{d_{ij}\overline{\mu_{j}(z)}})\cdot \mu_{i}(w).\] 

Thus, if we fix $z \in X$, the form $K(z,w)$ is a harmonic Beltrami differential on $X$ in the coordinate $w$. From Proposition \ref{lnubw}, we have
\begin{eqnarray*}
&&\sup_{w\in X}|K(z,w)|^2 \leq C(\inj(X))\cdot  \int_{X}{K(z,w)\overline{K(z,w)} dA(w)}\\
&=& C(\inj(X))\cdot  \int_{X}{(\sum_{i,j=1}^{3g-3}{d_{ij}\mu_{i}(w)\cdot \overline{\mu_{j}(z)}})(\sum_{k,l=1}^{3g-3}{d_{kl}\overline{\mu_{k}(w)}\cdot \mu_{l}(z)}) dA(w)}\\
&=& C(\inj(X))\cdot (\sum_{1\leq i,j,k,l\leq 3g-3} d_{ij}d_{kl} \overline{\mu_{j}(z)}\mu_{l}(z) )\cdot \int_{X}\mu_{i}(w)\overline{\mu_{k}(w)}dA(w))\\
&=& C(\inj(X))\cdot (\sum_{1\leq i,j,l\leq 3g-3}{d_{ij}d_{il}\overline{\mu_{j}(z)}\mu_{l}(z)}),
\end{eqnarray*} 
where the last equality uses the fact that the basis $\{\mu_{i}\}$ is orthonormal.
\end{proof}

Specializing to an $L^2$ bound for $K(z,z)$, we find

\begin{lemma}\label{iofg}
$\int_{X}|K(z,z)|^2dA(z)\leq C(\inj(X))\cdot (\sum_{1\leq i,j\leq (3g-3)}d_{ij}^2)$.
\end{lemma}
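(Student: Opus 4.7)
The plan is to combine the pointwise bound from Lemma~\ref{glnbw} with the orthonormality of the basis $\{\mu_i\}$, with the elementary observation that $|K(z,z)|^2 \le \sup_{w \in X}|K(z,w)|^2$.

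First, I would fix an arbitrary $z \in X$ and apply Lemma~\ref{glnbw} to obtain
\[
|K(z,z)|^2 \;\le\; \sup_{w \in X} |K(z,w)|^2 \;\le\; C(\inj(X)) \cdot \sum_{1\le i,j,l \le 3g-3} d_{ij}\,d_{il}\,\overline{\mu_{j}(z)}\,\mu_{l}(z).
\]
This is the key step and essentially the only substantive one; everything else is integration and bookkeeping. Note that the right-hand side is manifestly a nonnegative real quantity since the left-hand side is, even though individual summands are not real.

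Next, I would integrate this inequality over $z \in X$ against the volume form $dA(z)$, interchanging the finite sum with the integral:
\[
\int_X |K(z,z)|^2\, dA(z) \;\le\; C(\inj(X)) \sum_{i,j,l} d_{ij}\,d_{il} \int_X \overline{\mu_{j}(z)}\,\mu_{l}(z)\, dA(z).
\]
By the orthonormality of the holomorphic basis $\{\mu_i\}_{i=1}^{3g-3}$ of $T_X\Teich(S_g)$, the Petersson pairing gives $\int_X \mu_{l}(z)\,\overline{\mu_{j}(z)}\, dA(z) = \delta_{lj}$. Substituting this collapses the triple sum to a double sum in which only the diagonal terms $l = j$ survive, producing
\[
\int_X |K(z,z)|^2\, dA(z) \;\le\; C(\inj(X)) \sum_{1\le i,j \le 3g-3} d_{ij}^2,
\]
which is the desired inequality. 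There is no real obstacle: the lemma is a direct corollary of Lemma~\ref{glnbw} combined with orthonormality, and its purpose is to package the pointwise estimate into the $L^2$ form in which it will be applied in the proof of Theorem~\ref{lbfme-0}.
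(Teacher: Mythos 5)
Your proposal is correct and follows exactly the paper's own argument: apply Lemma~\ref{glnbw} pointwise, integrate over $X$, and use the orthonormality $\int_X \mu_l \overline{\mu_j}\, dA = \delta_{lj}$ to collapse the triple sum to $\sum_{i,j} d_{ij}^2$. Nothing is missing.
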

\begin{proof}
First, Lemma~\ref{glnbw} gives that, for any $z \in X$, 
\[|K(z,z)|^2\leq \sup_{w\in X}|K(z,w)|^2\leq C(\inj(X))\cdot (\sum_{1\leq i,j,l\leq 3g-3}{d_{ij}d_{il}\overline{\mu_{j}(z)}\mu_{l}(z)}).\]
Thus, we have
\begin{eqnarray*}
&&\int_{X}|K(z,z)|^2dA(z)\\
&\leq&C(\inj(X))\cdot \int_{X}{\sum_{1\leq i,j,l\leq 3g-3}{d_{ij}d_{il}\overline{\mu_{j}(z)}\mu_{l}(z)}dA(z)}\\
&=& C(\inj(X))\cdot (\sum_{1\leq i,j\leq (3g-3)}d_{ij}^2)
\end{eqnarray*} 
where the last equality follows from the assumption that $\{\mu_{i}\}$ is orthonormal.
\end{proof}

\begin{lemma}\label{iofg-2}
$\int_{X\times X}G(z,w) |K(z,w)|^2dA(w)dA(z)\leq C(\inj(X))\cdot (\sum_{1\leq i,j\leq (3g-3)}d_{ij}^2)$.
\end{lemma}

\begin{proof}
Since the Green function $G(z,w)\geq 0$ for all $(z,w)\in X\times X$, from Lemma \ref{glnbw} we have
\begin{eqnarray*}
&&\int_{X\times X}G(z,w) |K(z,w)|^2dA(w)dA(z) \\
&\leq& \int_{X\times X}G(z,w) (\sup_{w \in X}|K(z,w)|^2 )dA(w)dA(z) \\
&\leq&C(\inj(X))\cdot\int_{X\times X}G(z,w){\sum_{1\leq i,j,l\leq 3g-3}{d_{ij}d_{il}\overline{\mu_{j}(z)}\mu_{l}(z)}dA(w)dA(z)}\\
&=& C(\inj(X))\cdot\int_{X} D(1) \cdot {(\sum_{1\leq i,j,l\leq 3g-3}{d_{ij}d_{il}\overline{\mu_{j}(z)}\mu_{l}(z))} dA(z)}   \\
&=& C(\inj(X))\cdot (\sum_{1\leq i,j\leq (3g-3)}d_{ij}^2)
\end{eqnarray*} 
where the last equality follows from that face $D(1)=1$ and the assumption that $\{\mu_{i}\}$ is orthonormal.
\end{proof}

Now we are ready to prove Theorem \ref{lbfme-0}.
\begin{proof}[Proof of Theorem \ref{lbfme-0}]
 Let $A \in \wedge^{2}T_{X}\Teich(S_g)$ be expressed as 
\[A=(\sum_{1\leq i<j\leq (3g-3)}a_{ij}\frac{\partial}{\partial x_{i}}\wedge \frac{\partial}{\partial x_{j}}+\sum_{1\leq i,j\leq (3g-3)}b_{ij} \frac{\partial}{\partial x_{i}}\wedge \frac{\partial}{\partial y_{j}}+\sum_{1\leq i<j\leq (3g-3)}c_{ij}\frac{\partial}{\partial y_{i}}\wedge \frac{\partial}{\partial y_{j}})\] 
where $a_{ij}, b_{ij},c_{ij}$ are real. In terms of the coefficients of the basic forms, we may compute the norm $\|A\|_{eu}$ of $A$ as
\begin{equation} \label{eqn:A norm}
\|A\|_{eu}^2 = \sum_{1\leq i<j\leq (3g-3)}a_{ij}^2+\sum_{1\leq i,j\leq (3g-3)}b_{ij}^2+\sum_{1\leq i<j\leq (3g-3)}c_{ij}^2.
\end{equation}
From part (2) of Proposition~\ref{upfco}, we have
\begin{eqnarray}\label{11-0}
-Q(A,A) &\leq& 8\cdot\int_{X}(|F(z,z)|^2 + |H(z,z)|^2)dA(z)+\\
&8&\cdot \int_{X\times X}G(z,w)(|F(z,w)|^2+|H(z,w)|^2)dA(w)dA(z) \notag
\end{eqnarray} 
where $F(z,z)=\sum_{1\leq i<j\leq (3g-3)}{(a_{ij}+c_{ij})\mu_{i}(z)\cdot \overline{\mu_{j}(z)}}$ and $H(z,z)=\sum_{i,j=1}^{3g-3}{b_{ij}\mu_{i}(z)\cdot \overline{\mu_{j}(z)}}$. 

By Lemma \ref{iofg} and Lemma \ref{iofg-2}, we have
\begin{eqnarray}\label{11-1}
\int_{X}|F(z,z)|^2dA(z)\leq  C(\inj(X))\cdot (\sum_{1\leq i<j\leq (3g-3)}(a_{ij}+c_{ij})^2)
\end{eqnarray}
and
\begin{eqnarray}\label{11-1-1}
\int_{X\times X}G(z,w)|F(z,w)|^2dA(w)dA(z)\\
\leq  C(\inj(X))\cdot (\sum_{1\leq i<j\leq (3g-3)}(a_{ij}+c_{ij})^2)\nonumber
\end{eqnarray}
where we have set $\{d_{ij}=a_{ij}+c_{ij}\}$ for $i<j$ and $d_{ij}=0$ otherwise.

Similarly, by Lemma \ref{iofg} and Lemma \ref{iofg-2}, we also have
\begin{eqnarray}\label{11-2}
\int_{X}|H(z,z)|^2dA(z)\leq  C(\inj(X))\cdot (\sum_{1\leq i,j\leq (3g-3)}b_{ij}^2)
\end{eqnarray}
and
\begin{eqnarray}\label{11-2-1}
\int_{X\times X}G(z,w)|H(z,w)|^2dA(w)dA(z)\\
\leq   C(\inj(X))\cdot (\sum_{1\leq i,j\leq (3g-3)}b_{ij}^2) \nonumber
\end{eqnarray}
by setting $\{d_{ij}=b_{ij}\}$.

From inequalities (\ref{11-0})-(\ref{11-2-1}), we find
\begin{eqnarray*}
Q(A,A) &\geq& -16 C(\inj(X))\cdot (\sum_{1\leq i<j\leq (3g-3)}(a_{ij}+c_{ij})^2+\sum_{1\leq i,j\leq (3g-3)}b_{ij}^2)\\
&\geq& -32 C (\inj(X))\cdot (\sum_{1\leq i<j\leq (3g-3)}(a_{ij}^2+c_{ij}^2)+\sum_{1\leq i,j\leq (3g-3)}b_{ij}^2)\\
&=& -32C (\inj(X))\|A\|_{eu}^2 \text{ by \eqref{eqn:A norm} }.
\end{eqnarray*} 
The theorem then follows from the Rayleigh-Ritz characterization of the lowest eigenvalue and by choosing $B(\epsilon)=32C(\frac{\epsilon}{2})$.
\end{proof}

\begin{proof}[Proof of Theorem \ref{ubfl^p}]
For $1\leq i \leq (3g-3)^2$, let $\lambda_{i}(X_g)$ be the $i$-th eigenvalue of the Weil-Petersson curvature operator. From Theorem \ref{conp} we know that $\lambda_i = \lambda_{i}(X_g)<0$. So we have 
$$||\tilde{Q}||^p_{\ell^p}(X_g)=\sum_{i=1}^{(3g-3)^2}(-\lambda_{i})^p \leq \sum_{i=1}^{(3g-3)^2}(-\lambda_{i})\cdot (-\lambda_{(3g-3)^2})^{p-1}.$$

Since $p>1$, Theorem \ref{lbfme-0} gives that
$$||\tilde{Q}||^p_{\ell^p}(X_g) \leq \sum_{i=1}^{(3g-3)^2}(-\lambda_{i})\cdot B(\epsilon)^{p-1}= B(\epsilon)^{p-1} \cdot (-\Sca(X_g)).$$

Recall that Proposition 3.3 in Teo's paper \cite{Teo08} says that $\Sca(X_g)\geq -6(g-1)C(\frac{\epsilon}{2})$. So we have

$$||\tilde{Q}||^p_{\ell^p}(X_g) \leq B(\epsilon)^{p-1} \cdot (6(g-1)C(\frac{\epsilon}{2})).$$

In the proof of Theorem \ref{lbfme-0} we choose $B(\epsilon)=32C(\frac{\epsilon}{2})$. Thus,

$$||\tilde{Q}||^p_{\ell^p}(X_g) \leq 32^{p-1}\cdot (C(\frac{\epsilon}{2}))^p \cdot 6(g-1)\leq (B(\epsilon))^p \cdot g.$$

The conclusion follows by taking a $p$-th root.

\end{proof}

\begin{remark}
Theorem \ref{lbfme-0} shows the Weil-Petersson curvature operators, restricted to the thick part of the moduli space, are uniformly bounded by a constant that depends only on the bound on prescribed thickness (the lower bound in injectivity radius) of the surfaces in the given thick part, but not on the topology of the underlying surface.  It is worth noting that the constant $B(\epsilon) \asymp \frac{1}{\epsilon^2}$ tends to infinity as $\epsilon \to 0$.  Of course, it is known that sectional curvatures (the diagonal elements for the curvature operator $\tilde{Q}$) decay at worst with order $O(\frac{-1}{\epsilon})$. (One can see \cite{Huang1} and \cite{Wolpert5} for details.) We do not know if this bound of $O(\frac{-1}{\epsilon})$, sharp in the case of sectional curvatures, also extends to the broader setting of the curvature operator. On the other hand, as $\epsilon \to \infty$, the constant $B(\epsilon) \to \frac{24}{\pi} > \frac{1}{2\pi}$, which is consistent with the bound we found for a special element of $\wedge^2T_{X}\Teich(S_g)$ in Theorem~\ref{uubf-0}.
\end{remark}

Let $R$ be the Riemannian curvature tensor of the Weil-Petersson metric. Let $X \in \Teich(S_g)$ and $\{e_{i}\}_{i=1}^{6g-6}$ be an orthonormal basis of  $T_X\Teich(S_g)$. Recall that $R_{ijkl}:=<R(e_{i},e_{j})e_{k},e_{l}>$. Define the norm $||R||_{X}$ of the curvature tensor $R$ on $X$ by 
\[||R||_{X}=\sup\max_{1\leq i,j,k,l\leq (6g-6)}{|R_{ijkl}|}\]   
where the supremum runs over all the orthonormal bases of $T_X\Teich(S_g)$.

A simple application of Theorem \ref{lbfme-0} is 
\begin{corollary}\label{avfob}
Let $X\in \Teich(S_g)$ and $R$ be the Riemannian curvature tensor of the Weil-Petersson metric. Then,
\[||R||_{X}\leq \widehat{B}(\inj(X))\]
where $\inj(X)$ is the injectivity radius of $X$ and $\widehat{B}(\inj(X))$ is a function of $\inj(X)$.
\end{corollary}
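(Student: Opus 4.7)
The plan is to pass from the componentwise bound on $R_{ijkl}$ to the operator norm of $\tilde{Q}$, which was already estimated in Theorem~\ref{lbfme-0}. Given any orthonormal basis $\{e_i\}_{i=1}^{6g-6}$ of $T_X\Teich(S_g)$, equation \eqref{eqn: mowedge} implies that $\{e_i \wedge e_j\}_{1\leq i<j \leq 6g-6}$ is an orthonormal basis of $\wedge^2 T_X\Teich(S_g)$ with respect to $\langle\cdot,\cdot\rangle_{eu}$. By the definition of the associated curvature form and the symmetry $Q(A,B) = \langle \tilde{Q}(A), B\rangle_{eu}$, we may write
\[
R_{ijkl} = \langle R(e_i,e_j)e_k, e_l\rangle = Q(e_i\wedge e_j, e_k\wedge e_l) = \langle \tilde{Q}(e_i\wedge e_j), e_k\wedge e_l\rangle_{eu}.
\]

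First, I would apply Cauchy--Schwarz together with the fact that $\|e_i\wedge e_j\|_{eu} = \|e_k\wedge e_l\|_{eu} = 1$ to obtain
\[
|R_{ijkl}| \leq \|\tilde{Q}(e_i\wedge e_j)\|_{eu} \leq \|\tilde{Q}\|_{\mathrm{op}},
\]
where $\|\tilde{Q}\|_{\mathrm{op}}$ denotes the operator norm of $\tilde{Q}$ on $\wedge^2 T_X\Teich(S_g)$. Since $\tilde{Q}$ is self-adjoint (Lemma~\ref{bfoco}) and negative semi-definite (Theorem~\ref{conp}), its operator norm equals $|\lambda_{\min}(X)|$, the absolute value of its most negative eigenvalue. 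This bound is independent of the orthonormal basis $\{e_i\}$, so taking the supremum yields $\|R\|_X \leq |\lambda_{\min}(X)|$.

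Next I would invoke the explicit form of the lower bound for $\lambda_{\min}(X)$ produced in the proof of Theorem~\ref{lbfme-0}. Since $X \in \Teich(S_g)^{\geq 2\inj(X)}$, that proof gives $\lambda_{\min}(X) \geq -32 C(\inj(X))$, with $C(\inj(X))$ the explicit function from Definition~\ref{defn:C-inj}. Combining with the previous display, I would set
\[
\widehat{B}(\inj(X)) := 32\, C(\inj(X)),
\]
so that $\|R\|_X \leq \widehat{B}(\inj(X))$, which is the desired conclusion. The argument is essentially a reorganization of Theorem~\ref{lbfme-0} into a pointwise statement about all tensor components rather than just the extremal eigenvalue; the only nontrivial content is the observation that the operator norm of $\tilde{Q}$ dominates each $|R_{ijkl}|$, so no new obstacles arise beyond those already resolved in Section~\ref{s-lb}.
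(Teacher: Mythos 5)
Your proof is correct and follows essentially the same route as the paper: write $R_{ijkl}=\langle\tilde{Q}(e_i\wedge e_j),e_k\wedge e_l\rangle_{eu}$, apply Cauchy--Schwarz to bound this by the operator norm of $\tilde{Q}$ (equal to $|\lambda_{\min}(X)|$ by self-adjointness), and then invoke the bound from Theorem~\ref{lbfme-0}. Your constant $32\,C(\inj(X))$ is exactly the paper's $B(2\inj(X))$ since $B(\epsilon)=32\,C(\epsilon/2)$, so the two arguments coincide.
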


\begin{proof}
Set $||Q(X)||:=\sup_{A\in \wedge^2 T_X\Teich(S_g),\|A\|_{eu}=1}|<\tilde{Q}(A),A>|$. Theorem~\ref{lbfme-0} gives that $||Q(X)|| \leq B(2\inj(X))$.
Let $\{e_{i}\}_{i=1}^{6g-6}$ be an orthonormal basis of  $T_X\Teich(S_g)$. Then the Cauchy Schwarz inequality leads to
\begin{eqnarray*}
|R_{ijkl}|&=&|<\tilde{Q}(e_{i}\wedge e_{j}),e_{k}\wedge e_{l}>|\\
&\leq& |\tilde{Q}(e_{i}\wedge e_{j})|\cdot |e_{k}\wedge e_{l}|\leq||Q(X)|| \leq B(2\inj(X)).
\end{eqnarray*} 
Since the indices $\{i,j,k,l\}$ are arbitrary, the conclusion follows by choosing $\widehat{B}(\inj(X))= B(2\inj(X))$.
\end{proof}

\begin{remark}
For the case that $\{i=k,j=l\}$, $R_{ijij}$ is the sectional curvature (using the orthonormality of the coordinate system). Huang in \cite{Huang} proved the the sectional curvature, restricted on the thick part of the moduli space, is uniformly bounded from below by using harmonic maps.
\end{remark}

The Weil-Petersson metric is not complete (see \cite{Chu, Wol75}). We let $\overline{\Teich(S_g)}$ be the metric completion of $\Teich(S_g)$ and $\partial\overline{\Teich(S_g)}$ be the frontier of $\overline{\Teich(S_g)}$: this frontier is composed of products of lower dimensional Teichm\"uller spaces (one can see \cite{Mas76, Wolpert3, Wolpert4} for more details). For any $X\in \Teich(S_g)$, the following quantitative version of estimation on the distance between $X$ and $\partial\overline{\Teich(S_g)}$ is provided by Wolpert. 

\begin{theorem}[\cite{Wol08}]\label{eowfd}
$\dist(X,\partial\overline{\Teich(S_g)})\leq \sqrt{4\pi \cdot \inj(X)}.$
\end{theorem}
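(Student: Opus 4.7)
The plan is to produce an explicit Weil-Petersson-short path from $X$ to a point of $\partial\overline{\Teich(S_g)}$, constructed by pinching a geodesic realizing the systole of $X$. Choose a simple closed geodesic $\alpha$ on $X$ with $\ell_\alpha(X)=\systole(X)=2\inj(X)$, and let $\mathcal{B}_\alpha\subset\partial\overline{\Teich(S_g)}$ denote the (lower-dimensional) stratum of noded surfaces obtained by collapsing $\alpha$. Since $\dist(X,\partial\overline{\Teich(S_g)})\leq \dist(X,\mathcal{B}_\alpha)$, it suffices to bound the distance from $X$ to $\mathcal{B}_\alpha$.

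Fix Fenchel-Nielsen coordinates $(\ell_\beta,\tau_\beta)_\beta$ on $\Teich(S_g)$ adapted to a pants decomposition containing $\alpha$, and define the pinching path $\gamma:[0,\ell_\alpha(X)]\to\overline{\Teich(S_g)}$ by setting $\gamma(\ell_\alpha(X))=X$ and letting $\gamma(s)$ be the surface obtained from $X$ by decreasing $\ell_\alpha$ to the value $s$ while keeping all other Fenchel-Nielsen coordinates fixed. Then $\gamma(0)\in\mathcal{B}_\alpha$, so
\[
\dist(X,\mathcal{B}_\alpha)\;\leq\;\mathrm{length}_{WP}(\gamma)\;=\;\int_0^{\ell_\alpha(X)}\!\sqrt{g_{WP}(\partial_{\ell_\alpha},\partial_{\ell_\alpha})(\gamma(s))}\,ds.
\]

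The heart of the proof is then to estimate the integrand using Wolpert's asymptotic formulas for the Weil-Petersson metric in Fenchel-Nielsen coordinates near the pinching locus. Dual to the limiting statement $\|\grad\sqrt{\ell_\alpha}\|_{WP}^2\to 1/(4\pi)$ as $\ell_\alpha\to 0$ quoted earlier in this paper is a companion asymptotic $g_{WP}(\partial_{\ell_\alpha},\partial_{\ell_\alpha})\sim \tfrac{\pi}{2\ell_\alpha}$ in the pinching limit; Wolpert's sharper expansion of the metric coefficients in \cite{Wol08} controls the lower-order correction terms with the correct sign, yielding the global inequality $g_{WP}(\partial_{\ell_\alpha},\partial_{\ell_\alpha})(\gamma(s))\leq \pi/(2s)$ uniformly along the pinching path. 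A direct computation then gives
\[
\mathrm{length}_{WP}(\gamma)\;\leq\;\int_0^{\ell_\alpha(X)}\!\sqrt{\tfrac{\pi}{2s}}\,ds\;=\;\sqrt{2\pi\,\ell_\alpha(X)}\;=\;\sqrt{4\pi\,\inj(X)},
\]
proving the theorem.

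The main obstacle is precisely the global (not merely asymptotic) character of the estimate on $g_{WP}(\partial_{\ell_\alpha},\partial_{\ell_\alpha})$: Wolpert's limiting formulas describe only the leading-order behavior as $\ell_\alpha\to 0$, and establishing a uniform bound throughout the deformation range $s\in(0,\ell_\alpha(X)]$ requires a careful accounting of lower-order corrections in his metric expansion, as well as handling the non-diagonal cross terms between $\partial_{\ell_\alpha}$ and the other Fenchel-Nielsen coordinate directions. Once these corrections are verified to have the correct sign, the length bound along the explicit pinching path follows by a direct integration as above.
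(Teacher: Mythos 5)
The paper offers no proof of this statement; it is imported verbatim from Wolpert \cite{Wol08}, so your proposal has to be measured against Wolpert's argument. Your overall shape --- exhibit an explicit path from $X$ into the stratum where a systolic geodesic $\alpha$ is pinched and bound its Weil--Petersson length --- is the right one, and the arithmetic at the end is consistent: $\int_0^{\ell}\sqrt{\pi/(2s)}\,ds=\sqrt{2\pi\ell}$ and $\ell_\alpha(X)=2\inj(X)$ do give $\sqrt{4\pi\inj(X)}$. But the step carrying all of the content, the claimed \emph{global} inequality $g_{WP}(\partial_{\ell_\alpha},\partial_{\ell_\alpha})(\gamma(s))\leq \pi/(2s)$ along the entire pinching path, is asserted rather than proved, and it does not follow from the expansions you invoke. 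Wolpert's description of the metric in Fenchel--Nielsen coordinates is an asymptotic statement valid in a neighborhood of the stratum; away from that neighborhood the coefficient $g_{\ell_\alpha\ell_\alpha}$ depends on the entire pants decomposition and twist normalization, and no sign-controlled global expansion of the kind you need is available. (Cauchy--Schwarz only gives the opposite-type bound $g_{WP}(\partial_{\ell_\alpha},\partial_{\ell_\alpha})\geq \|\grad \ell_\alpha\|_{WP}^{-2}$, reflecting that $\partial_{\ell_\alpha}$ need not be parallel to $\grad\ell_\alpha$.) You flag this obstacle yourself in your final paragraph, but flagging it does not close it: as written, the unproved metric bound is essentially as strong as the theorem.

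The device that makes Wolpert's estimate global rather than asymptotic is to abandon the Fenchel--Nielsen coordinate line in favor of the gradient flow of $\ell_\alpha^{1/2}$, together with Riera's exact formula for $\langle\grad\ell_\alpha,\grad\ell_\alpha\rangle_{WP}$: that formula expresses the quantity as $\tfrac{2}{\pi}\ell_\alpha$ plus a sum of manifestly positive terms over double cosets, so the pointwise bound $\|\grad\ell_\alpha^{1/2}\|_{WP}^{2}\geq\tfrac{1}{2\pi}$ holds everywhere on $\Teich(S_g)$ with nothing left to estimate. Reparametrizing the flow of $-\grad\ell_\alpha^{1/2}$ so that $\ell_\alpha^{1/2}$ decreases at unit rate, the path to the stratum has length $\int_0^{\ell_\alpha^{1/2}(X)}\|\grad\ell_\alpha^{1/2}\|^{-1}\,ds\leq\sqrt{2\pi}\,\ell_\alpha^{1/2}(X)=\sqrt{4\pi\inj(X)}$. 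If you want to retain your framework, replace your coordinate path by this gradient flow and cite Riera's formula for the positivity of the correction terms; that is the single missing input. As a minor separate point, the value $\tfrac{1}{\sqrt{4\pi}}$ for $\lim\|\grad\ell_\alpha^{1/2}\|$ quoted earlier in this paper sits uneasily with the asymptotic $g_{\ell_\alpha\ell_\alpha}\sim\pi/(2\ell_\alpha)$ you use, since by duality the latter corresponds to $\|\grad\ell_\alpha^{1/2}\|^{2}\to\tfrac{1}{2\pi}$; you should fix one normalization and check your constants against it.
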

The following consequence is motivated by Proposition 4.22 in \cite{BHW12}.
\begin{corollary}\label{cbbd}
There exists a constant $C>0$, which is independent of the topology of the surface, such that the norm $||R||_{X}$ of the curvature tensor $R$ on $X$ satisfies
\[||R||_{X}\leq \max\{C,\frac{C}{\dist(X,\partial\overline{\Teich(S_g)})^4}\}.\]
\end{corollary}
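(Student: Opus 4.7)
The plan is to bootstrap from Corollary~\ref{avfob}, which already gives $\|R\|_X \le \widehat{B}(\inj(X)) = 32\, C(\inj(X))$, by analyzing the behavior of the explicit function $C(\inj(X))$ in the two regimes $\inj(X)$ bounded below and $\inj(X) \to 0$, and in the latter regime converting the $\inj(X)^{-2}$ blow-up into a $\dist(X,\partial\overline{\Teich(S_g)})^{-4}$ blow-up via Wolpert's Theorem~\ref{eowfd}.

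First I would fix a threshold, say $\iota = 1$, and split into cases. If $\inj(X) \ge 1$, then by Definition~\ref{defn:C-inj} and the remark immediately following it, $C(\inj(X))$ is continuous on $[1,\infty)$ and tends to $\tfrac{3}{4\pi}$ as $\inj(X)\to\infty$, so it is bounded above by some absolute constant $C_1$. Hence $\|R\|_X \le 32\, C_1$ in this case, contributing to the first alternative in the max.

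The substantive case is $\inj(X) < 1$. Here the remark after Definition~\ref{defn:C-inj} gives the asymptotic $C(\inj(X)) = \tfrac{1}{\pi \inj(X)^2} + o(\inj(X)^{-2})$; a direct computation from the closed-form expression shows this asymptotic is in fact a bound of the form $C(\inj(X)) \le C_2/\inj(X)^2$ throughout $(0,1]$, for an absolute constant $C_2$ (one simply checks the elementary inequality $1 - \bigl(\tfrac{4e^r}{(1+e^r)^2}\bigr)^3 \ge c\, r^2$ on $(0,1]$ for some $c>0$, e.g.\ by Taylor expansion at $r=0$ and compactness on any $[\delta,1]$). Therefore $\|R\|_X \le 32 C_2 / \inj(X)^2$. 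Now invoke Theorem~\ref{eowfd}: $\dist(X,\partial\overline{\Teich(S_g)})^2 \le 4\pi \inj(X)$, which rearranges to $\inj(X)^{-2} \le (4\pi)^2 \dist(X,\partial\overline{\Teich(S_g)})^{-4}$. Combining these gives $\|R\|_X \le \tfrac{512\pi\, C_2}{\dist(X,\partial\overline{\Teich(S_g)})^4}$, contributing to the second alternative.

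Choosing $C = \max\{32 C_1,\; 512\pi\, C_2\}$ yields the desired uniform bound $\|R\|_X \le \max\{C,\, C/\dist(X,\partial\overline{\Teich(S_g)})^4\}$, with $C$ independent of the genus since both $C_1$ and $C_2$ come purely from the explicit function $C(\param)$ of Definition~\ref{defn:C-inj} and from Wolpert's topology-free distance bound. The only mildly technical step is the explicit inequality $C(r) \le C_2/r^2$ on $(0,1]$; this is the place where one must do a small amount of elementary calculus rather than invoke a result from the paper, but it is essentially the content of the asymptotic already recorded in the remark after Definition~\ref{defn:C-inj}. All other ingredients are direct citations.
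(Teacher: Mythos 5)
Your proposal is correct and follows essentially the same route as the paper: split on a threshold for $\inj(X)$, bound $\|R\|_X$ by $32\,C(\inj(X))$ via Corollary~\ref{avfob} in both regimes, and in the thin regime convert the $\inj(X)^{-2}$ blow-up of $C(\inj(X))$ into a $\dist(X,\partial\overline{\Teich(S_g)})^{-4}$ bound using Theorem~\ref{eowfd} (the paper takes $\epsilon_0$ with $C(\epsilon)\le \tfrac{2}{\pi\epsilon^2}$ on $(0,\epsilon_0]$ and lands on $C=\max\{B(2\epsilon_0),1024\pi\}$). The only blemish is a harmless arithmetic slip: $32C_2\cdot(4\pi)^2=512\pi^2C_2$, not $512\pi C_2$, which is absorbed into the final max in any case.
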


\begin{proof}

We observed after stating Proposition~\ref{lnubw} that as $\epsilon \to 0$, the constant $C(\epsilon)$ is asymptotic to $\frac{1}{\pi \epsilon^2}$.
Fix $\epsilon_{0}>0$ such that $C(\epsilon)\leq \frac{2}{\pi \epsilon^2}$ for all $\epsilon \in (0,\epsilon_0]$. Then for any $X\in \Teich(S_g)$, there are two possibilities: 

Case 1: If the injectivity radius $\inj(X)\geq \epsilon_0$, the inequality $||R||_{X}\leq \widehat{B}(\epsilon_0)=B(2\epsilon_0)$ follows from Corollary \ref{avfob}.

Case 2:  If the injectivity radius $\inj(X)\leq \epsilon_0$, then from Corollary~\ref{avfob} we find,
\begin{eqnarray*}
||R||_{X}\leq 32 C(\inj(X))\leq \frac{64}{\pi \cdot \inj(X)^2} \leq \frac{1024\pi}{\dist(X,\partial\overline{\Teich(S_g)})^4},
\end{eqnarray*}
where we apply Theorem \ref{eowfd} for the last inequality. The conclusion follows by choosing $C=\max\{B(2\epsilon_0), 1024\pi\}$.
\end{proof}

\begin{remark}
We emphasize here that the constant $C$ is independent not only of the genus $g$, but also of any choice of neighborhood of $\partial\overline{\Teich(S_g)}$.
\end{remark}

\section{Eigenvalues of the Weil-Petersson curvature operators on thick surfaces}\label{zpoev}

Our goal in this final section of this part of the paper is to prove the remaining results, Theorem~\ref{largei-0} and Theorem~\ref{fftz-0}, on the \wep curvature operator. The proofs are reasonably straightforward consequences of the results and tools we have already developed.

We first prove Theorem~\ref{largei-0} which bounds the smallest eigenvalue $\lambda_{\min}$ of the Weil-Petersson curvature operator on surfaces of sufficiently large genus whose injectivity radii are increasing without bound.

\begin{proof}[Proof of Theorem \ref{largei-0}]
First, one computes that the constant $B(\epsilon)$ in Theorem~\ref{lbfme-0} satisfies
\begin{eqnarray*}\label{llimit}
\lim_{\epsilon \to \infty} B(\epsilon)=\frac{24}{\pi}.
\end{eqnarray*}
The conclusion then follows from Theorem \ref{uubf-0}, Theorem \ref{lbfme-0} and the limit above.
\end{proof}

Next, fix $\epsilon>0$; then the Mumford compactness 
theorem implies the $\epsilon$-thick part $\M^{\ge \epsilon}$ of the moduli space of a closed surface is compact. Thus the following functions are well-defined:
\[\underline{\lambda}^{\epsilon}_{i}(g):=\min_{X\in Teich(S_g)^{\geq \epsilon}}{\lambda_{i}(X)}, \ \  \forall 1\leq i \leq (3g-3)^2\]
and
\[\overline{\lambda}^{\epsilon}_{i}(g):=\max_{X\in Teich(S_g)^{\geq \epsilon}}{\lambda_{i}(X)}, \ \  \forall 1\leq i \leq (3g-3)^2.\]
Of course
\[\underline{\lambda}^{\epsilon}_{i}(g) \leq \overline{\lambda}^{\epsilon}_{i}(g),\]
and Theorems~\ref{uubf-0} and \ref{lbfme-0} imply
\[-B(\epsilon)\leq \underline{\lambda}^{\epsilon}_{(3g-3)^2}(g) \leq \overline{\lambda}^{\epsilon}_{(3g-3)^2}(g)\leq \frac{-1}{2\pi}.\]

Our focus for the rest of this section then turns from the smallest eigenvalue $\lambda_{(3g-3)^2}= \lambda_{\min}$ to the asymptotic properties for $\underline{\lambda}^{\epsilon}_{i}(g)$ and $\overline{\lambda}^{\epsilon}_{i}(g)$ as $g$ goes to infinity where the index $i$ is not maximal but instead small.

 Let
 \[i:\{1,2,\cdots,(3g-3)^2\} \rightarrow \{1,2,\cdots,(3g-3)^2\}\]
be a function. Recall that Theorem~\ref{fftz-0} states that if the index $i$ is not close to the maximal value $(3g-3)^2$ in the limit sense, then the bound $\underline{\lambda}^{\epsilon}_{i}(g)$ tends to zero as $g$ goes to infinity. 

\begin{proof}[Proof of Theorem \ref{fftz-0}]
Since $1\leq i(g)\leq (3g-3)^2$, It follows from Theorem \ref{(3g-3)^2} that $\overline{\lambda}^{\epsilon}_{i(g)}(g)<0$. 
For the lower bound, let $X \in \Teich(S_g)^{\geq \epsilon}$ and recall that $\{\lambda_{i}(X)\}_{i=1}^{(3g-3)^2}$ is the set of the non-zero eigenvalues of the Weil-Petersson curvature operator at $X$ with $\lambda_{i+1}(X)\leq\lambda_{i}(X)$. Proposition~3.3 in \cite{Teo08} tells us that 
\begin{eqnarray*}
-6(g-1)C(\frac{\epsilon}{2})\leq \Sca(X) =\sum_{i=1}^{(3g-3)^2}{\lambda_{i}(X)},
\end{eqnarray*} 
where $C(\frac{\epsilon}{2})$ is defined in Proposition~\ref{lnubw}. Since $\lambda_{i}(X)<0$, we have that
\begin{eqnarray*}
-6(g-1)C(\frac{\epsilon}{2}) \leq \sum_{i=f(g)}^{(3g-3)^2}{\lambda_{i}(X)}\leq ((3g-3)^2-i(g)+1)\lambda_{f(g)}(X).
\end{eqnarray*} 
We rewrite this last inequality as
\begin{eqnarray*}
\frac{-6C(\frac{\epsilon}{2})}{g}\cdot \frac{g(g-1)}{(3g-3)^2-i(g)+1}\leq \lambda_{f(g)}(X).
\end{eqnarray*} 
Then, since $\limsup_{g\to \infty} \frac{g(g-1)}{(3g-3)^2-i(g)+1}=\frac{1}{9(1-\alpha)}>0$ and $X\in \Teich(S_g)^{\geq \epsilon}$ is arbitrary, the conclusion follows by choosing $B(\alpha,\epsilon)=\frac{4C(\frac{\epsilon}{2})}{3(1-\alpha)}$.
\end{proof}

\begin{remark}

(1). Recall that as $\epsilon \to 0$, the constant $C(\epsilon)$ behaves like 
\[\lim_{\epsilon \to 0}\frac{C(\epsilon)}{\frac{1}{\pi \epsilon^2}}=1.\]
The proof of Theorem \ref{fftz-0} then yields the following more general statement.
\begin{theorem}\label{fffz-0-1}

If the function $f$ satisfies $\limsup_{g\to \infty}\frac{f(g)}{9g^2}=\alpha<1$. Let $X_g$ be a sequence of Riemannian surfaces whose injectivity radii satisfy $\lim_{g\to \infty}\frac{1}{g\cdot \inj(X_g)^2}=0$. Then,
\[\lim_{g\to \infty} \lambda_{f(g)}(X_g)=0.\] 

\end{theorem}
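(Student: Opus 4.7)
The plan is to repeat the proof of Theorem~\ref{fftz-0} while keeping the injectivity radius as a variable instead of freezing it at the thickness constant. By Theorem~\ref{(3g-3)^2} the curvature operator at $X_g$ has exactly $(3g-3)^2$ strictly negative eigenvalues $\lambda_{(3g-3)^2}(X_g)\leq\cdots\leq\lambda_1(X_g)<0$, and by Lemma~\ref{bfoco} their sum equals $Sca(X_g)$. Teo's Proposition~3.3 in \cite{Teo08}, the input already used in the proof of Theorem~\ref{fftz-0}, gives
\[Sca(X_g)\geq -6(g-1)\,C(\inj(X_g)),\]
so dropping the first $f(g)-1$ (negative) terms only enlarges the sum, yielding
\[-6(g-1)\,C(\inj(X_g))\leq \sum_{i=f(g)}^{(3g-3)^2}\lambda_i(X_g)\leq \bigl((3g-3)^2-f(g)+1\bigr)\,\lambda_{f(g)}(X_g).\]

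From here I would divide by the positive factor $(3g-3)^2-f(g)+1$ to isolate $\lambda_{f(g)}(X_g)$. Fixing any $\beta\in(\alpha,1)$, the hypothesis $\limsup_{g\to\infty}f(g)/(9g^2)=\alpha$ implies $f(g)<9\beta g^2$ for all sufficiently large $g$, hence the denominator is at least of order $9(1-\beta)g^2$. Consequently there is an explicit constant $K=K(\beta)>0$ with
\[0\,>\,\lambda_{f(g)}(X_g)\,\geq\,-K\cdot \frac{C(\inj(X_g))}{g}\]
for all large $g$, reducing the theorem to the claim $C(\inj(X_g))/g\to 0$.

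The remaining step is the only genuinely new point beyond Theorem~\ref{fftz-0}. The remark preceding the theorem gives $C(\inj)\sim 1/(\pi\inj^2)$ as $\inj\to 0$, while $C(\inj)$ stays bounded (in fact tends to $3/(4\pi)$) as $\inj\to\infty$, so there is a universal constant $M$ with $C(\inj)\leq M\max(1,\inj^{-2})$ for all $\inj>0$. Therefore
\[\frac{C(\inj(X_g))}{g}\,\leq\, M\cdot\max\!\left(\frac{1}{g},\;\frac{1}{g\cdot \inj(X_g)^2}\right)\,\longrightarrow\, 0,\]
the convergence being exactly what the injectivity-radius hypothesis provides. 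Squeezing $\lambda_{f(g)}(X_g)$ between zero and a quantity tending to zero completes the proof. The main obstacle lies in this calibration: the hypothesis $1/(g\cdot\inj(X_g)^2)\to 0$ is tailored precisely to the $\inj^{-2}$ blow-up of $C(\inj)$ near zero, and it represents the sharpest condition under which the averaged-trace argument of Theorem~\ref{fftz-0} survives the loss of a uniform lower bound on $\inj$.
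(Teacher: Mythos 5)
Your proposal is correct and follows essentially the same route as the paper, which simply notes that the trace argument from Theorem~\ref{fftz-0} goes through with $C(\inj(X_g))$ in place of $C(\epsilon/2)$ and that the asymptotic $C(\epsilon)\sim 1/(\pi\epsilon^2)$ as $\epsilon\to 0$ makes $C(\inj(X_g))/g\to 0$ under the stated hypothesis. Your write-up just makes explicit the uniform bound $C(\inj)\leq M\max(1,\inj^{-2})$ that the paper leaves implicit.
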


This theorem tells us that as the genus goes to infinity, even in certain portions of the thin parts of the moduli spaces, we could still have 
\[\lim_{g\to \infty} \overline{\lambda}^{\epsilon(g)}_{i}(g)=\lim_{g\to \infty} \underline{\lambda}^{\epsilon(g)}_{i}(g)=0, \ \  for \ all \ \ 1\leq i \leq 8g^2\]
provided that $\lim_{g\to \infty}\frac{1}{g\cdot \epsilon(g)^2}=0.$ \\

(2). \textsl{We do not know any asymptotic properties of $\underline{\lambda}^{\epsilon}_{i}(g)$ and $\overline{\lambda}^{\epsilon}_{i}(g)$ as $g$ goes to infinity when $i$ is close to $(3g-3)^2$; for example $\underline{\lambda}^{\epsilon}_{(3g-3)^2-g}(g)$ and $\overline{\lambda}^{\epsilon}_{(3g-3)^2-g}(g)$.  It is interesting to study them.}
\end{remark}

\part{Uniform bounds on the Weil-Petersson holomorphic sectional curvatures}

\section{Uniform estimates on harmonic Beltrami differentials.}\label{hmsc}
Recall from section~\ref{sec:suff thick} that a surface is $C$-sufficiently thick if $\inj(X) \ge C$, and that there are non-trivial subspaces of the Teichm\"uller spaces $\Teich(S_g)$ composed of sufficiently thick surfaces, once the genus $g$ is chosen sufficiently large. In this section will construct holomorphic lines
in the tangent spaces of sufficiently thick surfaces (of course, necessarily of large genera and injectivity radii) such that those sections have uniformly pinched negative holomorphic sectional curvatures. The pinching constants will be independent of the genus.

To begin, we pick up the thread on which we ended section~\ref{subsec:hol-sec-curv}, that related holomorphic sectional curvatures to powers of normalized $L^{\infty}$ norms of harmonic Beltrami differentials.  In particular, we will directly apply Proposition~\ref{ratio} to a specific choice of harmonic Beltrami differential $\mu_g$, whose construction we begin immediately below.  As we noted at the end of section~\ref{subsec:hol-sec-curv}, our (main) goal is to bound the $L^2$ norm of $\mu_g$ from above and the $L^\infty$ norm of $\mu_g$ from below.

\subsection{Poincar\'e series} Let $X_g \in \Teich(S_g)$ and let $\Gamma_g \subset \Aut(\D)$ be a representation of $\pi_1(X_g)$ into the (Mobius) group $\Aut(\D)$ of automorphisms of the hyperbolic disk $\D$ where $\pi: \D\rightarrow X_g$ is the universal covering map. Let $H(\D)$ denote the set of holomorphic functions on the unit disk $\D$ and $H^0(X_g, K^2)$ be the space of holomorphic quadratic differentials on $X_g$.

Recall the Theta-operator $\Theta$ is defined as
\begin{eqnarray*}
\Theta: \ H(\D)&\rightarrow& H^0(X_g, K^2)\\
              f(z) &\mapsto& \sum_{\gamma \in \Gamma_g} f(\gamma(z))\cdot \gamma'(z)^2.
\end{eqnarray*}

It is well-known that $\Theta (f)(z)$ is well-defined if $f$ is integrable on $\D$, and this operator is surjective on its domain. (See Theorem 7.2 in \cite{IT92-book} for details.)

Fix the constant function $f(z)\equiv 1$ be on $\D$. Then
\[\Theta(f(z)) = \Theta(1)(z)=\sum_{\gamma \in \Gamma_g} \gamma'(z)^2\]
whose corresponding harmonic Beltrami differential is

\begin{equation}\label{tps}
\mu_g(z):=\frac{\overline{\Theta(1)(z)}}{\rho(z)}=\sum_{\gamma \in \Gamma_g} \frac{\overline{\gamma'(z)^2}}{\rho(z)}
\end{equation}
where $\rho(z)=\frac{4}{(1-|z|^2)^2}$ is the hyperbolic metric on the disk.

Ahlfors in \cite{Ahlfors64} showed that $|\mu_g|$ is bounded above by a constant depending on $\Gamma_g$. In this section we will observe that $|\mu_g|$ is uniformly bounded above if we assume that the injectivity radius $\inj(X_g) \to \infty$ as $g\to \infty$.

Let us close this subsection by recalling Ahlfors' method in \cite{Ahlfors64}, a technique which we will adapt for the heart of our argument.

\textbf{Ahlfors' Method:} From the triangle inequality we know that 
\begin{equation} \label{eqn:ub-mu}
|\mu_g(z)|\leq \sum_{\gamma \in \Gamma_g} \frac{|\gamma'(z)|^2}{\rho(z)}.
\end{equation}
Then since $\rho(\gamma(z))|\gamma'(z)^2|=\rho(z)$ for any $\gamma \in \Gamma_g$, and $\rho(\zeta) = 4(1-|\zeta|^2)^{-2}$, we have

\begin{equation} \label{eqn:gamma-sum}
\sum_{\gamma \in \Gamma_g} \frac{|\gamma'(z)|^2}{\rho(z)}=\frac{1}{4} \sum_{\gamma \in \Gamma_g} (1-|\gamma(z)|^2)^2.
\end{equation}
Combining the above inequalities yields
\begin{equation} \label{eqn:ub-mu-1}
|\mu_g(z)|\leq\frac{1}{4} \sum_{\gamma \in \Gamma_g} (1-|\gamma(z)|^2)^2.
\end{equation}

Let $\Delta$ be the (Euclidean) Laplace operator on the (Euclidean) disk. Then a direct computation shows

\[\Delta(\sum_{\gamma \in \Gamma_g} (1-|\gamma(z)|^2)^2)=8\cdot \sum_{\gamma \in \Gamma_g} (2|\gamma(z)|^2-1)|\gamma'(z)|^2.\]

Note that the terms on the right side are non-negative when $|\gamma(z)|^2 \ge \frac{1}{2}$.  With that in mind, let $B(0,\frac{1}{\sqrt{2}}):=\{z\in \D; \  |z|\leq \frac{1}{\sqrt{2}}\}$ 
be the ball of Euclidean radius $\frac{1}{\sqrt{2}}$
and let
$V :=\cup_{\gamma \in \Gamma_g}\gamma^{-1}\circ B(0,\frac{1}{\sqrt{2}})$ be the pullbacks of this ball $B$ by the group $\Gamma_g$. The equation above gives that $\sum_{\gamma \in \Gamma_g} (1-|\gamma(z)|^2)^2$ is subharmonic in $\D-V$. Since both $\sum_{\gamma \in \Gamma_g} (1-|\gamma(z)|^2)^2$ and $V$ are $\Gamma_g$-invariant, and $\Gamma_g$ is cocompact, we find

\begin{equation}\label{cis}
\sup_{z\in \D}\sum_{\gamma \in \Gamma_g} (1-|\gamma(z)|^2)^2=\sup_{z\in V}\sum_{\gamma \in \Gamma_g} (1-|\gamma(z)|^2)^2=\sup_{z\in B(0,\frac{1}{\sqrt{2}})}\sum_{\gamma \in \Gamma_g} (1-|\gamma(z)|^2)^2
\end{equation}
which in particular is bounded above by a constant depending on $\Gamma_g$: we display a version of the argument that the last term on the right is bounded in the next subsection: see especially equations \eqref{cis-2} - \eqref{cis-4-1}.

\subsection{Uniform upper bound for $\mu_g$} Recall the relation between the Euclidean distance and the hyperbolic distance is 
\[dist_{\D}(0,z)=\ln{\frac{1+|z|}{1-|z|}}.\]

For the rest of this section, we consider a surface $X_g$ which is sufficiently thick -- we retain the index $g$ as a reminder that such surfaces have large genus $g$ and we are computing curvatures of the \tec space $\Teich(S_g)$ with all of the normalizations that accompany that choice. (This will also be useful for a remark at the end of this section.) Therefore we may assume that we may find a ball $B(0,\frac{1}{\sqrt{2}}+\frac{1}{10}) \subset F_g$  where $F_g\subset \D$ is the (Dirichlet) fundamental domain, centered at the origin, of $X_g$. Applying the triangle inequality to equation (\ref{cis}), we have

\begin{equation}\label{cis-1}
\sup_{z\in \D}\sum_{\gamma \in \Gamma_g} (1-|\gamma(z)|^2)^2\leq 1+\sup_{z\in B(0,\frac{1}{\sqrt{2}})}\sum_{\gamma \neq e} (1-|\gamma(z)|^2)^2.
\end{equation}

As in Ahlfors' method, by applying the Laplace operator, we obtain

\begin{equation}\label{cis-2}
\Delta(\sum_{\gamma \neq e} (1-|\gamma(z)|^2)^2)=8\cdot \sum_{\gamma \neq e} (2|\gamma(z)|^2-1)|\gamma'(z)|^2.
\end{equation}

Because of our assumption on the large injectivity radius of $X_g$, we see that $B(0,\frac{1}{\sqrt{2}}+\frac{1}{10}) \subset F_g$. Thus for 
$z\in B(0,\frac{1}{\sqrt{2}})$, we see that 
$\gamma(z)\notin B(0,\frac{1}{\sqrt{2}})$ when $\gamma \ne e$: in particular, for such $\gamma$, we have $|\gamma(z)|^2 \ge \frac{1}{2}$. This implies that for
$z\in B(0,\frac{1}{\sqrt{2}}+ \frac{1}{10})$
we have

\begin{equation}\label{cis-3}
\Delta(\sum_{\gamma \neq e} (1-|\gamma(z)|^2)^2)\geq 0.
\end{equation}

Let $\zeta=\xi+\bf{i}\eta $. Thus, by the mean-value inequality, for any $z\in B(0,\frac{1}{\sqrt{2}})$, we have

\begin{eqnarray*}
\sum_{\gamma \neq e} (1-|\gamma(z)|^2)^2 &\leq& \frac{1}{B(z,\frac{1}{10})}\int_{B(z,\frac{1}{10})}\sum_{\gamma \neq e} (1-|\gamma(\zeta)|^2)^2d\xi d\eta\\
&=& \frac{100}{\pi}\int_{B(z,\frac{1}{10})}\sum_{\gamma \neq e} \frac{|\gamma'(\zeta)|^2}{\rho(\zeta)}d\xi d\eta \nonumber    \text{ by equation } \eqref{eqn:gamma-sum} \\
&\leq & \frac{25}{\pi}\int_{B(z,\frac{1}{10})}\sum_{\gamma \neq e} |\gamma'(\zeta)|^2d\xi d\eta \text{ since } \rho(z) = 4(1-|z|^2)^{-2} \ge 4 \nonumber\\
&=& \frac{25}{\pi}\int_{\cup_{\gamma \neq e} \gamma\circ B(z,\frac{1}{10})}d\xi d\eta \text{ after unfolding the sum }\nonumber\\
&\leq & \frac{25}{\pi}\int_{\D-F_g}d\xi d\eta \nonumber\\
&=& \frac{25}{\pi}(\pi-Area(F_g)). \nonumber
\end{eqnarray*}

Since $z\in B(0,\frac{1}{\sqrt{2}})$ is arbitrary,

\begin{eqnarray}\label{cis-4-1}
\sup_{z\in B(0,\frac{1}{\sqrt{2}})}\sum_{\gamma \neq e, \ \gamma \in \Gamma_g} (1-|\gamma(z)|^2)^2  \leq \frac{25}{\pi}(\pi-\Area(F_g)).
\end{eqnarray}

Finally, we invoke the hypothesis that the surface $X_g$ is sufficiently thick. Assuming that thickness, we may assume that the Euclidean area $\Area(F_g)$ of the fundamental domain $F_g$ is as close to $\pi$ as we wish, so the quantitative version of sufficiently thick that we will invoke will force
\begin{equation} \label{cis-4-2}
\frac{25}{\pi}(\pi-\Area(F_g)) < \frac{1}{4}.
\end{equation}
Then, this last equation, together with \eqref{cis-4-1}, provides

\begin{equation}\label{cis-5}
\sup_{z\in B(0,\frac{1}{\sqrt{2}})}\sum_{\gamma \neq e, \ \gamma \in \Gamma_g} (1-|\gamma(z)|^2)^2 < \frac{1}{4}.
\end{equation}

\begin{remark}
For the estimate \eqref{cis-4-2} to hold, we need the ball, centered at the origin, to have Euclidean radius $\sqrt{\frac{99}{100}}$, so that the surface is at least $\ln{\frac{10+\sqrt{99}}{10-\sqrt{99}}}$-thick.
\end{remark}

The following two propositions follow easily from the equations above.

\begin{proposition}\label{uubfps}
Let $X_g$ be sufficiently thick.  Then
\[\sup_{z\in \D}|\mu_g(z)|< \frac{5}{16}.\]
\end{proposition}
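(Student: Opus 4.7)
The plan is to assemble this bound directly from the estimates already established in the paragraphs preceding the statement; the inequality $\sup|\mu_g|<5/16$ is really the punchline of the Ahlfors-style subharmonicity argument just completed.

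First, I would recall the termwise triangle-inequality bound \eqref{eqn:ub-mu-1}, which states that
\[|\mu_g(z)|\leq\tfrac{1}{4}\sum_{\gamma\in\Gamma_g}(1-|\gamma(z)|^2)^2,\]
so it suffices to bound the Poincar\'e-type series $\sum_{\gamma\in\Gamma_g}(1-|\gamma(z)|^2)^2$ uniformly in $z\in\D$ by $5/4$. Next, I would invoke the reduction \eqref{cis-1}, which uses the $\Gamma_g$-invariance of the series together with the cocompactness argument embodied in \eqref{cis} to reduce the supremum over $\D$ to an estimate on $B(0,1/\sqrt{2})$, at the cost of adding the identity term (bounded by $1$ when $z=0$, and more generally by $1$ over $B(0,1/\sqrt{2})$ since $(1-|z|^2)^2\le 1$):
\[\sup_{z\in\D}\sum_{\gamma\in\Gamma_g}(1-|\gamma(z)|^2)^2\;\leq\;1+\sup_{z\in B(0,1/\sqrt{2})}\sum_{\gamma\neq e}(1-|\gamma(z)|^2)^2.\]

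Then I would apply the pointwise bound \eqref{cis-5}, which is exactly where the sufficient-thickness hypothesis enters: because $\inj(X_g)$ is large enough that $B(0,\tfrac{1}{\sqrt 2}+\tfrac{1}{10})\subset F_g$ and $\tfrac{25}{\pi}(\pi-\Area(F_g))<\tfrac14$, the mean-value inequality applied to the subharmonic function $\sum_{\gamma\neq e}(1-|\gamma(z)|^2)^2$ on $B(0,1/\sqrt{2})$ gives
\[\sup_{z\in B(0,1/\sqrt{2})}\sum_{\gamma\neq e}(1-|\gamma(z)|^2)^2<\tfrac14.\]

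Combining the two displayed inequalities yields $\sup_{z\in\D}\sum_{\gamma\in\Gamma_g}(1-|\gamma(z)|^2)^2<\tfrac{5}{4}$, and then multiplying through by $\tfrac14$ via \eqref{eqn:ub-mu-1} gives the desired bound
\[\sup_{z\in\D}|\mu_g(z)|<\tfrac{1}{4}\cdot\tfrac{5}{4}=\tfrac{5}{16}.\]
Since every ingredient is already in place, there is no real obstacle here; the only thing to be careful about is to observe that the identity contribution $(1-|z|^2)^2$ is bounded by $1$ uniformly on $\D$, so the splitting in \eqref{cis-1} is legitimate as stated. The argument is thus a short bookkeeping step that packages the preceding subharmonicity estimate into the clean uniform bound needed for the $L^\infty$ control of $\mu_g$ used in the next section.
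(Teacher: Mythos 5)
Your proposal is correct and follows essentially the same route as the paper: the triangle inequality reduces the claim to bounding $\frac14\sum_{\gamma}(1-|\gamma(z)|^2)^2$, the $\Gamma_g$-invariance and subharmonicity argument of \eqref{cis} reduces the supremum to $B(0,1/\sqrt2)$, and splitting off the identity term together with \eqref{cis-5} gives $\frac14(1+\frac14)=\frac{5}{16}$, exactly as in the paper's proof.
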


\begin{proof}
We compute
\begin{align*}
|\mu_g(z)| &\leq \sum_{\gamma \in \Gamma_g} \frac{|\gamma'(z)^2|}{\rho(z)} \text{ by } \eqref{tps}\\
&= \frac{1}{4} \sum_{\gamma \in \Gamma_g} (1-|\gamma(z)|^2)^2 \text{ by } \eqref{eqn:gamma-sum} \\
&\leq \frac{1}{4} \sup_{z\in \D}\sum_{\gamma \in \Gamma_g} (1-|\gamma(z)|^2)^2\\
&= \frac{1}{4} \sup_{z\in B(0,\frac{1}{\sqrt{2}})}\sum_{\gamma \in \Gamma_g} (1-|\gamma(z)|^2)^2   \text{ by } \eqref{cis} \\
&\leq \frac{1}{4} +\frac{1}{4} \sup_{z\in B(0,\frac{1}{\sqrt{2}})} \sum_{\gamma \ne e, \gamma \in \Gamma_g} (1-|\gamma(z)|^2)^2 \\
&< \frac{5}{16} \text{ by } \eqref{cis-5}. 
\end{align*}
The conclusion follows by taking a supremum.
\end{proof}

\begin{proposition}\label{ulbfps}
Let $X_g$ be sufficiently thick as above. Then
\[|\mu_g(0)|\geq \frac{3}{16}.\]
\end{proposition}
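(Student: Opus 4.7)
The plan is to isolate the contribution of the identity element in the Poincaré series defining $\mu_g(0)$ and bound the remaining terms using the estimate \eqref{cis-5} that was established in the proof of Proposition~\ref{uubfps}.

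First, I would split the series as
\[
\mu_g(0) = \frac{\overline{e'(0)^2}}{\rho(0)} + \sum_{\gamma \neq e,\ \gamma \in \Gamma_g} \frac{\overline{\gamma'(0)^2}}{\rho(0)},
\]
where $e$ is the identity element. Since $e'(0)=1$ and $\rho(0)=4$, the identity term equals exactly $\tfrac{1}{4}$. Hence by the reverse triangle inequality,
\[
|\mu_g(0)| \;\geq\; \frac{1}{4} \;-\; \Bigl|\sum_{\gamma \neq e} \frac{\overline{\gamma'(0)^2}}{\rho(0)}\Bigr|.
\]

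Next, I would bound the error term exactly as in the derivation of \eqref{eqn:ub-mu-1}: using the triangle inequality together with the identity $|\gamma'(z)|^2/\rho(z) = \tfrac{1}{4}(1-|\gamma(z)|^2)^2$ from \eqref{eqn:gamma-sum}, we get
\[
\Bigl|\sum_{\gamma \neq e} \frac{\overline{\gamma'(0)^2}}{\rho(0)}\Bigr|
\;\leq\; \frac{1}{4}\sum_{\gamma \neq e,\ \gamma \in \Gamma_g}(1-|\gamma(0)|^2)^2.
\]
Since $0 \in B(0,\tfrac{1}{\sqrt{2}})$, the estimate \eqref{cis-5} (valid under the sufficient-thickness hypothesis) applies and yields
\[
\sum_{\gamma \neq e,\ \gamma \in \Gamma_g}(1-|\gamma(0)|^2)^2 \;<\; \frac{1}{4},
\]
so the error term has modulus strictly less than $\tfrac{1}{16}$. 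Combining these gives $|\mu_g(0)| \geq \tfrac{1}{4} - \tfrac{1}{16} = \tfrac{3}{16}$, as desired.

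There is no real obstacle here: the work has already been done in \eqref{cis-5}, and the only new observation is that the identity element contributes a definite positive amount $\tfrac{1}{4}$ at $z=0$ which dominates the rest of the series. The role of sufficient thickness is precisely to make the non-identity tail small enough (via \eqref{cis-4-2}) that cancellation cannot wipe out this main term. This lower bound on $|\mu_g(0)|$, combined with the uniform upper bound on $\|\mu_g\|_\infty$ in Proposition~\ref{uubfps} and the $L^2$ estimate coming next, will feed into Proposition~\ref{ratio} to produce the pinched holomorphic sectional curvature in Theorem~\ref{ubfhsc}.
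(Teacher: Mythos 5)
Your proof is correct and is essentially identical to the paper's: both isolate the identity term $\frac{1}{4}$ at $z=0$, apply the reverse triangle inequality, and bound the non-identity tail by $\frac{1}{16}$ using \eqref{cis-5}. No further comment is needed.
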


\begin{proof}
By the triangle inequality we have 
\begin{align*}|\mu_g(0)| &\geq \frac{1}{4}-\sum_{\gamma \neq e \in \Gamma_g} \frac{|\gamma'(0)|^2}{\rho(0)} \\
& =\frac{1}{4}-\frac{1}{4}\sum_{\gamma \neq e \in \Gamma_g} (1-|\gamma(0)|^2)^2 \\
& > \frac{3}{16} \text{ by } \eqref{cis-5}.
\end{align*}
\end{proof} 

\begin{remark}
We have chosen our setting to be one of a single sufficiently thick surface.  However, we could also consider a sequence $\{X_g\}$ of surfaces, say indexed by genus, whose injectivity radii $\inj(X_g)$ is growing (without bound) with the genus $g$. In that case, the estimate \eqref{cis-4-1} becomes
$$
\sup_{z\in B(0,\frac{1}{\sqrt{2}})}\sum_{\gamma \neq e, \ \gamma \in \Gamma_g} (1-|\gamma(z)|^2)^2  \leq \frac{25}{\pi}(\pi-\Area(F_g))=o_g(1)
$$
so that the conclusions of the proofs of Propositions~\ref{uubf-0} and \ref{ulbfps} become
$$
|\mu_g(z)| \leq \frac{1}{4} + o_g(1) 
$$
and
$$
|\mu_g(0)| \ge \frac{1}{4} - o_g(1) 
$$
respectively.  We hope to take up the thread of the agreement of these asymptotic bounds in a subsequent work.
\end{remark}

\section{Holomorphic lines with uniform negatively pinched Weil-Petersson holomorphic sectional curvatures.} \label{sec:curv of hol lines}

We are now ready to prove Theorem~\ref{ubfhsc} from the introduction.  One slight notational refinement has occurred since that statement: the holomorphic section $\mu_X$ in the statement of Theorem~\ref{ubfhsc} is now more appropriately referred to as $\mu_g \in T_{X_g}\Teich(S_g)$, as defined as in equation (\ref{tps}). We continue to consider $X_g$ to be a sufficiently thick surface as in section~\ref{hmsc}.



\begin{proof}[Proof of Theorem \ref{ubfhsc}]
We wish to prove that the holomorphic sectional curvature $K(\mu_g)$ satisfies
\[ K(\mu_g) \leq  \frac{-81C_0}{6400\cdot \pi^2}<0\]
where $C_0=\frac{2}{3C(1)^2\cdot Vol_{\D}(B(0;1))}.$

Recall $z=x+\textbf{i} y$. Our plan is to bound $||\mu_g||^2_{WP} $ from above, $\sup_{z\in X_g}|\mu_g(z)|$ from below and then apply Proposition~\ref{ratio} to bound the holomorphic sectional curvature.  First,
\begin{align} \label{eqn:Beltrami L2 estimate}
||\mu_g||^2_{WP} &= \int_{F_g}\mu_g\cdot \overline{\mu_g}\rho(z)dxdy \\
&\leq \sup_{z\in F_g}|\mu_g(z)|\cdot \int_{F_g}|\overline{\mu_g}|\rho(z)dxdy  \notag \\ 
&\leq \frac{5}{16} \int_{F_g}| \overline{\mu_g}|\rho(z)dxdy \text{ by Proposition~\ref{uubfps}, since $X_g$ is sufficiently thick } \notag \\
&\leq  \frac{5}{16} \int_{F_g}{\sum_{\gamma \in \Gamma_g} |\gamma'(z)|^2}dxdy \text{ by } \eqref{tps} \notag \\
&\le \frac{5}{16} \int_{\D}{dxdy} \notag \\
&=\frac{5}{16}\pi.\notag
\end{align}

Second, by Proposition \ref{ulbfps}, again using that $X_g$ is sufficiently thick, 
\begin{eqnarray*} \label{eqn:Beltrami sup norm estimate}
\sup_{z\in X_g}|\mu_g(z)|=\sup_{z\in F_g}|\mu_g(z)|\geq |\mu_g(0)|\geq \frac{3}{16}.
\end{eqnarray*}
Finally, we recall from Proposition \ref{ratio} the bound
\begin{equation}\label{eqn:K29}
K(\mu_g)\leq - \frac{C_0\cdot \sup_{z\in X}|\mu_g(z)|^4}{||\mu_g||_{WP}^4}. 
\end{equation}
Thus substituting \eqref{eqn:Beltrami L2 estimate} and \eqref{eqn:Beltrami sup norm estimate} into the right hand side of \eqref{eqn:K29}, we find
$$
K(\mu_g) \leq  \frac{-81C_0}{6400\cdot \pi^2},
$$
as desired.
\end{proof}

Our final task is to prove Corollary~\ref{cor:bdd curv on seq}.

\begin{proof}[Proof of Corollary~\ref{cor:bdd curv on seq}]
We begin with the left hand side. For any $\sigma_g \in T_{X_g}\Teich(S_g)$, Proposition~\ref{lnubw} asserts that the normalized $L^{\infty}$ norm of the Beltrami differential $\sigma_g$ satisfies
\begin{equation*} 
\frac{\sup_{z\in X_g}|\sigma_g(z)|)^2}{\int_{X_g}|\sigma_g(z)|^2 dA(z)} \le C(\inj(X_g))
\end{equation*}
where $C(\inj(X_g))$ is the constant defined in \eqref{defn:C-inj}. We then substitute this estimate into the bound in Proposition~\ref{ratio} to obtain the required lower bound. If, as we did at the end of section~\ref{hmsc}, we considered the setting of a sequence $X_g$ of surfaces whose injectivity radii grow without bound, we could let $\inj(X_g) \to \infty$ so that $C(\inj(X_g)) \to \frac{3}{4\pi}$: we would then obtain the lower bound
$$
-\frac{2}{\pi}\leq K(\sigma_g).
$$
Since $\sigma_g$ is arbitrary in $T_{X_g}\Teich(S_g)$, the conclusion follows.

The upper bound is a direct consequence of the statement of the upper bound in Theorem~\ref{ubfhsc}, as that theorem only required sufficiently large thickness, which is immediately satisfied on our sequence $X_g$, once $g$ is sufficiently large.
\end{proof}

\begin{remark}
Fix two positive numbers $a$ and $b$. 
Let $H(\D)$ denote the holomorphic functions on the disk $\D$, and set

\[H_{a}^{b}(\D)=\{f(z) \in H(\D); \ \sup_{z\in \D}|f(z)|\leq b \ \ \ and \ \ \ |f(0)|\geq a\}.\]

Let $f(z)\in H_{a}^b(\D)$ and consider the image of the Theta-operator $\Theta(f)(z)\in H^0(X_g, K^2)$. Using the same argument in the proof of Theorem~\ref{ubfhsc}, we find

\begin{theorem}\label{soncs}
Let $X_g$ be a sequence of hyperbolic surfaces with $\inj(X_g)\to \infty$ as $g \to \infty$. Then there exists a uniform negative constant $B(a,b)$ such that for $g$ large enough, the Weil-Petersson holomorphic sectional curvature satisfies
\[ \frac{-2}{\pi}\leq K(\mu_g(f)(z)) \leq B(a,b)<0\]
where $\mu_g(f)(z)=\frac{\overline{\Theta(f)(z)}}{\rho(z)}$ and $f$ is arbitrary in $H_{a}^{b}(\D)$.
\end{theorem}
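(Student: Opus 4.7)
The proof of Theorem~\ref{soncs} is an adaptation of the argument for Theorem~\ref{ubfhsc}, now tracking the dependence of every estimate on the two parameters $a$ and $b$ controlling the function $f$. As before, Proposition~\ref{ratio} reduces the problem to bounding $||\mu_g(f)||_{WP}^2$ from above and $\sup_{X_g}|\mu_g(f)|$ from below by positive constants depending only on $a,b$, once $\inj(X_g)$ is sufficiently large. The lower bound $-2/\pi$ on $K(\mu_g(f))$ is inherited from the left-hand side of Proposition~\ref{ratio} together with Proposition~\ref{lnubw} in the limit $\inj(X_g)\to\infty$, exactly as in Corollary~\ref{cor:bdd curv on seq}; so the content of the theorem lies entirely in the upper bound $B(a,b)$.

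For the sup upper bound, the triangle inequality applied to the Poincar\'e series representation $\mu_g(f)(z)=\sum_{\gamma\in\Gamma_g}\overline{f(\gamma(z))\gamma'(z)^2}/\rho(z)$, combined with $|f|\leq b$, yields
\[|\mu_g(f)(z)|\leq b\cdot\sum_{\gamma\in\Gamma_g}\frac{|\gamma'(z)|^2}{\rho(z)}=\frac{b}{4}\sum_{\gamma\in\Gamma_g}(1-|\gamma(z)|^2)^2,\]
and the Ahlfors subharmonicity argument from Section~\ref{hmsc} (namely \eqref{cis}--\eqref{cis-5}, applied now with $\inj(X_g)\to\infty$ in place of fixed sufficient thickness) shows the rightmost sum is $1+o_g(1)$ uniformly on $\D$. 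For the sup lower bound, I would evaluate at $z=0$ and isolate the identity term:
\[|\mu_g(f)(0)|\geq\frac{|f(0)|}{4}-\frac{b}{4}\sum_{\gamma\neq e}(1-|\gamma(0)|^2)^2\geq\frac{a}{4}-\frac{b}{4}\cdot o_g(1),\]
which is at least $a/8$ for all $g$ large enough that the error term drops below $a/(2b)$. Finally, the unfolding computation from the proof of Theorem~\ref{ubfhsc} gives
\[||\mu_g(f)||_{WP}^2\leq\sup_{z}|\mu_g(f)(z)|\cdot\int_{F_g}\sum_{\gamma\in\Gamma_g}|f(\gamma(z))||\gamma'(z)|^2\,dxdy\leq\frac{b}{4}(1+o_g(1))\cdot\int_{\D}|f|\,d\xi d\eta\leq\frac{b^2\pi}{4}(1+o_g(1)).\]
Substituting these three estimates into Proposition~\ref{ratio} yields a uniform negative upper bound of the form $B(a,b)\asymp -C_0 a^4/(b^4\pi^2)$, which is the desired conclusion.

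The main obstacle is simply to verify that the threshold on ``$g$ large enough'' in the lower bound at the origin depends only on $a$ and $b$ (and on the rate of $\inj(X_g)\to\infty$), not on the particular choice of $f\in H_a^b(\D)$. This is automatic: the error $\sum_{\gamma\neq e}(1-|\gamma(0)|^2)^2$ depends only on the Fuchsian group $\Gamma_g$, and by \eqref{cis-4-1} is controlled by $(25/\pi)(\pi-\Area(F_g))$, a quantity tending to zero at a rate determined solely by the injectivity radius. The joint dependence of $B(a,b)$ on both parameters arises precisely from the balance between the main term (scaling with $a$) and the remainder (scaling with $b$) in the lower bound at $0$, and all other estimates depend on $f$ only through the sup norm $b$.
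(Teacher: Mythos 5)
Your proposal is correct and follows exactly the route the paper intends: the paper's own justification for Theorem~\ref{soncs} is the single remark that it follows ``using the same argument in the proof of Theorem~\ref{ubfhsc},'' and your write-up is precisely that argument with the $a,b$-dependence tracked — the factor $b$ propagated through the triangle inequality and the unfolding of the Poincar\'e series, the identity term at the origin bounded below by $a/4$, and Proposition~\ref{ratio} applied at the end. The resulting constant $B(a,b)\asymp -C_0a^4/(b^4\pi^2)$ and the observation that the threshold on $g$ depends only on $a/b$ and the injectivity radius are exactly what the paper's sketch requires.
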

\end{remark}

\begin{remark}
While our focus in this part of the paper was on the Beltrami differential $\mu_g$ defined in terms of the $\Theta$-operator applied to the constant function $f \equiv 1$, we might equally well  have applied the $\Theta$-operator to the functions $f_n(z)=(1-z^n) \ (n \geq 1)$ and obtained similar estimates since $f_n(z)\in H_{1}^{2}(\D)$ for all $n \geq 1$.  It is clear that the convex hull of $\{f_n(z)\}_{n \geq 1}$ is still contained in $H_{1}^{2}(\D)$, \textsl{It is then of interest to know the Weil-Petersson geometry of the image of this convex hull under the $\Theta$-operator.}
\end{remark}

\end{document}